\DeclareMathOperator{\Img}{Im}
\DeclareMathOperator{\Zen}{Z}
\DeclareFontFamily{U}{wncy}{}
\DeclareFontShape{U}{wncy}{m}{n}{<->wncyr10}{}
\DeclareSymbolFont{mcy}{U}{wncy}{m}{n}
\DeclareMathSymbol{\Sha}{\mathord}{mcy}{"58}
\DeclareMathSymbol{\sha}{\mathord}{mcy}{"78}
\begin{document}

\newtheorem{thm}{Theorem}[section]
\newtheorem{cor}[thm]{Corollary}
\newtheorem{lem}[thm]{Lemma}
\newtheorem{fact}[thm]{Fact}
\newtheorem{prop}[thm]{Proposition}
\newtheorem{defin}[thm]{Definition}
\newtheorem{exam}[thm]{Example}
\newtheorem{examples}[thm]{Examples}
\newtheorem{rem}[thm]{Remark}
\newtheorem{case}{\sl Case}
\newtheorem*{claim}{Claim}
\newtheorem{question}[thm]{Question}
\newtheorem{conj}[thm]{Conjecture}
\newtheorem*{notation}{Notation}
\swapnumbers
\newtheorem{rems}[thm]{Remarks}
\newtheorem*{acknowledgment}{Acknowledgments}
\newtheorem*{thmno}{Theorem}

\newtheorem{questions}[thm]{Questions}
\numberwithin{equation}{section}

\newcommand{\gr}{\mathrm{gr}}
\newcommand{\inv}{^{-1}}
\newcommand{\isom}{\cong}
\newcommand{\dbC}{\mathbb{C}}
\newcommand{\F}{\mathbb{F}}
\newcommand{\dbN}{\mathbb{N}}
\newcommand{\Q}{\mathbb{Q}}
\newcommand{\dbR}{\mathbb{R}}
\newcommand{\dbU}{\mathbb{U}}
\newcommand{\Z}{\mathbb{Z}}
\newcommand{\calG}{\mathcal{G}}
\newcommand{\calX}{\mathcal{X}}
\newcommand{\K}{\mathbb{K}}
\newcommand{\rmH}{\mathrm{H}}
\newcommand{\bfH}{\mathbf{H}}
\newcommand{\rmr}{\mathrm{r}}
\newcommand{\Span}{\mathrm{Span}}
\newcommand{\eue}{\mathbf{e}}

\newcommand{\bfLam}{\mathbf{\Lambda}}
\newcommand{\calY}{\mathcal{Y}}
\newcommand{\calV}{\mathcal{V}}
\newcommand{\calE}{\mathcal{E}}
\newcommand{\calW}{\mathcal{W}}

%%%%%%%%%%%%%%%%%%%%%%%%%%%%%%%%%%

\title[Digraphs, Frattini-resistance and pro-$p$ Galois groups]{Directed graphs, Frattini-resistance, \\ and maximal pro-$p$ Galois groups}

\author{Claudio Quadrelli}
\address{Department of Science \& High-Tech, University of Insubria, Como, Italy EU}
\email{claudio.quadrelli@uninsubria.it}
\date{\today}
%\thanks{W la pastasciutta!}
%\dedicatory{To Pablo Spiga, an enthusiast algebraist \\ and a ``graphomaniac'', with admiration.}

\begin{abstract}
Let $p$ be a prime.
Following Snopce-Tanushevski, a pro-$p$ group $G$ is called Frattini-resistant if the function
$H\mapsto\Phi(H)$, from the poset of all closed topologically finitely generated subgroups of $G$ into itself, is a poset embedding.
We prove that for an oriented right-angled Artin pro-$p$ group (oriented pro-$p$ RAAG) $G$ associated to a finite directed graph the following four conditions are equivalent: the associated digraph is of elementary type; $G$ is Frattini-resistant; every topologically finitely generated closed subgroup of $G$ is an oriented pro-$p$ RAAG; $G$ is the maximal pro-$p$ Galois group of a field containing a root of 1 of order $p$.
Also, we conjecture that in the $\Z/p$-cohomology of a Frattini-resistant pro-$p$ group there are no essential triple Massey products.
\end{abstract}

\subjclass[2020]{Primary 20E18; Secondary 20J06, 05C25, 12G05, 20E07}

\keywords{Oriented right-angled Artin pro-$p$ groups, Frattini-resistant pro-$p$ groups, maximal pro-$p$ Galois groups, Massey products}

\maketitle
%%%%%%%%%%%%%%%%%%%%%%%%%%%%%%%%%%%%%%%%%%%%%%%%%

\section{Introduction}
\label{sec:intro}

Let $p$ be a prime number.
In the paper \cite{ilirslobo:fratres}, Ilir~Snopce and Slobodan~Tanushevski introduced the notion of {\sl Frattini-resistant pro-$p$ group}.
Given a pro-$p$ group $G$, let $\Phi(G)$ denote the Frattini subgroup of $G$.
Then $G$ is said to be Frattini-resistant if the following condition is satisfied: for every pair of topologically finitely generated closed subgroups $H_1$ and $H_2$ of $G$, one has
\[
 \Phi(H_1)\subseteq \Phi(H_2) \qquad\text{if, and only if,}\qquad H_1\subseteq H_2
\]
(cf. \cite[\S~4]{ilirslobo:fratres}).
Besides being interesting on its own, this property is particularly relevant in Galois theory, as the {\sl maximal pro-$p$ Galois group} of a field containing a root of 1 of order $p$ is Frattini-resistant --- as shown by Snopce-Tanushevski (cf. \cite[\S~7]{ilirslobo:fratres}), and as we will recall below.

Our aim is to study Frattini-resistance for {\sl oriented right-angled Artin pro-$p$ groups} associated to {\sl directed graphs}.
By a {\sl directed graph} (or {\sl digraph}, for short) $\Gamma$ we mean a pair of sets $\Gamma=(\calV,\calE)$ --- we tacitly assume that $\calV\cap\calE=\varnothing$, and that $\calV$ is finite --- where $\calV$ is the set of {\sl vertices} of $\Gamma$, and $\calE$ is the set of {\sl edges} of $\Gamma$, with
$$\calE\subseteq \calV\times\calV\smallsetminus\{\:(v,v)\:\mid\:v\in\calV\:\}$$
(see, e.g., \cite[\S~1.10]{diestel}).
Given a pair of vertices $v,w\in\calV$, if $(w,v)\in\calE$ but $(v,w)\notin \calE$ the edge $(w,v)$ is said  a {special edge}, while if both
$(w,v),(v,w)$ lie in $\calE$, we say that $(w,v)$ and $(v,w)$ are {ordinary edges}.
A vertex which is the second coordinate of a special edge is said to be {\sl special}.
For example, the diagrams
\begin{equation}\label{eq:exampleintro}
 \xymatrix@R=2pt{ v_1 && v_2\\ \bullet && \circ\ar[ll]\ar@/^/[dddd]
 \\ \\ \\ \\
 \circ\ar[uuuu]\ar@/_/[rr] & & \circ\ar[uuuull]\ar@/^/[uuuu]\ar@/_/[ll]  \\ v_4 && v_3}
 \qquad\qquad\text{and}\qquad\qquad
 \xymatrix@R=2pt{ v_1 && v_2\\\bullet && \circ\ar[ll]\ar@{-}[dddd]
 \\ \\ \\ \\
 \circ\ar[uuuu]\ar@{-}[rr]  && \circ\ar[uuuull] \\ v_4 && v_3}
 \end{equation}
represent the same digraph with four vertices: in the first diagram every edge is represented as an arrow going from the first coordinate to the second, while in the second diagram we identify the ordinary edges joining the same pair of vertices, and we represent them as a single ``headless'' edge, so that only the special edges are represented as arrows.
Moreover, the special vertex is the black one.

The {\sl oriented right-angled Artin pro-$p$ group} (oriented pro-$p$ RAAG for short) associated to a digraph $\Gamma=(\calV,\calE)$ and to a $p$-power $q=p^f$ with $f\geq1$ --- and $f\geq 2$ if $p=2$ --- is the pro-$p$ group with pro-$p$ presentation
\[
 G=\left\langle\:v\in\calV\:\mid\: wuw^{-1}=\begin{cases}
                                             u^{1+q}& \text{if }(w,u)\text{ is special}, \\
                                             u& \text{if }(w,u)\text{ is ordinary},
        \end{cases}\;\forall\; (u,w)\in\calE \:\right\rangle.
\]
%Observe that if a digraph $\Gamma$ has no special edges, then $G$ is the pro-$p$ RAAG (namely, the pro-$p$ completion of the discrete RAAG) associated to $\Gamma$ seen as a graph without orientation (and clearly it does not depend on the choice of $q$).

The family of oriented pro-$p$ RAAGs is very rich: inside it, one may find all finitely generated free --- and free abelian --- pro-$p$ groups, pro-$p$ completions of discrete RAAGs, certain families of $p$-adic analytic pro-$p$ groups and even some finite $p$-groups (see, e.g., \cite[\S~1]{BQW}).
Recently, oriented pro-$p$ RAAGs have been object of study, especially from a Galois-theoretic perspective (see, e.g., \cite{BCQ,BQW,CasQua,qsv:quadratic,sz:raags}).
Our main goal is to characterize the oriented pro-$p$ RAAGs which are Frattini-resistant, in terms of the associated digraph.

\begin{thm}\label{thm:main intro}
 Let $\Gamma=(\calV,\calE)$ be a digraph, and for $q=p^f$ {\rm(}with $f\geq2$ if $p=2${\rm)} let $G$ be the oriented pro-$p$ RAAG associated to $\Gamma$ and to $q$.
 The following are equivalent.
 \begin{itemize}
  \item[(i)] $\Gamma$ is a digraph of elementary type.
  \item[(ii)] $G$ is Frattini-resistant.
 \end{itemize}
\end{thm}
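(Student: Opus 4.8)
The plan is to treat the combinatorial notion of an elementary-type digraph as the inductive one: a single vertex is of elementary type, and the class is closed under disjoint union (corresponding to free pro-$p$ products on the group side) and under the cyclotomic-type extension operation(s) (a dominating special vertex, corresponding to a semidirect product $G'\rtimes\Z_p$). Three structural facts will be used repeatedly. First, the Frattini subgroup $\Phi(K)=\overline{K^p[K,K]}$ of a pro-$p$ group $K$ is intrinsic to $K$, so Frattini-resistance is inherited by every closed subgroup. Second, for an induced sub-digraph $\Delta\subseteq\Gamma$ the oriented pro-$p$ RAAG $G_\Delta$ is a retract of $G$ (kill the vertices outside $\Delta$), hence embeds as a topologically finitely generated closed subgroup of $G$ --- so any obstruction to Frattini-resistance sitting in some $G_\Delta$ is an obstruction in $G$. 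Third, a Frattini-resistant pro-$p$ group is torsion-free: if $x^p=1$ with $x\neq1$ then, taking $H_1=\langle x\rangle$ and $H_2=\{1\}$, one has $\Phi(H_1)=\{1\}\subseteq\{1\}=\Phi(H_2)$ while $H_1\not\subseteq H_2$.

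For (i)$\Rightarrow$(ii) I would induct on the elementary-type construction of $\Gamma$, in fact proving simultaneously the a priori stronger statement that every topologically finitely generated closed subgroup of $G$ is again an oriented pro-$p$ RAAG over an elementary-type digraph (this is condition (iii) of the abstract, and it is what makes the induction close). The base case $G=\Z_p$ is the observation $p^m\Z_p\subseteq p^n\Z_p \iff m\geq n \iff p^{m+1}\Z_p\subseteq p^{n+1}\Z_p$. For the disjoint-union step $G=G_1\ast G_2$ one applies the pro-$p$ Kurosh subgroup theorem: a topologically finitely generated closed $H\leq G$ is a free pro-$p$ product of a free pro-$p$ group with finitely many conjugate-intersections $H\cap{}^{g}G_i$; reading $\Phi(H)$ off this decomposition and applying the inductive hypothesis to the factors yields the poset-embedding property for $H_1,H_2\leq G$. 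For the cyclotomic step $G=G'\rtimes\Z_p$, a topologically finitely generated closed subgroup either lies in a conjugate of $G'$ --- handled by induction --- or surjects onto an open subgroup of the top $\Z_p$, in which case it is itself a cyclotomic extension of its intersection with $G'$, and the comparison of Frattini subgroups follows because the action sends each generator $x$ to $x^{1+q}$, i.e.\ acts trivially modulo $\Phi$.

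For (ii)$\Rightarrow$(i) I would argue by contraposition: assuming $\Gamma$ is not of elementary type, produce a topologically finitely generated closed subgroup of $G$ that is not Frattini-resistant. The combinatorial input is a classification of the non-elementary-type digraphs by forbidden induced sub-digraphs: such a digraph either carries a ``bad'' orientation feature (e.g.\ a directed cycle of special edges, which forces torsion in the corresponding $G_\Delta$ and so contradicts Frattini-resistance by the torsion-free fact above) or contains, as an induced sub-digraph, one of the directed analogues of the long induced cycle $C_n$ ($n\geq4$) that obstruct chordality of the underlying graph. By the retract fact it then suffices to show each such obstruction digraph $\Delta$ has $G_\Delta$ non-Frattini-resistant; modelled on the computation of Snopce--Tanushevski for the undirected square, one writes down explicit $H_1\not\subseteq H_2$ with $\Phi(H_1)\subseteq\Phi(H_2)$ --- typically $H_2$ a standard parabolic subgroup and $H_1$ generated by a vertex together with a suitable conjugate of it, so that $\Phi(H_1)$ is forced into $\Phi(H_2)$ for degree reasons while $H_1$ escapes $H_2$.

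The main obstacle is the (ii)$\Rightarrow$(i) direction: one must first nail down the exact list of obstruction sub-digraphs --- checking in particular that the orientation genuinely enlarges the undirected list --- and then, uniformly in that list (in particular uniformly in the length of a long directed cycle), exhibit the witnessing pair $H_1,H_2$; finding such a pair for the generic long cycle is the real work. A secondary difficulty lies in the cyclotomic step of (i)$\Rightarrow$(ii), where controlling the closed subgroups of $G'\rtimes\Z_p$ and their Frattini subgroups requires an HNN/fibre-product analysis layered on top of the inductive hypothesis.
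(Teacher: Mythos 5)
Your overall architecture (forward direction by structure, backward direction by a finite list of forbidden induced subdigraphs with explicit witnesses $H_1\not\subseteq H_2$, $\Phi(H_1)\subseteq\Phi(H_2)$) matches the paper's, but three of your load-bearing steps are wrong or missing. First, the ``retract'' fact fails: killing the vertices outside an induced subdigraph $\Delta$ does not define a map $G\to G_\Delta$ when a special edge crosses into $\Delta$ (setting the acting vertex to $1$ forces $u^q=1$), and the natural map $G_\Delta\to G$ need not be injective when $\Gamma$ is not special --- the directed triangle of Example~\ref{ex:Mennike} gives a \emph{finite} $G_\Delta$ with $\langle v\rangle\simeq\Z_p$ not a subgroup of it. The paper only ever uses the homomorphism $G_\Delta\to G$ to transport torsion of a \emph{generator} (which survives because generators are nontrivial mod $\Phi(G)$), and otherwise works directly with the subgroup of $G$ generated by the relevant vertices, supported by Lemma~\ref{lemma:specor RAAG torfree} and a pro-$p$--tree argument. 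Second, your obstruction list is not the right one. The actual list (Propositions~\ref{prop:ET graphs no subgraphs} and the seven non-special $3$-vertex configurations of \S~\ref{sec:nonspecial}) consists of: torsion triangles; torsion-free non-special triangles and line-graphs, which do \emph{not} reduce to torsion and need genuine Frattini computations (Propositions~\ref{prop:nonspecial triangles}, \ref{prop:nonspecial lambda}); the digraph \eqref{eq:Lambdas} ($z\to x\leftarrow y$ with $x$ special), a purely oriented obstruction with no undirected analogue, which is the hardest case in the paper (Proposition~\ref{prop:lambdas}); and the square and the path on four vertices. Long cycles $C_n$, $n\geq5$, contain an induced $P_4$, so your worry about uniformity in $n$ is moot --- but you have not identified the cases that actually occur.

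Third, and most seriously, your cyclotomic step in (i)$\Rightarrow$(ii) proves a false statement as justified: ``$G'\rtimes\Z_p$ with the $\Z_p$-generator sending each generator $x$ to $x^{1+q}$, hence acting trivially mod $\Phi$'' is an exact description of the group $F(y,z)\rtimes\langle x\rangle$ attached to \eqref{eq:Lambdas}, and Proposition~\ref{prop:lambdas} shows this group is \emph{not} Frattini-resistant (one exhibits $K$ with $z^{q'}\in\Phi(K)$ but $z^{q'p^{-1}}\notin K$). The distinction between an elementary-type cone and this configuration is precisely what your argument must detect, and ``trivial action mod $\Phi$'' does not detect it. The paper sidesteps your entire forward induction by quoting Theorem~\ref{thm:BQW} (elementary type $\Leftrightarrow$ completes into a $1$-cyclotomic oriented pro-$p$ group) together with Proposition~\ref{prop:1cyc fratres} (Snopce--Tanushevski: $1$-cyclotomic torsion-free $\Rightarrow$ Frattini-resistant); if you insist on a self-contained induction you would in effect be re-proving both of those results, and the Kurosh and semidirect-product steps would each need substantially more care than your sketch provides.
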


A digraph $\Gamma=(\calV,\calE)$ is said to be {\sl of elementary type} if it satisfies the following conditions: every edge joining a special vertex and another vertex is an arrow pointing at the special vertex --- roughly speaking, every special vertex is a ``sinkhole'' --- (e.g., the only special vertex, $v_1$, in the digraph \eqref{eq:exampleintro}, is a sinkhole); and for every induced subgraph $\Gamma'$ of $\Gamma$, either
\begin{itemize}
 \item[(a)] $\Gamma'$ has at least two distinct connected components,
 \item[(b)] or $\Gamma'$ has an ordinary vertex which is adjacent to all other vertices of $\Gamma'$.
\end{itemize}
As an example, the digraph represented in \eqref{eq:exampleintro} is not of elementary type, as the induced subgraph with vertices $v_1,v_2,v_4$ satisfies none of the conditions (a)--(b) above.
Also, consider the two digraphs with geometric representations
\begin{equation}\label{eq:examples graphs square}
 \xymatrix@R=1.5pt{v_1&v_2 \\ \bullet & \circ\ar[l]\ar[ddd]  \\ \\ \\
 \circ\ar[uuu]\ar[r]\ar@{-}[uuur]  & \bullet\\ v_4&v_3}
 \qquad\qquad\text{and}\qquad\qquad
 \xymatrix@R=1.5pt{v_1&v_2 \\ \bullet\ar@{-}[r] & \circ\ar@{-}[ddd]  \\ \\ \\
 \circ\ar[uuu]\ar@{-}[r]\ar@{-}[uuur]  & \circ\\ v_4&v_3}
 \end{equation}
The first digraph is of elementary type, while the second is not as the special vertex $v_1$ is not a sinkhole.
Digraphs of elementary type may be constructed starting from single vertices as ``bricks'', and performing elementary operations, namely disjoint unions and ``coning'' (see \cite[\S~2.4]{BQW} and \S~\ref{ssec:et graph} below).

From Theorem~\ref{thm:main intro} we may deduce some interesting corollaries.
The first one may be seen as the translation, in terms of Frattini-resistance, of the ``oriented pro-$p$ version'' of a famous result of C.~Droms on discrete RAAGs (see \cite{droms}).

\begin{cor}\label{cor:droms}
 Let $\Gamma=(\calV,\calE)$ be a digraph, and for $q=p^f$ {\rm(}with $f\geq2$ if $p=2${\rm)} let $G$ be the oriented pro-$p$ RAAG associated to $\Gamma$ and to $q$.
 The following are equivalent.
 \begin{itemize}
  \item[(i)] $G$ is Frattini-resistant.
  \item[(ii)] Every topologically finitely generated closed subgroup of $G$ occurs as an oriented pro-$p$ RAAG associated to some digraph.
 \end{itemize}
\end{cor}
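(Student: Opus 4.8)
The plan is to deduce the statement from Theorem~\ref{thm:main intro}. By that theorem, condition~(i) is equivalent to $\Gamma$ being a digraph of elementary type, so the Corollary reduces to the equivalence ``$\Gamma$ is of elementary type $\iff$ every topologically finitely generated closed subgroup of $G$ is an oriented pro-$p$ RAAG''. This equivalence is precisely the ``oriented pro-$p$'' incarnation of Droms' theorem alluded to in the introduction, and part of it is likely already recorded in \cite{BQW}; I sketch below how I would prove it. I use freely the standard fact that for an induced subgraph $\Gamma'$ of $\Gamma$ the oriented pro-$p$ RAAG $G_{\Gamma'}$ embeds in $G$ as a topologically finitely generated closed subgroup, so that condition~(ii) automatically descends to $G_{\Gamma'}$.

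For the implication ``$\Gamma$ of elementary type $\Rightarrow$ (ii)'' I would induct on the construction of $\Gamma$ from single vertices by disjoint unions and conings (see \S\ref{ssec:et graph}). The base $\Gamma=\{v\}$ gives $G\cong\Z_p$. If $\Gamma=\Gamma_1\sqcup\Gamma_2$, then $G=G_{\Gamma_1}\amalg G_{\Gamma_2}$, and the pro-$p$ Kurosh subgroup theorem writes each topologically finitely generated closed subgroup of $G$ as a free pro-$p$ product of a free pro-$p$ group with conjugates of topologically finitely generated closed subgroups of $G_{\Gamma_1}$ and $G_{\Gamma_2}$; by the inductive hypothesis these are oriented pro-$p$ RAAGs (a free pro-$p$ group being attached to an edgeless digraph), and a free pro-$p$ product of oriented pro-$p$ RAAGs is the one attached to the disjoint union of the corresponding digraphs. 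If $\Gamma$ is obtained by coning, then $G$ is an extension $1\to\Z_p\to G\to G_{\Gamma'}\to1$ of one of the types occurring in \S\ref{ssec:et graph} (a direct product, or a semidirect product on which $G_{\Gamma'}$ acts through a homomorphism into $1+q\Z_p$); for a topologically finitely generated closed subgroup $H\le G$ I would set $H_0=H\cap\Z_p$ (which is $1$ or isomorphic to $\Z_p$), note that $H/H_0$ embeds as a topologically finitely generated closed subgroup of $G_{\Gamma'}$, hence is an oriented pro-$p$ RAAG by induction, and then check that the extension of $H/H_0$ by $H_0$ realised inside $G$ --- possibly the ``graph'' of a homomorphism $H/H_0\to\Z_p/H_0$ --- is again an oriented pro-$p$ RAAG, allowing the $p$-power in the presentation to change. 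This fibre-product bookkeeping, where the special vertices make the step delicate, is the subtle part of this direction.

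For the converse ``(ii)~$\Rightarrow$ $\Gamma$ of elementary type'' I would argue by contradiction and by strong induction on $|\calV|$. Assuming (ii) but $\Gamma$ not of elementary type, the descent of (ii) to the $G_{\Gamma'}$ together with the inductive hypothesis forces no \emph{proper} induced subgraph of $\Gamma$ to fail to be of elementary type, so $\Gamma$ is \emph{minimally} non-elementary-type. Such digraphs are easily listed: those without special edges are exactly the $4$-cycle and the path on $4$ vertices (every longer chordless cycle contains an induced path on $4$ vertices), and the rest form a short explicit list of digraphs on at most three vertices, such as $u\to v\to w$ and $u\to v\leftarrow w$. For the two ordinary cases, $G$ is the pro-$p$ completion of an incoherent right-angled Artin group, and I would produce inside $G$ a topologically finitely generated closed subgroup that is not of type $\mathrm{FP}_2$ over $\Z/p$ --- a pro-$p$ Bieri--Stallings/Bestvina--Brady kernel, whose $\rmH^2(-,\Z/p)$ is infinite-dimensional --- which cannot be an oriented pro-$p$ RAAG, since those are finitely presented with quadratic $\Z/p$-cohomology; this contradicts~(ii). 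For each of the finitely many remaining digraphs I would instead carry out a direct computation exhibiting a topologically finitely generated closed subgroup that is not an oriented pro-$p$ RAAG.

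The step I expect to be the main obstacle is the key case of the converse: producing, directly in the \emph{pro-$p$} category, a topologically finitely generated closed subgroup of $G$ with infinite-dimensional $\rmH^2(-,\Z/p)$ when $\Gamma$ is the $4$-cycle or the path on four vertices, i.e.\ transferring the classical incoherence of these right-angled Artin groups to their pro-$p$ completions; together with the fibre-product verification in the coning step of the forward implication.
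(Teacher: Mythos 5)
Your proposal follows essentially the same route as the paper: Corollary~\ref{cor:droms} is obtained there in one line by combining Theorem~\ref{thm:main intro} with the equivalence (i)$\Leftrightarrow$(ii) of Theorem~\ref{thm:BQW}, which is exactly the ``oriented Droms theorem'' from \cite{BQW} that you correctly guess is already on record, so no independent proof of that equivalence is required. Your appended sketch of the \cite{BQW} result is therefore superfluous (and would need repair as written --- e.g.\ coning yields $G_{\Gamma'}\rtimes\Z_p$ with $G_{\Gamma'}$ normal, not an extension of $G_{\Gamma'}$ by a normal $\Z_p$, and the pro-$p$ incoherence/\textrm{FP}$_2$ argument for the square and the length-three path is not established in the pro-$p$ category, whereas \cite{BQW} and \S 6 of this paper handle those cases by elementary explicit computations), but since the paper itself simply cites \cite{BQW}, your deduction of the corollary stands.
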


The proof of Corollary~\ref{cor:droms} relies on the characterization of oriented pro-$p$ RAAGs satisfying ``Drom's condition'' (ii) proved by S.~Blumer, Th.S.~Weigel and the author in \cite{BQW}.

The second corollary we obtain from Theorem~\ref{thm:main intro} involves the realizability of an oriented pro-$p$ RAAG as the {\sl maximal pro-$p$ Galois group} of a field.
Given a field $\K$, its maximal pro-$p$ Galois group $G_{\K}(p)$ is the Galois group of the maximal pro-$p$ Galois extension of $\K$ --- or, equivalently, the maximal pro-$p$ quotient of the absolute Galois group of $\K$.
Characterizing those pro-$p$ groups which occur as the maximal pro-$p$ Galois group of a filed is one of the main open problems in Galois theory (see, e.g., \cites{BLMS,BQW,cq:noGal,cq:chase}).

Recall that a pro-$p$ group $G$ is said to complete into a {\sl 1-cyclotomic oriented pro-$p$ group} if there exists a continuous $G$-module $M$, isomorphic to $\Z_p$ as an abelian pro-$p$ group, such that for every closed subgroup $H$ of $G$, the natural cohomology maps
\begin{equation}\label{eq:1cyc}
 \rmH^1(H,M/p^nM)\longrightarrow \rmH^1(H,M/pM),
\end{equation}
induced by the epimorphism of continuous $H$-modules $M/p^nM\twoheadrightarrow M/pM$, is surjective for every $n\geq1$ --- here we consider the continuous $G$-modules $M/p^nM$ as continuous $H$-modules in the obvious way ---, see, e.g., \cite[\S~1]{cq:chase}.
It is well-known that the maximal pro-$p$ Galois group of a field containing a root of 1 of order $p$, together with the {\sl pro-$p$ cyclotomic character}, completes into a 1-cyclotomic oriented pro-$p$ group (see Example~\ref{exam:GK} below).

By \cite[Thm.~1.1]{BQW}, an oriented pro-$p$ RAAG completes into a 1-cyclotomic oriented pro-$p$ group if, and only if, the associated digraph is of elementary type.
Also, Snopce-Tanushevski proved that a pro-$p$ group which completes into a 1-cyclotomic oriented pro-$p$ group is Frattini-resistant (see \cite[Thm.~1.11]{ilirslobo:fratres}).
Hence, combining these two results with Theorem~\ref{thm:main intro} yields the following.

\begin{cor}\label{cor:1cyc}
 Let $\Gamma=(\calV,\calE)$ be a digraph, and for $q=p^f$ {\rm(}with $f\geq2$ if $p=2${\rm)} let $G$ be the oriented pro-$p$ RAAG associated to $\Gamma$ and to $q$.
 The following are equivalent.
 \begin{itemize}
  \item[(i)] $G$ is Frattini-resistant.
  \item[(ii)] $G$ completes into a 1-cyclotomic oriented pro-$p$ group.
  \item[(iii)] $G$ occurs as the maximal pro-$p$ Galois group of a field containing a root of $1$ of order $p$.
 \end{itemize}
\end{cor}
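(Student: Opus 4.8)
The plan is to close the cycle among (i), (ii) and (iii) by feeding Theorem~\ref{thm:main intro} into the three ingredients recalled just above: \cite[Thm.~1.1]{BQW}, which ties elementary-type digraphs to $1$-cyclotomicity of the associated oriented pro-$p$ RAAG (and, in its full form, to its realizability as a maximal pro-$p$ Galois group); \cite[Thm.~1.11]{ilirslobo:fratres}, that a pro-$p$ group completing into a $1$-cyclotomic oriented pro-$p$ group is Frattini-resistant; and Example~\ref{exam:GK}, that the maximal pro-$p$ Galois group of a field containing a root of $1$ of order $p$, together with the pro-$p$ cyclotomic character, completes into a $1$-cyclotomic oriented pro-$p$ group.

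First I would record (i)$\Leftrightarrow$(ii): by Theorem~\ref{thm:main intro}, $G$ is Frattini-resistant exactly when $\Gamma$ is of elementary type, and by \cite[Thm.~1.1]{BQW} the latter holds exactly when $G$ completes into a $1$-cyclotomic oriented pro-$p$ group. (Independently, (ii)$\Rightarrow$(i) is a special case of \cite[Thm.~1.11]{ilirslobo:fratres}, a useful internal check.) Next, (iii)$\Rightarrow$(ii) is immediate from Example~\ref{exam:GK}; hence also (iii)$\Rightarrow$(i).

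What remains is (i)$\Rightarrow$(iii), equivalently (ii)$\Rightarrow$(iii) --- the only implication that genuinely asks one to build a field rather than to combine quoted results. By Theorem~\ref{thm:main intro} we may assume $\Gamma$ is of elementary type, and then invoke that elementary-type oriented pro-$p$ RAAGs occur as maximal pro-$p$ Galois groups of fields containing a root of $1$ of order $p$ --- which I expect to be part of the full statement of \cite[Thm.~1.1]{BQW}, or to follow from the known realizability of pro-$p$ groups of elementary type once $G$ is recognized as such a group (\S~\ref{ssec:et graph}, following \cite[\S~2.4]{BQW}). A self-contained argument would induct on a construction of $\Gamma$ from single vertices by disjoint unions and conings: the single-vertex RAAG $\Z_p$ is realized over, e.g., $F((t))$ with $F$ algebraically closed of characteristic $\neq p$ (whose cyclotomic character is trivial); a disjoint union of digraphs corresponds to a free pro-$p$ product of the associated RAAGs, still a maximal pro-$p$ Galois group of a field containing $\mu_p$; and a coning corresponds to a Henselian-valued (e.g.\ Laurent-series) extension of the base field, replacing $G_\K(p)$ by its extension by an inertia copy of $\Z_p$ on which the old group acts through the cyclotomic character.

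The main obstacle I foresee is the bookkeeping in this last point: at each stage one must arrange that the field realizing the current oriented pro-$p$ RAAG has pro-$p$ cyclotomic character equal to the canonical orientation of that RAAG --- trivial on ordinary generators, equal to $1+q$ on special ones --- so that the next coning reproduces exactly the defining relations of the larger RAAG, with the prescribed exponents, and not those of some oriented pro-$p$ group with a different orientation; propagating this invariant coherently along the induction, and in particular through the free-product steps, is the technical heart. Once it is in place each step is a standard Galois-theoretic construction, and the three implications together yield the equivalence of (i), (ii) and (iii).
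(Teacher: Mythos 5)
Your argument is correct and takes essentially the same route as the paper: the corollary is obtained by combining Theorem~\ref{thm:main intro} with the equivalence (i)$\Leftrightarrow$(iii)$\Leftrightarrow$(iv) of Theorem~\ref{thm:BQW} (the paper's restatement of \cite[Thm.~1.1]{BQW}), whose item (iv) already contains the Galois realizability of elementary-type oriented pro-$p$ RAAGs that you were unsure about. Your sketched inductive field construction is therefore unnecessary --- that work is done in \cite{BQW} --- and the only implication you derive independently, (iii)$\Rightarrow$(ii) via Example~\ref{exam:GK}, is a harmless internal consistency check.
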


Corollary~\ref{cor:1cyc} suggests that Frattini-resistance is a rather restrictive property for pro-$p$ groups, and it may provide a powerful tool to study maximal pro-$p$ Galois groups of fields.
In particular, we believe that further investigations in this direction will lead to the discovery of new obstructions for the realization of pro-$p$ groups as maximal pro-p Galois group.

Another cohomological property of maximal pro-$p$ Galois groups which has been thoroughly studied in recent years is the presence of {\sl essential Massey products} in the $\Z/p$-cohomology of these pro-$p$ groups.
Massey products in the $\Z/p$-cohomology of a pro-$p$ group $G$ are multi-valued maps which associate a sequence of elements of $\rmH^1(G,\Z/p)$ to a subset of $\rmH^2(G,\Z/p)$, and they generalize the cup-product:
for an overview on Massey products in the $\Z/p$-cohomology of maximal pro-$p$ Galois groups see, e.g., \cite{MT:kernel,MT:massey} and references therein.
E.~Matzri proved that if $\K$ is a field containing a root of 1 of order $p$, then there are no essential triple Massey products in the $\Z/p$-cohomology of $G_{\K}(p)$ --- see \cite{eli:Massey,EM:Massey} ---, i.e., whenever the subset of $\rmH^2(G_{\K}(p),\Z/p)$ which is the value of a Massey product of a sequence of length 3 of elements of $\rmH^1(G_{\K}(p),\Z/p)$ is not empty, it contains 0.
Employing a result of W.~Dwyer, it is possible to translate this property into purely group-theoretic terms, see \S~\ref{ssec:massey unip} below.
This property was used to produce new examples of pro-$p$ groups that do not occur as maximal pro-$p$ Galois groups of fields containing a root of 1 of order $p$ (see \cite[\S~7]{MT:massey}).

We suspect that Frattini-resistance is strictly stronger than this cohomological property.

\begin{conj}\label{conj:massey}
Let $G$ be a Frattini-resistant pro-$p$ group.
Then there are no essential triple Massey products in the $\Z/p$-cohomology of $G$.
\end{conj}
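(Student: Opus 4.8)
The plan to attack this conjecture is to pass from Massey products to unipotent representations via Dwyer's theorem, and then to try to solve the resulting lifting problem by exploiting the constraints that Frattini-resistance imposes on the subgroup lattice of $G$. Recall (see \S~\ref{ssec:massey unip}) that, by Dwyer's theorem, $\rmH^\bullet(G,\Z/p)$ contains no essential triple Massey products if and only if: for every continuous homomorphism $\bar\rho\colon G\to\bar U_4(\Z/p)$, with $(i,i+1)$-entries $\chi_1,\chi_2,\chi_3\in\rmH^1(G,\Z/p)$, there is a continuous homomorphism $G\to U_4(\Z/p)$ with the same $(i,i+1)$-entries, where $U_4(\Z/p)$ is the group of upper unitriangular $4\times4$ matrices over $\Z/p$ and $\bar U_4(\Z/p)$ is its quotient by the central subgroup $\Zen(U_4(\Z/p))\cong\Z/p$ of matrices supported in position $(1,4)$. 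Two elementary facts are worth recording first. Frattini-resistance passes to closed subgroups — for $H_1,H_2$ closed and topologically finitely generated inside a closed subgroup $N\leq G$ one has $\Phi(H_i)=\overline{H_i^{\,p}[H_i,H_i]}$ irrespective of the ambient group — so any induction run along closed subgroups stays within the class. And the conjecture already holds for oriented pro-$p$ RAAGs: by Theorem~\ref{thm:main intro} and Corollary~\ref{cor:1cyc}, a Frattini-resistant oriented pro-$p$ RAAG is the maximal pro-$p$ Galois group of a field containing a root of $1$ of order $p$, whence it has no essential triple Massey products by Matzri's theorem (cf.\ \cite{eli:Massey,EM:Massey}). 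Since it is expected that not every Frattini-resistant pro-$p$ group arises as a maximal pro-$p$ Galois group, one cannot simply quote Matzri's theorem in general; but this ``test case'' does suggest imitating the field-theoretic proofs, with the poset-embedding property playing the role of Kummer theory and of the norm/transfer arguments.

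So let $\bar\rho\colon G\to\bar U_4(\Z/p)$ be given, with $(i,i+1)$-entries $\chi_1,\chi_2,\chi_3\in\rmH^1(G,\Z/p)=\Hom(G,\Z/p)$. The existence of $\bar\rho$ forces $\chi_1\cup\chi_2=0=\chi_2\cup\chi_3$ in $\rmH^2(G,\Z/p)$, and a set-theoretic lift of $\bar\rho$ to $U_4(\Z/p)$ yields a $2$-cocycle whose class $\omega\in\rmH^2(G,\Z/p)$ is the obstruction to lifting and represents an element of $\langle\chi_1,\chi_2,\chi_3\rangle$; one must show that, after modifying $\bar\rho$ away from the superdiagonal, one can achieve $\omega=0$. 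If one of $\chi_1,\chi_2,\chi_3$ vanishes then $\bar\rho$ lifts directly (a triple Massey product with a zero entry contains $0$), so assume $\chi_1,\chi_2,\chi_3$ are all nonzero, in particular $\chi_2\neq0$. I would then induct using the index-$p$ normal subgroup $N=\Ker(\chi_2)$, which is Frattini-resistant by the first fact above: on $N$ the middle character restricts to $0$, so the restriction of $\bar\rho$ to $N$ lifts to some $\rho_N\colon N\to U_4(\Z/p)$, and the task reduces to gluing $\rho_N$ across the cyclic quotient $G/N\cong\Z/p$ to a lift defined on all of $G$.

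The gluing step is where I expect the real difficulty to lie, and it is the reason the statement is only a conjecture. Via the Lyndon--Hochschild--Serre spectral sequence for $N$ in $G$, the obstruction $\omega$ — which already restricts trivially to $N$ — is pinned down by an inflation component in $\rmH^2(\Z/p,\Z/p)\cong\Z/p$ and by a ``cross-term'' class in $\rmH^1\!\bigl(\Z/p,\Hom(N,\Z/p)\bigr)$ measuring the failure of $\rho_N$ to be compatible with a generator of $G/N$; the goal would be to annihilate both by using Frattini-resistance — for instance, by reading the vanishings $\chi_1\cup\chi_2=0$ and $\chi_2\cup\chi_3=0$ off a minimal presentation of $G$ as the absence of certain commutator relations among lifts of a basis of $G/\Phi(G)$, and then invoking the poset-embedding property to turn this into an inclusion of topologically finitely generated closed subgroups that supplies the required equivariant correction of $\rho_N$. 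The obstacle is that Frattini-resistance is a condition on the \emph{entire} lattice of topologically finitely generated closed subgroups of $G$, and it is not at all clear how to extract from it the local datum — a single quotient or subgroup carrying an honest $U_4(\Z/p)$-representation — that the field-theoretic arguments get for free from Hilbert~90 and the transfer; moreover, no obvious numerical invariant decreases on passing from $G$ to $\Ker(\chi_2)$ (both the minimal number of generators and $\dim_{\F_p}\rmH^2(G,\Z/p)$ may grow), so the induction would have to be organised around a subtler parameter. An alternative line would be to bypass the lifting problem by first proving that the $\Z/p$-cohomology algebra of a Frattini-resistant pro-$p$ group is quadratic — ideally Koszul, with a tensor/extension decomposition parallel to the elementary-type construction of \cite[\S~2.4]{BQW} — and then deducing the vanishing of triple Massey products from the behaviour of Massey products under such decompositions; but this cohomological structure for arbitrary Frattini-resistant pro-$p$ groups is itself open, so I would expect the subgroup-lattice route, delicate as it is, to be the one that ultimately succeeds.
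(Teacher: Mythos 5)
This statement is Conjecture~\ref{conj:massey}: the paper states it as an open problem and offers no proof, so there is nothing to compare your argument against. Your proposal is candid that it is a strategy rather than a proof, and the strategy is reasonable --- the translation via Dwyer's correspondence into a lifting problem for homomorphisms $G\to\dbU_4/\Zen(\dbU_4)$ is exactly the reformulation the paper records in Proposition~\ref{prop:massey} and Remark~\ref{rem:massey Heisenberg}, your observation that Frattini-resistance is inherited by closed subgroups (the Frattini subgroup of a topologically finitely generated pro-$p$ group being intrinsic) is correct, and the verification for oriented pro-$p$ RAAGs via Theorem~\ref{thm:main intro} and Matzri's theorem is the same evidence the paper itself points to.

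The gap is the one you name, and it is genuine; let me sharpen it at two points. First, even the base of your induction is not free: knowing that $\chi_2$ restricts to $0$ on $N=\Ker(\chi_2)$ only guarantees that the triple Massey product $\langle\chi_1|_N,\chi_2|_N,\chi_3|_N\rangle$ contains $0$, i.e.\ that \emph{some} homomorphism $N\to\dbU_4/\Zen(\dbU_4)$ with that superdiagonal lifts --- not that the particular restriction $\bar\rho|_N$ lifts; so $\rho_N$ must be built together with a controlled modification of $\bar\rho|_N$ away from the superdiagonal, and that modification already feeds into the gluing data you would need to control. Second, and more seriously, Frattini-resistance is a condition on the poset of topologically finitely generated closed subgroups, and neither the paper nor your sketch supplies a mechanism for converting it into the vanishing of a class in $\rmH^2(G,\Z/p)$ (or of the inflation and cross-term components in your Lyndon--Hochschild--Serre analysis). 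All known proofs of the vanishing of triple Massey products in the Galois setting use genuinely field-theoretic or cyclotomic input (Hilbert~90, norm maps, or the formal Hilbert~90 of Merkurjev--Scavia, cf.\ Remark~\ref{rem:MerSca}), and already for pro-$p$ groups completing into $1$-cyclotomic oriented pro-$p$ groups the statement is a recent theorem requiring substantial work; Frattini-resistance is, a priori, a weaker hypothesis still. The decisive idea --- how the poset-embedding property produces the equivariant correction of $\rho_N$ --- is exactly what is missing, and the conjecture remains open.
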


In \S~\ref{ssec:massey conj} we formulate Conjecture~\ref{conj:massey} in group-theoretic terms.

%%%%%%%%%%%%%%%%%%%%%%%%%%%%%%%%%%%%55
%%%%%%%%%%%55
%%%%%%%%%%%%%%%%%%%%%%%%%%%%%%%%%%%%%%%%%%%%%%%%%%%%%%%%%%%%%%%%%5
%%%%%%%%%%%55
%%%%%%%%%%%%%%%%%%%%%%%%%%%%%%%%%%%

\section{Frattini-resistance and 1-cyclotomicity}\label{sec:fratres}

%%%%%%%%%%%%%%%%%%%%%%%%%%%%%%%%%%%%55
\subsection{Frattini-resistant pro-$p$ groups}\label{ssec:fratres}

Let $G$ be a pro-$p$ group.
From now on, every subgroup of $G$ will be tacitly assumed to be closed with respect to the pro-$p$ topology, and sets of generators, and presentations, of pro-$p$ groups will be intended in the topological sense.
Also, all morphisms between pro-$p$ groups --- including morphisms from a pro-$p$ group to a finite $p$-group --- will be tacitly assumed to be continuous.

Given two elements $x,y\in G$, we adopt the notation
\[
 {}^xy=x\cdot y\cdot x^{-1}\qquad\text{and}\qquad[x,y]={}^xy\cdot y^{-1}=xyx^{-1}y^{-1}.
\]
Given a positive integer $n$, one has the normal subgroups
\[ G^{p^n} =\left\langle\: x^{p^n}\:\mid\:x\in G\:\right\rangle\qquad\text{and}\qquad
 G' =\left\langle\:[x,y]\:\mid\:x,y\in G\:\right\rangle.
 \]
Finally, the {\sl Frattini subgroup} of $G$ is $\Phi(G)=G^p\cdot G'$.
The quotient $G/\Phi(G)$ is a $\Z/p$-vector space, and a basis of this quotient yields a minimal generating set of $G$ (cf., e.g., \cite[Prop.~1.9]{ddsms}).
We remark that if $G$ is a finitely generated pro-$p$ group, then also $\Phi(G)$ is finitely generated, and the dimension of $G/\Phi(G)$ is finite (cf., e.g., \cite[Prop.~1.14 and Thm.~1.17]{ddsms}).

Thus, one may formulate the definition of Frattini-resistant pro-$p$ group as follows: $G$ is Frattini-resistant if the assignment $H\mapsto \Phi(H)$ yields a homomorphism of partially ordered sets from the partially ordered set of finitely generated subgroups of $G$ into itself (cf. \cite[Def.~1.1]{ilirslobo:products}).
Another notion introduced by Snopce-Tanushevski, tightly related to Frattini-resistance, is the following (cf. \cite[Def.~1.1]{ilirslobo:fratres}.

\begin{defin}\rm
A pro-$p$ group $G$ is said to be {\sl Frattini-injective} if, given two finitely generated subgroups $H_1,H_2$ of $G$, $\Phi(H_1)=\Phi(H_2)$ implies that $H_1=H_2$.
\end{defin}

It is easy to see that if $G$ is Frattini-resistant, then it is also Frattini-injective (cf. \cite[Cor.~4.3]{ilirslobo:fratres}).
A first, easily checked, property enjoyed by Frattini-resistant (and Frattini-injective) pro-$p$ groups is that they are torsion-free (cf. \cite[\S~1]{ilirslobo:fratres}).

\begin{exam}\label{ex:torsion}\rm
 Let $G$ be a pro-$p$ group, and suppose that $g\neq1$ is an element of $G$ yielding non-trivial torsion, i.e., $g^{p^k}=1$ and $g^{p^{k-1}}\neq1$ for some $k\geq1$.
 Then
 $$1=g^{p^k}=\left(g^{p^{k-1}}\right)^p\in\Phi(\{1\})=\{1\}\qquad\text{but}\qquad
 1\neq g^{p^{k-1}}\notin \{1\},$$
 and therefore $G$ is not Frattini-resistant.
 Also, it is not Frattini-injective, as $\Phi(\langle g^{p^{k-1}}\rangle)$ and $\Phi(\{1\})$ are equal.
\end{exam}

One has the following handy characterization of Frattini-resistant pro-$p$ groups (cf. \cite[Prop.~2.1]{ilirslobo:products}).
For the reader's convenience, we recall briefly its proof, as presented by Snopce-Tanushevski.

\begin{prop}\label{prop:fratres commres}
 Let $G$ be a pro-$p$ group.
 The following are equivalent:
 \begin{itemize}
  \item[(i)] $G$ is Frattini-resistant;
  \item[(ii)] if an element $x\in G$ satisfies $x^p\in\Phi(H)$ for some finitely generated subgroup $H\subseteq G$, then $x\in H$.
 \end{itemize}
\end{prop}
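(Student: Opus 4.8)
The plan is to establish the two implications separately; the only substantive ingredient is the shape of the Frattini subgroup of a procyclic pro-$p$ group, after which everything reduces to elementary bookkeeping.

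First I would record a remark valid for any $x\in G$: the subgroup $C=\overline{\langle x\rangle}$ topologically generated by $x$ is finitely generated (by a single element) and procyclic, hence abelian, so $\Phi(C)=C^{p}$; moreover $C/\overline{\langle x^{p}\rangle}$ is procyclic of exponent dividing $p$, hence elementary abelian, and this forces $\Phi(C)\subseteq\overline{\langle x^{p}\rangle}$. (Equality in fact holds, but only this inclusion is needed below.)

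For the implication (i)$\Rightarrow$(ii) I would argue as follows. Assume $G$ is Frattini-resistant and let $x\in G$ satisfy $x^{p}\in\Phi(H)$ for some finitely generated subgroup $H\subseteq G$. Since $\Phi(H)$ is closed we get $\overline{\langle x^{p}\rangle}\subseteq\Phi(H)$, and together with the remark above this yields $\Phi(\overline{\langle x\rangle})\subseteq\Phi(H)$; applying the poset embedding $H'\mapsto\Phi(H')$ to the pair $\overline{\langle x\rangle},H$ gives $\overline{\langle x\rangle}\subseteq H$, i.e. $x\in H$. For (ii)$\Rightarrow$(i) I would first note that the implication $H_{1}\subseteq H_{2}\Rightarrow\Phi(H_{1})\subseteq\Phi(H_{2})$ holds in any pro-$p$ group, since $\Phi(H_{i})=H_{i}^{p}H_{i}'$ is monotone under inclusion; thus only the reverse implication requires proof. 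Assuming (ii), take finitely generated subgroups $H_{1},H_{2}$ with $\Phi(H_{1})\subseteq\Phi(H_{2})$: every $x\in H_{1}$ satisfies $x^{p}\in H_{1}^{p}\subseteq\Phi(H_{1})\subseteq\Phi(H_{2})$, so (ii) applied with $H=H_{2}$ gives $x\in H_{2}$, whence $H_{1}\subseteq H_{2}$ and $G$ is Frattini-resistant.

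I do not anticipate any genuine obstacle; the steps demanding attention are purely of a topological nature --- that a procyclic subgroup counts as finitely generated, that closedness of $\Phi(H)$ allows one to pass from $x^{p}\in\Phi(H)$ to $\overline{\langle x^{p}\rangle}\subseteq\Phi(H)$, and the monotonicity of the Frattini subgroup. It is worth observing that condition (ii) specialized to $H=\{1\}$ already forces $G$ to be torsion-free, in accordance with Example~\ref{ex:torsion}.
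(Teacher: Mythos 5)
Your proof is correct and follows essentially the same route as the paper: in (i)$\Rightarrow$(ii) one compares $\Phi(\overline{\langle x\rangle})=\overline{\langle x^p\rangle}$ with $\Phi(H)$ and applies Frattini-resistance to the pair $\overline{\langle x\rangle}\subseteq H$, and in (ii)$\Rightarrow$(i) one applies condition (ii) elementwise to $H_1$. The extra care you take with closures and with the automatic monotonicity of $H\mapsto\Phi(H)$ is sound but does not change the argument.
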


\begin{proof}
 Suppose that $G$ is Frattini-resistant, and let $x\in G$ and $H\subseteq G$ be such that $x^p\in\Phi(H)$. Then $\Phi(\langle x\rangle)=\langle x^p\rangle\subseteq \Phi(H)$, and hence $\langle x\rangle\subseteq H$.

 Conversely, suppose that condition~(ii) is satisfied, and let $H_1,H_2$ be two subgroups of $G$ such that $\Phi(H_1)\subseteq \Phi(H_2)$.
 Then for every $x\in H_1$ one has $x^p\in \Phi(H_1)\subseteq \Phi(H_2)$, and therefore $x\in H_2$ by condition~(ii). Thus $H_1\subseteq H_2$.
\end{proof}

\begin{exam}\label{ex:freeabel}\rm
 A free abelian pro-$p$ group $G$ is Frattini-resistant.
 Indeed, for any finitely generated subgroup $H\subseteq G$, one has
 \[
  \Phi(H)=H^p=\left\{\:h^p\:\mid\:h\in H\:\right\},
 \]
and hence if $g^p\in \Phi(H)$, for $g\in G$, then $g^p=h^p$ for some $h\in H$.
Thus, $1=g^ph^{-p}=(gh^{-1})^p$, which implies that $g=h$ as $G$ is torsion-free.
\end{exam}

\begin{exam}\label{ex:prop Heisenberg}\rm
The {\sl Heisenberg pro-$p$ group}
 \[\begin{split}
 G &=\left\langle\:x,\:y,\:z\:\mid\:[x,y]=z,\:[x,z]=[y,z]=1\right \rangle \\
 &=\left\langle\:x,\:y\:\mid\:[x,[x,y]]=[[y,[x,y]]=1 \right\rangle
\end{split}
\]
is not Frattini-resistant.
Indeed, let $H$ be the subgroup of $G$ generated by $x,y^p$.
Then $[x,y^p]=z^p$, and
\[
 H=\left\langle\:x,\:y^p,\:z^p\:\mid\:\left[x,y^p\right]=z^p,\:[x,z^p]=[y^p,z^p]=1\right \rangle,
\]
so that $z^p\in \Phi(H)$, but $z\notin H$.
\end{exam}

One may employ direct products with free abelian pro-$p$ groups to produce new Frattini-resistant pro-$p$ groups, but only under certain restrictions: the following proposition is due to Snopce-Tanushevski, cf. \cite[Thm.~A]{ilirslobo:products} --- we recall that a pro-$p$ group $G$ is said to be {\sl absolutely torsion-free} if for every subgroup $H$, the abelianization $H/H'$ is a free abelian pro-$p$ group, cf. \cite{wurfel}.

\begin{prop}\label{prop:directproducts}
 Let $G_1,G_2$ be two non-trivial pro-$p$ groups, and set $G=G_1\times G_2$.
 Then $G$ is Frattini-resistant if, and only if, and only if, both $G_1,G_2$ are absolutely torsion-free, and at least one of the two factors is a free abelian pro-$p$ group.
\end{prop}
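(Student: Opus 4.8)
\emph{Strategy.}
The proof will run through the characterization of Frattini-resistance in Proposition~\ref{prop:fratres commres}: a pro-$p$ group $G$ is Frattini-resistant exactly when $x^{p}\in\Phi(H)$ implies $x\in H$, for every $x\in G$ and every finitely generated $H\subseteq G$. Two elementary remarks will be used repeatedly. First, Frattini-resistance is inherited by closed subgroups, since a finitely generated subgroup of $K\subseteq G$ is a finitely generated subgroup of $G$ and Frattini subgroups are computed intrinsically; in particular, if $G_{1}\times G_{2}$ is Frattini-resistant then so are $G_{1}$ and $G_{2}$. Second, if both $G_{1}$ and $G_{2}$ are free abelian then $G$ is free abelian, hence Frattini-resistant by Example~\ref{ex:freeabel}; thus this case may be set aside.

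\emph{Sufficiency.}
Assume, say, that $G_{1}=A$ is free abelian and $G_{2}=B$ is absolutely torsion-free; I verify the criterion of Proposition~\ref{prop:fratres commres} for $A\times B$. Let $H\subseteq A\times B$ be finitely generated and let $x=(a,b)$ satisfy $x^{p}\in\Phi(H)$. Since $A$ is abelian one has $\Phi(A\times B)=A^{p}\times\Phi(B)$ and $H'\subseteq 1\times B$, so that $\pi_{A}(\Phi(H))=\pi_{A}(H)^{p}$; as $\pi_{A}(H)$ is a finitely generated, hence free abelian, subgroup of $A$, the relation $a^{p}\in\pi_{A}(H)^{p}$ and torsion-freeness give $a\in\pi_{A}(H)$. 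Multiplying $x$ by a suitable element of $H$, we are reduced to $x=(1,b)\in 1\times B$, and projecting onto $B$ we obtain $b^{p}\in\Phi(\pi_{B}(H))$. It then remains to upgrade this to $(1,b)\in H$, i.e.\ to $b\in\pi_{B}\bigl(H\cap(1\times B)\bigr)$, and this is precisely the point where absolute torsion-freeness of $B$ enters: it forces the abelianizations of $\pi_{B}(H)$ and of $H\cap(1\times B)$ to be free abelian, giving enough control over the $p$-th powers produced by the Hall--Petrescu collection identities to pass from $\pi_{B}(H)$ down to the fibre $H\cap(1\times B)$.

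\emph{Necessity.}
Assume $G=G_{1}\times G_{2}$ is Frattini-resistant. First one checks that a Frattini-resistant pro-$p$ group is absolutely torsion-free --- otherwise a closed subgroup $N$ with $N/N'$ not torsion-free would, via a Heisenberg-type argument (an element $m$ with $m\notin N'$ but $m^{p}\in N'\subseteq\Phi(H)$ for a suitable finitely generated $H$), contradict Proposition~\ref{prop:fratres commres}, exactly as for the Heisenberg group in Example~\ref{ex:prop Heisenberg}. Hence the closed subgroups $G_{1}$ and $G_{2}$ are absolutely torsion-free, and in particular torsion-free, so that an abelian factor is automatically free abelian. It remains to show that one of $G_{1},G_{2}$ is free abelian; suppose not, so that $G_{1}$ and $G_{2}$ are both non-abelian. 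Pick $a,b\in G_{1}$ with $[a,b]\neq 1$ and $1\neq c\in G_{2}$. The goal is to exhibit a finitely generated $H\subseteq G_{1}\times G_{2}$ and an $x\notin H$ with $x^{p}\in\Phi(H)$, contradicting Proposition~\ref{prop:fratres commres}; the mechanism is again the one behind Example~\ref{ex:prop Heisenberg}, with the non-trivial element $c$ of the second factor supplying the ``room'' that lets a commutator of $G_{1}$ sit inside $\Phi(H)$ as a $p$-th power without lying in $H$ itself. Producing such $H$ and $x$ completes the argument.

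\emph{Main obstacle.}
The heart of the proof --- and the step I expect to be hardest --- is the construction in the last paragraph: engineering $H$ and the witness $x$ from a non-abelian factor and a second non-trivial factor. The obvious candidates provably fail: if $H$ is a diagonal subgroup, or the fibre product of $G_{1}$ and $G_{2}$ over $\Z/p$, one computes that $x^{p}\in\Phi(H)$ already forces $x\in H$, so the construction must genuinely engineer a ``Heisenberg-type'' coincidence (a nested commutator realised as a $p$-th power inside $\Phi(H)$ but not inside $H$) spread across the two factors. The parallel bookkeeping in the sufficiency direction --- tracking how $\Phi$ interacts with the subdirect-product structure of an arbitrary finitely generated subgroup of $A\times B$, and closing the argument using only absolute torsion-freeness of $B$ rather than a stronger hypothesis --- is the secondary technical difficulty; the remaining reductions and passages to subgroups are routine.
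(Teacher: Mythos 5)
The paper does not actually prove this proposition: it is quoted from Snopce--Tanushevski \cite[Thm.~A]{ilirslobo:products}, so there is no internal proof to compare your argument with. Judged on its own terms, your proposal is a strategy whose two decisive steps are missing, and one of them rests on a false claim. In the necessity direction you assert that ``a Frattini-resistant pro-$p$ group is absolutely torsion-free'' and propose to deduce absolute torsion-freeness of $G_1$ and $G_2$ from that. This is not true: a Demushkin group with invariant $q=p^f\neq 0$ --- for instance the maximal pro-$p$ Galois group of a $p$-adic local field containing a root of unity of order $p$ --- is Frattini-resistant by Example~\ref{exam:GK}, yet its abelianization is $\Z/q\oplus\Z_p^{\,d-1}$, which has torsion, so it is not absolutely torsion-free. (Your sketched mechanism also cannot work as stated: if $m\in N$ and $H=N$, then $m^p\in\Phi(H)$ and $m\in H$ give no contradiction; one needs a proper $H$ avoiding $m$ yet with $N'\subseteq\Phi(H)$, which need not exist inside a single factor.) The correct argument must exploit the \emph{second non-trivial factor}: from a finitely generated $N\subseteq G_1$ with torsion in $N/N'$ and a suitable homomorphism $\phi\colon N\to G_2$ one forms the graph subgroup $H=\{(n,\phi(n))\,\mid\,n\in N\}$, for which a torsion element of $N/N'$ violates Proposition~\ref{prop:fratres commres}. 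This, together with the construction ruling out two non-abelian factors, is exactly what you flag as the ``main obstacle'' and leave undone.

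The sufficiency direction also has unfilled holes. The reduction ``multiplying $x$ by a suitable element of $H$ we are reduced to $x=(1,b)$'' is not automatic: if $h=(a,b_0)\in H$, then $(xh^{-1})^p=(1,(bb_0^{-1})^p)$, which need not lie in $\Phi(H)$ even though $x^p$ and $h^p$ do, since $b$ and $b_0$ need not commute; so the criterion of Proposition~\ref{prop:fratres commres} cannot simply be reapplied to $xh^{-1}$. And the final step --- passing from $b^p\in\Phi(\pi_B(H))$ to $(1,b)\in H$ --- is where essentially all the content of the theorem lives; invoking ``enough control over the $p$-th powers produced by the Hall--Petrescu identities'' is a placeholder, not an argument. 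As it stands the proposal correctly identifies the shape of the problem but establishes neither implication.
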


%%%%%%%%%%%%%%%%%%%%%%%%%%%%%%%%%%%%55
\subsection{1-cyclotomic oriented pro-$p$ groups}\label{ssec:1cyc}
Let $1+p\Z_p$ denote the multiplicative group of principal units of the ring of $p$-adic integers $\Z_p$ --- i.e.,
\[
 1+p\Z_p=\{\:1+p\lambda\:\mid\:\lambda\in\Z_p\:\}.
\]
If $p\neq2$ then $1+p\Z_p\simeq\Z_p$ as an abelian pro-$p$ group.

Given a pro-$p$ group $G$, an {\sl orientation} of $G$ is a homomorphism $\theta\colon G\to 1+p\Z_p$, and the pair $(G,\theta)$ is called an {\sl oriented pro-$p$ group} (cf. \cite{qw:cyc}; oriented pro-$p$ groups were introduced by I.~Efrat in \cite{efrat:small} with the name ``cyclotomic pro-$p$ pairs'').
An oriented pro-$p$ group $(G,\theta)$ comes endowed with a canonical continuous $G$-module $\Z_p(\theta)$, which is isomorphic to $\Z_p$ as an abelian pro-$p$ group, with the action given by
\[
 g.\lambda=\theta(g)\cdot \lambda\qquad\text{for every }g\in G,\:\lambda\in\Z_p(\theta).
\]
Conversely, if $G$ is a pro-$p$ group and $M$ is a continuous $G$-module, isomorphic to $\Z_p$ as an abelian pro-$p$ group, then the action of $G$ on $M$ induces an orientation $\theta\colon G\to1+p\Z_p$ by $\theta(g)m=g.m$ for every $m\in M$, and $M\simeq\Z_p(\theta)$ as a continuous $G$-module.

Now consider the epimorphisms of continuous $G$-modules
$$\xymatrix{\Z_p(\theta)/p^n\Z_p(\theta)\ar@{->>}[r] & \Z_p(\theta)/p\Z_p(\theta)},$$
induced by the canonical projections $\Z/p^n\twoheadrightarrow\Z_p$, for every $n\geq1$ --- observe that the $G$-module $\Z_p(\theta)/p\Z_p(\theta)$ is isomorphic to the trivial $G$-module $\Z/p$.
These epimorphisms induce in cohomology the natural maps
\begin{equation}\label{eq:def 1cyc}
 \rmH^1(G,\Z_p(\theta)/p^n\Z_p(\theta))\longrightarrow \rmH^1(G,\Z/p).
\end{equation}
An oriented pro-$p$ group $(G,\theta)$ is said to be {\sl 1-cyclotomic} if the maps \eqref{eq:def 1cyc} are surjective for every $n\geq1$, and also for every subgroup $H$ of $G$, replacing $G$ with $H$ and  $\Z_p(\theta)$ with $\Z_p(\theta\vert_H)$ in \eqref{eq:def 1cyc}.

\begin{rem}\label{rem:H1}\rm
 The cohomology group of degree 1, $\rmH^1(G,\Z/p)$, is the group of homomorphisms of pro-$p$ groups $G\to \Z/p$.
 Hence, one has an isomorphism of discrete $\Z/p$-vector spaces $\rmH^1(G,\Z/p)\simeq(G/\Phi(G))^\ast$, where the latter is the $\Z/p$-dual of $G/\Phi(G)$ (cf., e.g., \cite[Chap.~I, \S~4.2]{serre:galc}).
\end{rem}

We say that a pro-$p$ group $G$ may complete into a 1-cyclotomic oriented pro-$p$ group if there exists an orientation $\theta\colon G\to1+p\Z_p$ such that the oriented pro-$p$ group $(G,\theta)$ is 1-cyclotomic.
In \cite[Thm.~1.11]{ilirslobo:fratres}, I.~Snopce and S.~Tanushevski prove that pro-$p$ groups which may complete into 1-cyclotomic oriented pro-$p$ groups are Frattini-resistant (with a condition if $p=2$).

\begin{prop}\label{prop:1cyc fratres}
 Let $(G,\theta)$ be a 1-cyclotomic torsion-free oriented pro-$p$ group, and suppose that $\Img(\theta)\subseteq1+4\Z_2$ if $p=2$.
 Then $G$ is strongly Frattini-resistant.
\end{prop}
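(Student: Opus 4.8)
The plan is to deduce strong Frattini‑resistance from the criterion in Proposition~\ref{prop:fratres commres}: it is enough to prove that for every closed subgroup $H\leq G$ and every $x\in G$, the relation $x^{p}\in\Phi(H)$ forces $x\in H$. I would allow $H$ to be an arbitrary closed subgroup (not necessarily finitely generated), so that one obtains at once the poset‑embedding $\Phi(H_{1})\subseteq\Phi(H_{2})\Rightarrow H_{1}\subseteq H_{2}$ for all closed subgroups, i.e.\ strong Frattini‑resistance; the case $H=\{1\}$ is covered by the torsion‑freeness of $G$ (which in any case follows from 1‑cyclotomicity --- test the definition on a cyclic subgroup of order $p$, whose action is trivial). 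Arguing by contraposition, assume $x\notin H$; the task is to exhibit a function detecting $x^{p}\notin\Phi(H)$. Put $L=\langle H,x\rangle$, a closed subgroup with $H\subsetneq L$, and let $\bar H$ and $\bar x$ be the images of $H$ and $x$ in the $\Z/p$‑vector space $L/\Phi(L)$. Then $L/\Phi(L)=\bar H+(\Z/p)\bar x$ (the algebraic sum is already closed, being compact, and dense since $L=\langle H,x\rangle$), and $\bar H$ is a proper subspace, since $\bar H=L/\Phi(L)$ would give $H\Phi(L)=L$, hence $H=L$ by the non‑generator property of $\Phi(L)$, contradicting $H\subsetneq L$. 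Consequently $\bar x\notin\bar H$, so there is $f\in\rmH^{1}(L,\Z/p)=\Hom(L,\Z/p)$ with $f(H)=0$ and $f(x)\neq0$.

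Next I would use 1‑cyclotomicity to lift $f$ to a twisted cocycle. Write $M_{n}=\Z_{p}(\theta)/p^{n}\Z_{p}(\theta)$, so $M_{1}\cong\Z/p$ is the trivial module, and apply the hypothesis to the subgroup $L$ with $n=2$: the map $\rmH^{1}(L,M_{2})\to\rmH^{1}(L,\Z/p)$ is surjective, so $f$ is the image of a class represented by a continuous $1$‑cocycle $c\colon L\to M_{2}$, and since the coboundaries into the trivial module $M_{1}$ vanish, the reduction $M_{2}\twoheadrightarrow M_{1}$ carries $c(g)$ to $f(g)$ for every $g\in L$. Hence $c(x)$ is a generator of $M_{2}\cong\Z/p^{2}$, whereas $c(h)\in\ker(M_{2}\twoheadrightarrow M_{1})=pM_{2}$ for every $h\in H$. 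The cocycle identity (with $g\cdot m=\theta(g)m$) gives $c(g^{k})=\bigl(\sum_{i=0}^{k-1}\theta(g)^{i}\bigr)c(g)$ and $c([g_{1},g_{2}])=(1-\theta(g_{2}))c(g_{1})+(\theta(g_{1})-1)c(g_{2})$. The crucial numerical fact is that $\sum_{i=0}^{p-1}\theta(g)^{i}$ has $p$‑adic valuation exactly $1$ for every $g\in G$: for $p$ odd this holds whenever $\theta(g)\in1+p\Z_{p}$, and for $p=2$ it holds whenever $\theta(g)\in1+4\Z_{2}$, but may fail for $\theta(g)\in1+2\Z_{2}$ --- this is precisely the role of the hypothesis on $\Img(\theta)$. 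Granting it, $c(h^{p})$ and $c([h_{1},h_{2}])$ both lie in $p^{2}M_{2}=0$ for all $h,h_{1},h_{2}\in H$ (since $c(h)\in pM_{2}$ and $1-\theta(h)\in p\Z_{p}$), so $c$ vanishes on a topological generating set of $\Phi(H)=H^{p}H'$, hence on $\Phi(H)$ by continuity. On the other hand $c(x^{p})=\bigl(\sum_{i=0}^{p-1}\theta(x)^{i}\bigr)c(x)$ equals $p$ times a unit in $\Z/p^{2}$, because $c(x)$ is a generator and the sum has valuation exactly $1$, so $c(x^{p})\neq0$. Therefore $x^{p}\notin\Phi(H)$, as wanted.

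I expect the main difficulty to lie in two places. First, in the passage carried out in the opening paragraph, from the purely set‑theoretic condition $x\notin H$ to a genuine cohomological separation inside $L=\langle H,x\rangle$ --- it is this step that makes 1‑cyclotomicity applicable, and it forces one to work with the ambient subgroup $L$ rather than with $H$ alone. Second, in the $2$‑adic valuation estimate for $\sum_{i=0}^{p-1}\theta(g)^{i}$: here one computes $1+\theta(g)=2+4\lambda=2(1+2\lambda)$ when $\theta(g)=1+4\lambda$ (so the valuation is $1$), versus $1+\theta(g)=2(1+\lambda)$ with $1+\lambda$ possibly even when $\theta(g)=1+2\lambda$, which shows the restriction $\Img(\theta)\subseteq1+4\Z_{2}$ for $p=2$ cannot be dropped. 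By contrast, the cocycle identities and the vanishing of $c$ on $\Phi(H)$ are routine induction‑plus‑continuity verifications.
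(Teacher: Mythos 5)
Your argument is correct, but note that the paper does not actually prove this proposition: it is imported verbatim from Snopce--Tanushevski \cite[Thm.~1.11]{ilirslobo:fratres}, so there is no in-text proof to compare against. What you have written is a sound, self-contained reconstruction of that argument, and it follows the standard route (the same one underlying the Kummerian-property results of Efrat--Quadrelli): separate $x$ from $H$ by a homomorphism $f\colon L=\langle H,x\rangle\to\Z/p$ killing $H$, lift $f$ through the surjection $\rmH^1(L,\Z_p(\theta)/p^2)\to\rmH^1(L,\Z/p)$ to a twisted cocycle $c$, and then read off that $c$ kills $\Phi(H)$ but not $x^p$. All the delicate points check out: the identification of the cocycle with $f$ modulo $p$ is legitimate because $B^1(L,\Z/p)=0$ for the trivial module; the passage from ``$c$ vanishes on all $h^p$ and $[h_1,h_2]$'' to ``$c$ vanishes on $\Phi(H)$'' is valid because the vanishing locus of a $1$-cocycle is a closed subgroup (this deserves a sentence in a written-up version, since $c$ is not a homomorphism); the valuation computation $v_p\bigl(\sum_{i=0}^{p-1}\theta(g)^i\bigr)=1$ is where $p$ odd, respectively $\Img(\theta)\subseteq1+4\Z_2$, is genuinely used; and working with arbitrary closed subgroups $H$ (permitted because $1$-cyclotomicity is required of every closed subgroup) is exactly what upgrades the conclusion from Frattini-resistant to strongly Frattini-resistant.
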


One of the most relevant examples of 1-cyclotomic oriented pro-$p$ groups --- and of Frattini-resistant pro-$p$ groups --- is provided by Galois theory (cf. \cite[\S~4]{eq:kummer} and \cite[\S~2.4]{cq:chase}).

\begin{exam}\label{exam:GK}\rm
Let $\K$ be a field containing a root of 1 of order $p$ (and also $\sqrt{-1}$ if $p=2$), and consider its maximal pro-$p$ Galois group $G_{\K}(p)$.
The {\sl pro-$p$ cyclotomic character} $\theta_{\K}$ of $G_{\K}(p)$ is the orientation $\theta_{\K}\colon G_{\K}(p)\to1+p\Z_p$ satisfying
\[
 g(\zeta)=\zeta^{\theta_{\K}(g)}\qquad\text{for every }g\in G\text{ and }\zeta\in\bar\K_s^\times\text{ of order }p^k
\]
for some $k\geq1$ (here $\bar\K_s$ denotes the separable closure of $\K$).
Since we are assuming that $\sqrt{-1}\in\K$ if $p=2$, then in this case $\Img(\theta_{\K})\subseteq1+4\Z_2$.
The oriented pro-$p$ group $(G_{\K}(p),\theta_{\K})$ is 1-cyclotomic, and therefore the maximal pro-$p$ Galois group $G_{\K}(p)$ is Frattini-resistant.
\end{exam}

%%%%%%%%%%%%%%%%%%%%%%%%%%%%%%%%%%%%55
%%%%%%%%%%%55
%%%%%%%%%%%%%%%%%%%%%%%%%%%%%%%%%%%%%%%%%%%%%%%%%%%%%%%%%%%%%%%%%5
%%%%%%%%%%%55
%%%%%%%%%%%%%%%%%%%%%%%%%%%%%%%%%%%

\section{Digraphs}\label{sec:orgraphs}

%%%%%%%%%%%%%%%%%%%%%%%%%%%%%%%%%%%%55
\subsection{Digraphs and special digraphs}\label{ssec:special}

Let $\Gamma=(\calV,\calE)$ be a digraph.
Recall that the vertices that are the second coordinate of a special edge are special (and we represent them with black bullets); conversely, the vertices that are not special are called ordinary vertices (and we represent them with white bullets).
One has the following notions for digraphs.
\begin{itemize}
 \item[(a)] An {\sl induced subdigraph} of $\Gamma$ is a digraph $\Gamma'=(\calV',\calE')$ such that $\calV'\subseteq \calV$, and $\calE'=\calE\:\cap\:(\calV'\times\calV')$;
and moreover, a vertex $v\in\calV'$ is special, respectively ordinary, whenever it is a special, respectively ordinary, vertex of $\Gamma$.
\item[(b)] A special vertex $w\in\calV$ is called a {\sl sinkhole} if $(u,w)$ is a special edge of $\Gamma$ whenever $u\in\calV$ is another vertex which is joined to $w$.
\item[(c)] $\Gamma$ is a {\sl special digraph} if every special vertex is a sinkhole.
\end{itemize}

For example, the digraph represented in \eqref{eq:exampleintro} and the left-side digraph in \eqref{eq:examples graphs square} are special digraphs, while the right-side digraph in \eqref{eq:examples graphs square} is not, as $v_1$ is not a sinkhole.

\begin{rem}\label{rem:non or graph}\rm
\begin{itemize}
 \item[(a)] Henceforth, if $\Gamma=(\calV,\calE)$ is a digraph and $(u,v),(v,u)\in\calE$ are ordinary edges, then we will identify them, and we will say that $u,v$ are joined by a single ordinary edge, with an abuse of notation.
Moreover, we will call digraphs without special edges ``undigraphs'' (cf. \cite[Rem.~2.3]{cq:orMassey}).
\item[(b)] In \cite{BQW}, a digraph is --- uncorrectly --- called an ``oriented graph'', while an actual oriented graph is a digraph without ordinary edges (cf., e.g., \cite[p.~28]{diestel}).
Hence, all results of \cite{BQW} on ``oriented graphs'' (and associated oriented pro-$p$ RAAGs), should be read as results on digraphs (and associated oriented pro-$p$ RAAGs).
\end{itemize}

\end{rem}

It is straightforward to see that a digraph is special if, and only if, none of the following occurs:
\begin{equation}\label{eq:subgraphs no special}
 \xymatrix@R=1.5pt{&x&\\ \circledast\ar@/_1.25pc/@{--}[rr]\ar[r] & \bullet\ar[r] & \bullet \\  z &  & y}
 \qquad\text{or}\qquad
\xymatrix@R=1.5pt{&x&\\ \circledast\ar@/_1.25pc/@{--}[rr]\ar[r] & \bullet\ar@{-}[r] & \circledast \\  z &  & y}
 \end{equation}
--- no matter whether the bottom vertices are joined or not (here we use $\circledast$ to represent vertices which are not necessarily ordinary nor special) ---  cf. \cite[\S~2.3]{BQW} or \cite[\S~2.2]{cq:orMassey}.
Altogether, all possible cases of induced subdigraphs with three vertices which prevent a digraph to be special are seven: namely, for each of the
two representations in \eqref{eq:subgraphs no special} one has four possible cases --- $z,y$ are disjoint; or joined by an ordinary edge; or joined by a special edge, either $(z,y)$ or $(y,z)$ ---, and the two representations
\[ \xymatrix@R=1.5pt{&x&\\ \circ\ar@/_1.25pc/@{-}[rr]\ar[r] & \bullet\ar[r] & \bullet \\  z &  & y}
 \qquad\text{and}\qquad
\xymatrix@R=1.5pt{&x&\\ \bullet\ar[r] & \bullet\ar@{-}[r] & \circ\ar@/^1.25pc/[ll] \\  z &  & y}
\]
yield the same case, with the ``roles'' of the vertices $x,y,z$ permuted cyclically (see \S~\ref{sec:nonspecial} below).

%%%%%%%%%%%%%%%%%%%%%%%%%%%%%%%%%%%%55
\subsection{Digraphs of elementary type}\label{ssec:et graph}
One has the following two operations with digraphs.
\begin{itemize}
 \item[(a)] Given two digraphs $\Gamma_1=(\calV_1,\calE_1)$ and $\Gamma_2=(\calV_2,\calE_2)$, their {\sl disjoint union} is the digraph $\Gamma=(\calV,\calE)$ with
 \[
  \calV=\calV_1\:\dot\cup\:\calV_2\qquad\text{and}\qquad\calE=\calE_1\:\dot\cup\:\calE_2.
 \]
\item[(b)] Given a digraph $\Gamma=(\calV,\calE)$, the {\sl cone} $\nabla(\Gamma)=(\calV_{\nabla(\Gamma)},\calE_{\nabla(\Gamma)})$ of $\Gamma$ is the digraph obtained by adding a ``new'' ordinary vertex to $\Gamma$, and joining it with all ordinary vertices of $\Gamma$, via ordinary edges, and with all special vertices of $\Gamma$ via special edges: namely,
\[
\begin{split}
  \calV_{\nabla(\Gamma)}&=\{\:u\:\}\:\dot\cup\:\calV,\\
\calE_{\nabla(\Gamma)}&=\{\:(u,v),\:(v,u),\:(u,w)\:\mid\:v,w\in\calV,\text{ $v$ ordinary, $w$ special}\:\}\:\dot\cup\:\calE,\\
\end{split}
\]
where $u$ is the ``new'' vertex (which we call the tip of the cone).
\end{itemize}

\begin{defin}\label{defin:ET graph}\rm
 A digraph $\Gamma=(\calV,\calE)$ is of {\sl elementary type} if it may be obtained by iterating disjoint union and cones starting from a subset $\calV_0$ of $\calV$.
\end{defin}

\begin{exam}\rm
 Let $\Gamma=(\calV,\calE)$ be the left-side digraph in \eqref{eq:examples graphs square}.
 Then $\Gamma$ is of elementary type, and it may be constructed as follows: we start with the disjoint vertices $v_1$ and $v_3$, which are special, then we make the cone with the ordinary vertex $v_3$ as the tip, and finally we make the cone of the resulting digraph with the ordinary vertex $v_4$ as tip.
 \[
 \xymatrix@R=1.5pt@C=7pt{ v_1 && v_3\\ \bullet&&\bullet}
 \qquad\ \rightsquigarrow\qquad
 \xymatrix@R=1.5pt@C=7pt{ v_1 && v_3\\ \bullet& v_3&\bullet \\ &\circ\ar[ur]\ar[ul]&}
\qquad\ \rightsquigarrow\qquad
 \xymatrix@R=1.5pt@C=7pt{ v_1 && v_3\\ \bullet& v_3&\bullet \\ &\circ\ar[ur]\ar[ul]& \\ \\ &\circ\ar@{-}[uu]\ar[uuul]\ar[uuur]&\\ &v_4&}
 \]
 In this case $\calV_0=\{v_1,v_3\}$.
\end{exam}

\begin{exam}\rm
 Consider the digraph $\Gamma=(\calV,\calE)$ with geometric realization
 \[  \xymatrix@R=1.5pt{ v_1 &  v_2 & v_3 \\ \circ\ar[ddd] & \bullet & \circ\ar[ddd]\ar@{-}[dddl] \\ \\ \\
 \bullet&\circ\ar[uuu]\ar[l]\ar@{-}[uuul]\ar[r]&\bullet \\ v_4&v_5&v_6} \]
Then $\Gamma$ is of elementary type, and we construct it as follows: we start with $\calV_0=\{v_2,v_4,v_6\}$, first we make separately the cones $\nabla(\{v_4\},\varnothing)$ and $\nabla(\{v_6\},\varnothing)$ with tips respectively $v_1$ and $v_3$, then we take the disjoint union of the two cones and of $(\{v_2\},\varnothing)$, finally we make the cone of the disjoint union with tip $v_5$.
\[
 \xymatrix@R=1.5pt@C=8.5pt{ v_4 & v_2& v_6\\ \bullet&\bullet&\bullet}
 \qquad\ \rightsquigarrow\qquad
 \xymatrix@R=1.5pt@C=8.5pt{ v_4 & v_2& v_6\\ \bullet&\bullet&\bullet \\ \\ \circ\ar[uu]&&\circ\ar[uu] \\ v_1&&v_3}
\qquad\ \rightsquigarrow\qquad
 \xymatrix@R=1.5pt@C=8.5pt{ v_4 & v_2& v_6\\ \bullet&\bullet&\bullet \\ \\
 \circ\ar[uu]&&\circ\ar[uu]  \\ v_1&&v_3\\
 &\circ\ar@{-}[uul]\ar@{-}[uur]\ar[uuuul]\ar[uuuur]\ar[uuuu]&\\ &v_5&}
 \]
\end{exam}

The definition of digraph of elementary type given in the Introduction
is equivalent to Definition~\ref{defin:ET graph}.

\begin{prop}
A digraph $\Gamma=(\calV,\calE)$ satisfies Definition~\ref{defin:ET graph} if, and only if, it is special and for every induced subdigraph $\Gamma'$ of $\Gamma$, either
\begin{itemize}
 \item[(a)] $\Gamma'$ has at least two distinct connected components,
 \item[(b)] or $\Gamma'$ has an ordinary vertex which is adjacent to all other vertices of $\Gamma'$.
\end{itemize}
\end{prop}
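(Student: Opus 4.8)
The plan is to prove both implications by induction, using throughout two hereditary facts that I would record first. An induced subdigraph of a special digraph is again special: if $w$ is special in an induced subdigraph $\Gamma'$ of $\Gamma$ then $w$ is special in $\Gamma$, hence a sinkhole in $\Gamma$, and since the neighbours of $w$ in $\Gamma'$ form a subset of its neighbours in $\Gamma$, $w$ is a sinkhole in $\Gamma'$. And an induced subdigraph of an induced subdigraph of $\Gamma$ is an induced subdigraph of $\Gamma$. I would also use the equivalent recursive form of Definition~\ref{defin:ET graph}, which follows by looking at the last operation in a construction of $\Gamma$: a digraph is of elementary type if and only if it is a single vertex, or a disjoint union $\Gamma_1\,\dot\cup\,\Gamma_2$ of two digraphs of elementary type, or a cone $\nabla(\Gamma_0)$ over a digraph $\Gamma_0$ of elementary type. (Throughout I read ``induced subdigraph'' as ``nonempty induced subdigraph'', and I would dispose of the degenerate cases --- the empty or one-vertex digraph, and disjoint unions or cones involving an empty factor --- at the start of each induction.)

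For ``$\Rightarrow$'' I would induct along this recursion. The one-vertex digraph is special (it has no special vertices) and its vertex is ordinary and vacuously adjacent to all others, so (b) holds. For $\Gamma=\Gamma_1\,\dot\cup\,\Gamma_2$: no new edges are created, so specialness passes from $\Gamma_1,\Gamma_2$ to $\Gamma$; and a nonempty induced subdigraph $\Gamma'$ either meets both $\calV_1$ and $\calV_2$, hence is disconnected and satisfies (a), or lies in one $\Gamma_i$, where the inductive hypothesis applies. For $\Gamma=\nabla(\Gamma_0)$ with tip $u$: the special vertices of $\Gamma$ are precisely those of $\Gamma_0$, and each acquires only the single new special edge $(u,w)$ pointing at it, so it remains a sinkhole and $\Gamma$ is special; a nonempty induced subdigraph $\Gamma'$ either contains $u$, in which case the ordinary vertex $u$ is adjacent to all other vertices of $\Gamma'$ and (b) holds, or lies in $\Gamma_0$, where the inductive hypothesis applies.

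For ``$\Leftarrow$'' I would induct on $|\calV|$, the case $|\calV|\le1$ being immediate. For $|\calV|\ge2$, apply the hypothesis to $\Gamma$ itself. If (a) holds then $\Gamma$ is disconnected, and splitting its connected components into two nonempty families exhibits $\Gamma=\Gamma_1\,\dot\cup\,\Gamma_2$ with each $\Gamma_i$ an induced subdigraph on fewer vertices; each $\Gamma_i$ is then special and still satisfies the hypothesis, hence is of elementary type by induction, and so is $\Gamma$. If (b) holds, fix an ordinary vertex $u$ adjacent to all others and set $\Gamma_0:=\Gamma\smallsetminus\{u\}$; I claim $\Gamma=\nabla(\Gamma_0)$. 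The edges of $\Gamma$ not incident to $u$ are exactly those of $\Gamma_0$; the edge joining $u$ to a special vertex $w$ must be $(u,w)$, since $\Gamma$ is special and $w$ is a sinkhole; and the edge joining $u$ to an ordinary vertex $v\ne u$ cannot be special --- a special edge $(v,u)$ would make $u$ special and a special edge $(u,v)$ would make $v$ special --- so it is ordinary. These are exactly the edges prescribed by the cone construction, so $\Gamma=\nabla(\Gamma_0)$; and since $\Gamma_0$ is an induced subdigraph of $\Gamma$ on one fewer vertex, it is special and inherits the hypothesis, hence is of elementary type by induction, whence so is $\Gamma$.

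I expect the substantive work to be purely bookkeeping: checking that $\dot\cup$ and $\nabla$ preserve specialness (done above directly from the sinkhole definition, which is cleaner than tracking the forbidden triples of \eqref{eq:subgraphs no special}), pinning down the type of each edge at the apex $u$ in case (b), and keeping the special/ordinary labels straight --- they are inherited by induced subdigraphs and are assigned consistently by $\nabla$ --- so that $\Gamma=\Gamma_1\,\dot\cup\,\Gamma_2$ and $\Gamma=\nabla(\Gamma_0)$ hold as identities of labelled digraphs and not merely of underlying graphs.
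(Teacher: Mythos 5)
Your proposal is correct and follows essentially the same route as the paper: the forward direction by induction along the disjoint-union/cone construction, and the converse by induction on the number of vertices, using (a) to split $\Gamma$ as a disjoint union and (b) together with the sinkhole property to exhibit $\Gamma$ as a cone over $\Gamma\smallsetminus\{u\}$. Your explicit verification that the edges at the apex $u$ joining it to ordinary vertices must themselves be ordinary, and that specialness and the (a)/(b) hypothesis are inherited by induced subdigraphs, just makes precise steps the paper passes over quickly.
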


\begin{proof}
First, suppose that $\Gamma$ satisfies Definition~\ref{defin:ET graph}.
Since all new vertices added with the procedure described in the definitions are the tips of the cones, and thus ordinary vertices, all special vertices of $\Gamma$ are in the starting set $\calV_0$.
Moreover, these vertices are disjoint, and whenever we add the tip $u$ of a cone, the edge joining $u$ the special vertices in $\calV_0$ are special edges with $u$ as first coordinate.
Hence, every special vertex of $\Gamma$ is a sinkhole, and thus $\Gamma$ is a special digraph.

Now let $\Gamma'=(\calV',\calE')$ be a proper induced subdigraph of $\Gamma$.
We proceed by induction on the number of vertices of $\Gamma$.
Since $\Gamma$ satisfies Definition~\ref{defin:ET graph}, either $\Gamma=\Gamma_1\:\dot\cup\:\Gamma_2$ for two proper subdigraphs $\Gamma_1,\Gamma_2$, or $\Gamma=\nabla(\bar\Gamma)$ for some proper subdigraph $\bar \Gamma$ of $\Gamma$.
Clearly, in both cases the digraphs $\Gamma_1,\Gamma_2$, and $\bar\Gamma$, satisfy Definition~\ref{defin:ET graph}, and they have less vertices than $\Gamma$: therefore, by induction their induced subdigraphs satisfy one of the two conditions~(a)--(b).
\begin{itemize}
 \item If $\Gamma=\Gamma_1\:\dot\cup\:\Gamma_2$, either $\Gamma'$ is the disjoint union of two non-trivial induced subdigraphs respectively of $\Gamma_1$ and $\Gamma_2$ --- and hence $\Gamma'$ satisfies condition~(a) ---, or $\Gamma'$ is an induced subdigraph of $\Gamma_i$ with $i\in\{1,2\}$ --- and hence it satisfies one of the two conditions~(a)--(b) by induction;
 \item if $\Gamma=\nabla(\bar\Gamma)$ with tip $u$, either $u\in\calV'$ --- and hence $\Gamma'$ satisfies condition~(b) with the ordinary vertex $u$ ---, or $\Gamma'$ is an induced subdigraph of $\bar\Gamma$ --- and hence it satisfies one of the two conditions~(a)--(b) by induction.
\end{itemize}

Conversely, suppose that $\Gamma$ is a special digraph, and that every induced subdigraph satisfies one of the two conditions~(a)--(b).
In particular, $\Gamma$ itself satisfies one of the two conditions~(a)--(b).
\begin{itemize}
 \item If it satisfies condition~(a), then $\Gamma=\Gamma_1\:\dot\cup\:\Gamma_2$ for two proper subdigraphs $\Gamma_1,\Gamma_2$;
 \item if it satisfies condition~(b) with an ordinary vertex $u\in\calV$, and if $w\in\calV$ is a special vertex, then $(u,w)\in\calE$ is a special edge, as $w$ is a sinkhole (because $\Gamma$ is a special digraph), and hence $\Gamma=\nabla(\bar\Gamma)$ with tip $u$, where $\bar\Gamma$ is the induced subdigraph of $\Gamma$ whose vertices are $\calV\smallsetminus\{u\}$.
\end{itemize}
In both cases, the digraphs $\Gamma_1,\Gamma_2$, and $\bar\Gamma$, satisfy one of the two conditions~(a)--(b), and we may deconstruct them as done with $\Gamma$.
Altogether, iterating the disassembly of $\Gamma$, we see that $\Gamma$ satisfies Definition~\ref{defin:ET graph}.
\end{proof}

In analogy with special digraphs, one has the following characterization of digraphs of elementary type (cf. \cite[Prop.~2.14]{BQW}).

\begin{prop}\label{prop:ET graphs no subgraphs}
 Let $\Gamma=(\calV,\calE)$ be a special digraph.
 Then $\Gamma$ is of elementary type if, and only if, none of the following occurs as an induced subdigraph:
 \begin{itemize}
  \item[(a)] a graph with geometric realization
  \begin{equation}\label{eq:square line}
 \xymatrix@R=1.5pt{ x&y\\ \circ\ar@{-}[r] & \circ\ar@{-}[ddd] \\ \\ \\
 \circ\ar@{-}[r]\ar@{-}[uuu]  & \circ \\ w&z}
 \qquad\qquad\text{or}\qquad\qquad
 \xymatrix@R=1.5pt{ x&y&z&w \\ \circledast&\circ\ar@{-->}[l]\ar@{-}[r] & \circ\ar@{-->}[r]& \circledast}
 \end{equation}
--- here the two dotted arrows in the right-side diagram mean that $(y,x),(z,w)\in\calE$, and these two edges may be ordinary or special;
\item[(b)] a graph with geometric realization
\begin{equation}\label{eq:Lambdas}
 \xymatrix@R=1.5pt{z & x & y  \\ \circ\ar[r]& \bullet  & \circ\ar[l]}.
\end{equation}
 \end{itemize}
\end{prop}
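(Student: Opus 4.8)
The plan is to reformulate the statement via the preceding proposition: since $\Gamma$ is assumed to be a special digraph, being of elementary type is equivalent to the condition that every induced subdigraph of $\Gamma$ is either disconnected or has an \emph{ordinary} vertex adjacent to all the others. For the ``only if'' part, one just notes that each of the three forbidden configurations violates this condition as an induced subdigraph: the square in \eqref{eq:square line} and the diagram on its right have underlying undirected graph $C_4$, resp.\ $P_4$, so they are connected and have no universal vertex at all, while the configuration \eqref{eq:Lambdas} has as underlying graph a path on three vertices whose unique universal vertex is the central one, which is special. Hence if $\Gamma$ contained any of them, that induced subdigraph would contradict the preceding proposition.

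For the ``if'' part, assume $\Gamma$ is special and contains none of \eqref{eq:square line}--\eqref{eq:Lambdas}; both properties are inherited by induced subdigraphs, so by the preceding proposition it suffices to prove that every \emph{connected} special digraph $\Gamma$ avoiding \eqref{eq:square line}--\eqref{eq:Lambdas} has an ordinary vertex adjacent to all the others. I would argue by induction on $|\calV|$, the case $|\calV|=1$ being clear. The first step is to show that the underlying undirected graph $\bar\Gamma$ has no induced $C_4$ and no induced $P_4$. Indeed, suppose it has one, on a four-element vertex set $S$. If every edge of this $C_4$ or $P_4$ is ordinary, then $S$ realizes one of the two diagrams of \eqref{eq:square line} directly. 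Otherwise it carries a special edge, with head $w$; since $\Gamma$ is special, $w$ is a sinkhole, so every edge of the $C_4$/$P_4$ at $w$ points at $w$ along a special edge. Using this, together with the elementary observation that two adjacent vertices of a special digraph cannot both be special, a short case analysis --- on whether the special edge is internal or terminal in the $P_4$, resp.\ anywhere in the $C_4$ --- shows that $S$, or a three-element subset of it, realizes one of \eqref{eq:square line}--\eqref{eq:Lambdas}, a contradiction. Thus $\bar\Gamma$ is $\{P_4,C_4\}$-free, hence (classically) trivially perfect, and being connected it has a vertex $v$ adjacent to all the others.

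It then remains to arrange $v$ to be ordinary. If $v$ is special it is a sinkhole, so each remaining vertex is joined to $v$ by a special edge pointing at $v$; in particular every vertex other than $v$ is ordinary. Moreover $\Gamma-v$ must be connected, for two vertices lying in distinct components of $\Gamma-v$ would be ordinary, non-adjacent, and both joined to the special vertex $v$ by special edges pointing at $v$, i.e.\ they would realize \eqref{eq:Lambdas} together with $v$. Thus $\Gamma-v$ is a connected special digraph on fewer vertices avoiding \eqref{eq:square line}--\eqref{eq:Lambdas}; by the inductive hypothesis it has an ordinary vertex $u$ adjacent to all of its vertices, and since $v$ is adjacent to everything, $u$ is adjacent to every vertex of $\Gamma$. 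This closes the induction, and with it the argument.

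The step I expect to be the main obstacle is precisely the case analysis inside the first step: showing that an induced $C_4$ or $P_4$ in $\bar\Gamma$ carrying at least one special edge always contains one of \eqref{eq:square line}--\eqref{eq:Lambdas}. One has to track the orientations and the special/ordinary types of all the edges involved, repeatedly invoking the sinkhole condition and the impossibility of two adjacent special vertices, and --- in the $P_4$ case --- distinguish whether the special edge is internal (which forces the $\Lambda$ of \eqref{eq:Lambdas} on a three-vertex subset) or terminal (which, after propagating the constraints, forces the right-hand diagram of \eqref{eq:square line}). Everything else reduces to the basic facts about special digraphs recalled in \S\ref{ssec:special} and to the standard structure of trivially perfect graphs.
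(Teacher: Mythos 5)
Your argument is correct, and it is worth noting at the outset that the paper itself offers no proof of this proposition: it is quoted from \cite[Prop.~2.14]{BQW}, so there is no in-text argument to compare against. Your route --- reduce, via the reformulation of elementary type in the preceding proposition, to showing that every \emph{connected} special digraph avoiding \eqref{eq:square line} and \eqref{eq:Lambdas} has an ordinary dominating vertex; prove that the underlying undirected graph is $\{P_4,C_4\}$-free by a case analysis driven by the sinkhole condition and the fact that two adjacent vertices of a special digraph cannot both be special; invoke Wolk's theorem on connected $\{P_4,C_4\}$-free graphs to obtain a dominating vertex; and finally upgrade it to an ordinary dominating vertex by deleting a special dominating vertex (all of whose neighbours are then forced to be ordinary, and whose deletion preserves connectedness on pain of producing \eqref{eq:Lambdas}) --- is the standard one for Droms-type characterizations, and every step checks out. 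Two small remarks. First, the parenthetical in your last paragraph is slightly imprecise: a \emph{terminal} special edge of an induced $P_4$ may also point inward at the degree-two vertex, in which case it is its head that becomes a sinkhole and the configuration produced is the $\Lambda$ of \eqref{eq:Lambdas} on a three-vertex subset, not the right-hand diagram of \eqref{eq:square line}; the latter arises exactly when the middle edge is ordinary (which already forces both inner vertices to be ordinary and hence both end edges to have their inner endpoint as tail). Since every case still yields a forbidden configuration, this does not affect the conclusion. Second, the base case of your induction inherits a harmless ambiguity from the paper's own reformulation: a one-vertex induced subdigraph whose vertex is special in $\Gamma$ satisfies neither condition (a) nor condition (b) literally, so those conditions must be read as imposed only on subdigraphs with at least two vertices (single vertices being the bricks of $\calV_0$); with that convention your reduction and your base case are fine.
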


%%%%%%%%%%%%%%%%%%%%%%%%%%%%%%%%%%%%55
%%%%%%%%%%%55
%%%%%%%%%%%%%%%%%%%%%%%%%%%%%%%%%%%%%%%%%%%%%%%%%%%%%%%%%%%%%%%%%5
%%%%%%%%%%%55
%%%%%%%%%%%%%%%%%%%%%%%%%%%%%%%%%%%

%%%%%%%%%%%%%%%%%%%%%%%%%%%%%%%%%%%%55
\section{Oriented pro-$p$ RAAGs}\label{sec:RAAGs}

%%%%%%%%%%%%%%%%%%%%%%%%%%%%%%%%%%%%55
\subsection{Oriented pro-$p$ RAAGs and special digraphs}
Let $\Gamma=(\calV,\calE)$ be a digraph, and for a $p$-power $q$ --- henceforth we will always tacitly assume that $q=2^f$ with $f\geq2$ if $p=2$ --- let $G$ be the associated oriented pro-$p$ RAAG.
One may define an orientation $\theta_\Gamma\colon G\to1+p\Z_p$ by
\[ \theta_\Gamma(v)=\begin{cases}1+q & \text{if }v\text{ is a sinkhole}, \\
 1 & \text{if }v\text{ is not a sinkhole}.      \end{cases}\]
Hence, $\Img(\theta_\Gamma)\subseteq1+4\Z_2$ if $p=2$.

\begin{rem}\rm
Let $\Gamma=(\calV,\calE)$ be a digraph, and for a $p$-power $q$ let $G$ be the associated oriented pro-$p$ RAAG.
\begin{itemize}
 \item[(a)] There exists an orientation $\theta\colon G\to1+p\Z_p$ such that the natural maps \eqref{eq:def 1cyc} are surjective for every $n\geq1$, if, and only if, $\Gamma$ is a special digraph, and the unique orientation satisfying this property is $\theta_\Gamma$ (cf. \cite[Thm.~4.9]{BQW}).
\item[(b)] If $\Gamma$ is an undigraph, then every associated oriented pro-$p$ RAAG does not depend on the choice of $q$, and it is isomorphic to the pro-$p$ RAAG (i.e., the pro-$p$ completion of the discrete RAAG) associated to $\Gamma$ considered as a simplicial graph.
\end{itemize}
\end{rem}

\begin{lem}\label{lemma:specor RAAG torfree}
 Let $\Gamma=(\calV, \calE)$ be a digraph, and for some $p$-power $q$ let $G$ be the associated oriented pro-$p$ RAAG.
 \begin{itemize}
  \item[(i)] If a vertex $v\in\calV$ is an element of $G$ yielding non-trivial torsion, then $\Gamma$ is not a special digraph.
  \item[(ii)] If $\Gamma$ is a special digraph, $v_1,v_2\in\calV$ are disjoint vertices, and there exists a third vertex $w\in\calV$ which is joint to both $v_1,v_2$, then the subgroup of $G$ generated by $v_1,v_2$ is a 2-generated free pro-$p$ group.
 \end{itemize}

\end{lem}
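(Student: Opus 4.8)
The strategy is to work with the associated Lie algebra / graded objects, but a cleaner route is to use the known structure of oriented pro-$p$ RAAGs as iterated ``twisted'' amalgams and HNN-like extensions over the vertex generators, together with the characterization of special digraphs via the forbidden induced subdigraphs in \eqref{eq:subgraphs no special}.

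\emph{Part (i).} I would argue contrapositively in the following form: if $\Gamma$ is special, then every vertex $v\in\calV$ generates a free pro-$p$ subgroup of rank $1$ (hence is torsion-free) inside $G$. The key input is that for a special digraph, the orientation $\theta_\Gamma$ makes $(G,\theta_\Gamma)$ one of the oriented pro-$p$ groups for which the maps \eqref{eq:def 1cyc} are surjective (cf.\ the remark citing \cite[Thm.~4.9]{BQW}); in particular $G$ completes into an oriented pro-$p$ group with the ``1-smooth'' property on $G$ itself, which forces $G$ to be torsion-free by the general fact (recalled in \S\ref{ssec:fratres}, see Proposition~\ref{prop:1cyc fratres} and Example~\ref{ex:torsion}) that such groups have no torsion. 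Actually the sharper and more self-contained approach is to induct on $|\calV|$ using the elementary building blocks: if $\Gamma$ is special then, peeling off one vertex $v_0$, either $v_0$ is isolated (so $G = \langle v_0\rangle \times G'$ or $G = \langle v_0\rangle \amalg G'$ depending on adjacency data — but in the disjoint case $G$ is a free pro-$p$ product and in the joined case one checks $v_0$ acts on the subgroup generated by the rest via the relations), or $v_0$ is non-special and joined to everything, making $G$ an extension of $\langle v_0\rangle$; in each case the subgroup $\langle v\rangle$ for a vertex $v\neq v_0$ survives into a torsion-free factor by induction, and $\langle v_0\rangle$ itself is a copy of $\Z_p$ because the defining relations never impose $v_0^{p^k}=1$. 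The contrapositive of this is exactly statement~(i): torsion in some $v$ forces one of the forbidden configurations of \eqref{eq:subgraphs no special} to appear.

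\emph{Part (ii).} Here $\Gamma$ is special, $v_1,v_2$ are non-adjacent, and some $w$ is adjacent to both. The plan is to produce an explicit retraction of $G$ onto the oriented pro-$p$ RAAG $G_0$ associated to the induced subdigraph $\Gamma_0$ on $\{v_1,v_2,w\}$: the map sending every vertex not in $\{v_1,v_2,w\}$ to $1$ and fixing $v_1,v_2,w$ respects all defining relations (each relation involves either two adjacent vertices, killed or preserved consistently), so $\langle v_1,v_2\rangle$ in $G$ surjects onto, and is a retract of, $\langle v_1,v_2\rangle$ in $G_0$. Thus it suffices to treat $\Gamma_0$ on three vertices. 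Since $v_1,v_2$ are disjoint and $w$ is joined to both, and since $\Gamma$ is special, the only configurations on $\{v_1,v_2,w\}$ allowed are those where neither $v_1$ nor $v_2$ is a special vertex whose edge to $w$ is ``wrong'' — a quick case-check through the seven forbidden triples in \S\ref{ssec:special} shows that in every admissible case $G_0$ is the (oriented) RAAG where $v_1,v_2$ commute with nothing relating them to each other, i.e.\ $\langle v_1,v_2\rangle$ is the free pro-$p$ product $\langle v_1\rangle \amalg \langle v_2\rangle \cong \Z_p * \Z_p$ — the free pro-$p$ group of rank $2$. Concretely, $G_0$ is an amalgam/extension in which $\langle v_1,v_2\rangle$ embeds as a free pro-$p$ subgroup: one verifies this by checking that the subgroup generated by $v_1,v_2$ meets the normal closure of $w$ trivially, using that the relations pin $v_1,v_2$ only through conjugation by $w$ (or commutation with $w$), never to each other.

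\emph{Main obstacle.} The delicate point is Part~(ii): to conclude that $\langle v_1,v_2\rangle$ is genuinely \emph{free} of rank exactly $2$ — and not merely generated by two elements — I must show the only relations among $v_1,v_2$ inside $G$ are consequences of the empty set of relations, i.e.\ that no hidden relation is forced through the third vertex $w$ and its interactions. The retraction argument reduces this to the three-vertex case, but there one still needs a clean normal-form or Magnus-type embedding argument for the small oriented pro-$p$ RAAG $G_0$; handling the sub-case where $v_1$ (or $v_2$) is special requires care because the relation $w v_i w^{-1} = v_i^{1+q}$ is non-trivial, and one must check this still does not entangle $v_1$ with $v_2$. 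I expect this to follow from realizing $G_0$ as an HNN extension with stable letter $w$ over the free pro-$p$ group $\langle v_1\rangle \amalg \langle v_2\rangle$, whose base subgroup is therefore free of rank $2$ as claimed.
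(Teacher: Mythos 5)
There is a genuine gap in both parts. For part (ii), your central reduction step --- a retraction of $G$ onto the oriented pro-$p$ RAAG $G_0$ of the induced subdigraph on $\{v_1,v_2,w\}$ obtained by killing all other vertices --- does not exist in the presence of special edges. If $w'\notin\{v_1,v_2,w\}$ and $(w',v_1)$ is a special edge (which is forced whenever $v_1$ is a special vertex of a special digraph and $w'$ is adjacent to it), the relation $w'v_1w'^{-1}=v_1^{1+q}$ maps under your assignment $w'\mapsto 1$, $v_1\mapsto v_1$ to $v_1=v_1^{1+q}$, i.e.\ $v_1^q=1$ in $G_0$, which is false since $v_1$ has infinite order there. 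Already for the two-vertex digraph with one special edge, $G=\Z_p\rtimes\Z_p$ admits no retraction onto the $\Z_p$ generated by the special vertex. What is true --- and is exactly what the paper invokes, via \cite[Prop.~4.11]{BQW} --- is that for a \emph{special} digraph the inclusion of an induced subdigraph induces a (generally non-split) monomorphism of the associated oriented pro-$p$ RAAGs; a monomorphism going the other way cannot be replaced by your retraction. Even after a correct reduction to three vertices, the freeness of $\langle v_1,v_2\rangle$ still requires a pro-$p$ tree or normal-form argument of the kind carried out in Lemma~\ref{lem:open triangle free subgroup} (the paper simply cites \cite[Lemma~6.4]{BQW} for this); your HNN remark points in the right direction but is not carried out.

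For part (i), your ``self-contained'' induction rests on a false dichotomy: a special digraph need not have a vertex that is isolated or adjacent to all others --- that decomposition characterizes digraphs of \emph{elementary type}, not special digraphs (e.g.\ the line-undigraph in \eqref{eq:no ET RAAG} is special but admits no such vertex). Your alternative route via surjectivity of the maps \eqref{eq:def 1cyc} is also not justified by the facts you cite: Proposition~\ref{prop:1cyc fratres} takes torsion-freeness as a \emph{hypothesis}, and Example~\ref{ex:torsion} only shows torsion obstructs Frattini-resistance; neither yields that the Kummerian property of $(G,\theta_\Gamma)$ forces every vertex to have infinite order. The paper's proof of (i) is the one-line observation that, for $\Gamma$ special, the induced subdigraph on $\{v\}$ gives $\Z_p$ and embeds into $G$ by \cite[Prop.~4.11]{BQW}; some input of this kind (induced subdigraphs of special digraphs yield subgroups) is the missing ingredient in your argument.
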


\begin{proof}
\begin{itemize}
 \item[(i)] Suppose that $\Gamma$ is a special digraph, and let $\Gamma_v=(\{v\},\varnothing)$ be the induced subdigraph of $\Gamma$ whose only vertex is $v$.
The oriented pro-$p$ RAAG associated to $\Gamma_v$ is the free pro-$p$-cyclic group $\langle v\rangle\simeq\Z_p$.
By \cite[Prop.~4.11]{BQW}, the inclusion $\{v\}\hookrightarrow\calV$ induces a monomorphism of pro-$p$ groups $\langle v\rangle\hookrightarrow G$, and therefore $v$ is a torsion-free element of $G$.
\item[(ii)] This statement is \cite[Lemma~6.4]{BQW}.
\end{itemize}
\end{proof}

%%%%%%%%%%%%%%%%%%%%%%%%%%%%%%%%%%%%%%%%5

\subsection{Oriented pro-$p$ RAAGs and torsion}\label{ssec:RAAGs torsion}

The following examples, dealing with oriented pro-$p$ RAAGs associated to digraphs that are not special, will be useful for the proof of Theorem~\ref{thm:main intro}.

\begin{exam}\label{ex:Mennike}\rm
 Let $\Gamma=(\calV,\calE)$ be the non-special digraph with geometric representation
 \[  \xymatrix@R=1.5pt{ & y & \\
& \bullet\ar@/^/[dddr] &  \\ \\ \\  \bullet\ar@/^/[uuur] && \bullet\ar@/^/[ll]  \\  x& &z }\]
The oriented pro-$p$ RAAG associated to $\Gamma$ and to a $p$-power $q$ is
\[
 G=\left\langle\:x,y,z\:\mid\:[x,y]=y^q,[y,z]=z^q,[z,x]=x^q,\:\right\rangle,
\]
which is a finite $p$-group, as shown by J.~Mennike (cf. \cite[ Ch.~I, \S~4.4, Ex.~2(e)]{serre:galc}).
In particular, the oriented pro-$p$ RAAG associated to a single vertex (which is isomorphic to $\Z_p$) is not a subgroup of $G$ (cf. \cite[Ex.~4.7]{BQW}).
\end{exam}

\begin{exam}\label{ex:triangletor}\rm
 Let $\Gamma_1=(\calV,\calE_1)$ and $\Gamma_2=(\calV,\calE_2)$ be the non-special digraph with geometric representation respectively
 \[  \xymatrix@R=1.5pt{ & x & \\
& \bullet &  \\ \\ \\  \bullet\ar@/^/[uuur] && \circ\ar@/_/[uuul]\ar@/^/[ll]  \\  y& &z }\qquad\text{and}\qquad
\xymatrix@R=1.5pt{ & x & \\
& \bullet &  \\ \\ \\  \bullet\ar@/^/[uuur] && \circ\ar@{-}@/_/[uuul]\ar@/^/[ll]  \\  y& &z }
\]
Given a $p$-power $q$, the oriented pro-$p$ RAAG associated to $q$ and to one of the two graphs is
\[
 G=\left\langle\:x,y,z\:\mid\:[x,y]=y^q,\:[y,z]=z^q,\:[x,z]=z^{\epsilon q}\:\right\rangle,
\]
where $\epsilon=1$ if $G$ is associated to $\Gamma_1$, and $\epsilon=0$ if $G$ is associated to $\Gamma_2$.
Then $[y^q,z]=z^{(1+q)^q-1}$; on the other hand, one computes
\[\begin{split}
\left[[y^q,z]\right]=\left[xyx^{-1}y^{-1},z\right] &=
x\left(y\left(x^{-1}\left(y^{-1}zy\right)x\right)y^{-1}\right)x^{-1}\cdot z^{-1} \\
&= \left(\left(\left(z^{(1+q)^{-1}}\right)^{\epsilon(1+q)^{-1}}\right)^{1+q}\right)^{\epsilon(1+q)}\cdot z^{-1}\\
&=z^1\cdot z^{-1}=1.
\end{split}\]
Since $(1+q)^q-1=(1+q^2+q^3(q-1)/2+\ldots)-1\neq0$, in both cases $z$ yields non-trivial torsion.
In particular, the oriented pro-$p$ RAAG associated to the digraph $\Gamma'=(\{z\},\varnothing)$ is not a subgroup of $G$.
\end{exam}

By Example~\ref{ex:torsion}, the oriented pro-$p$ RAAGs of the above examples are not Frattini-injective nor Frattini-resistant.

\begin{rem}\label{rem:analytic}\rm
One could prove that the oriented pro-$p$ RAAGs in Examples~\ref{ex:Mennike}--\ref{ex:triangletor} are not Frattini-injective using \cite[Thm.~1.2]{ilirslobo:fratres}, as these pro-$p$ groups are {\sl $p$-adic analytic} (cf., e.g., \cite[\S~3]{ilirslobo:fratres}).
\end{rem}

%%%%%%%%%%%%%%%%%%%%%%%%%%%%%%%%%%%%55
\subsection{Oriented pro-$p$ RAAGs of elementary type}

In analogy with Proposition~\ref{prop:ET graphs no subgraphs}, one may characterize oriented pro-$p$ RAAGs associated to digraphs of elementary type in terms of subgroups (cf. \cite[Prop.~6.3]{BQW}).

\begin{prop}\label{prop:ET RAAGs}
 Let $\Gamma=(\calV,\calE)$ be a special digraph, and for some $p$-power $q$ let $G$ be the associated oriented pro-$p$ RAAG.
 Then $\Gamma$ is of elementary type if, and only if, $G$ has no subgroups isomorphic to the oriented pro-$p$ RAAGs associated to a digraph with geometric realization as in \eqref{eq:Lambdas}, or
  \begin{equation}\label{eq:no ET RAAG}
 \xymatrix@R=1.5pt{ x&y \\ \circ\ar@{-}[r] & \circ\ar@{-}[ddd]  \\ \\ \\
 \circ\ar@{-}[r]\ar@{-}[uuu]  & \circ \\ w&z}
 \qquad\qquad\text{or}\qquad\qquad
 \xymatrix@R=1.5pt{ x&y&z&w \\\circ\ar@{-}[r] & \circ\ar@{-}[r] & \circ\ar@{-}[r]  & \circ}
 \end{equation}
\end{prop}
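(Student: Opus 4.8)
The plan is to transfer the forbidden‑subdigraph criterion of Proposition~\ref{prop:ET graphs no subgraphs} to the group level. The two ingredients I would use are: (1) the fact from \cite{BQW} --- already invoked in the proof of Lemma~\ref{lemma:specor RAAG torfree} --- that the inclusion of an induced subdigraph $\Gamma'\hookrightarrow\Gamma$ induces a monomorphism $G_{\Gamma'}\hookrightarrow G$ between the associated oriented pro-$p$ RAAGs; and (2) the ``Droms property'' of oriented pro-$p$ RAAGs of elementary type, namely that every topologically finitely generated closed subgroup of such a group is again an oriented pro-$p$ RAAG of elementary type (this is part of the structure theory developed in \cite{BQW}; compare Corollary~\ref{cor:droms}), together with the fact that a special digraph is recovered from its oriented pro-$p$ RAAG.

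First I would prove the implication ``$\Gamma$ not of elementary type $\Rightarrow$ $G$ has such a subgroup''. Assuming $\Gamma$ is special but not of elementary type, Proposition~\ref{prop:ET graphs no subgraphs} produces an induced subdigraph $\Gamma'$ equal to the $4$-cycle undigraph, to the digraph~\eqref{eq:Lambdas}, or to a $4$-vertex path whose middle edge is ordinary and whose two outer edges may be ordinary or special; by ingredient~(1), $G_{\Gamma'}$ embeds into $G$. When $\Gamma'$ is the $4$-cycle undigraph, the digraph~\eqref{eq:Lambdas}, or the $4$-path with both outer edges ordinary, $G_{\Gamma'}$ is already one of the three oriented pro-$p$ RAAGs in the statement and we are done; when some outer edge of the $4$-path is special, one exhibits one of those three groups inside $G_{\Gamma'}$ by a direct computation in its explicit presentation (cf.\ \cite[Prop.~6.3]{BQW}). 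Hence in all cases $G$ contains a subgroup isomorphic to the oriented pro-$p$ RAAG of~\eqref{eq:Lambdas} or of~\eqref{eq:no ET RAAG}.

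For the converse, suppose $\Gamma$ is of elementary type and, for contradiction, that $G$ has a subgroup $H$ isomorphic to the oriented pro-$p$ RAAG of~\eqref{eq:Lambdas} or of~\eqref{eq:no ET RAAG}. By ingredient~(2), $H$ is an oriented pro-$p$ RAAG associated to some digraph of elementary type; but each of the digraphs in~\eqref{eq:Lambdas} and~\eqref{eq:no ET RAAG} is itself one of the forbidden induced subdigraphs of Proposition~\ref{prop:ET graphs no subgraphs} (the $4$-path being the ``decorated'' $4$-path with ordinary outer edges), so it is not of elementary type; since the oriented pro-$p$ RAAG of a special digraph determines the digraph, this is a contradiction. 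Combining the two implications gives the stated equivalence.

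The step I expect to be most delicate is the treatment of the $4$-paths with a special outer edge in the first implication: matching these decorated digraphs against the three undecorated groups listed in the statement requires an explicit analysis of centralizers, abelian pro-$p$ subgroups, and free pro-$p$ subgroups inside $G_{\Gamma'}$. The proof of ingredient~(2), the Droms property for the elementary-type class, is of comparable difficulty; it would proceed by induction on the construction of $\Gamma$ via Definition~\ref{defin:ET graph}, using that disjoint unions correspond to free pro-$p$ products --- so that the pro-$p$ Kurosh subgroup theorem applies --- and that cones correspond to semidirect products with $\Z_p$.
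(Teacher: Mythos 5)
First, a point of reference: the paper does not prove this proposition at all --- it is quoted from \cite[Prop.~6.3]{BQW} --- so your attempt must stand as an independent proof, and as such it has two genuine gaps. The serious one is in the forward direction. Proposition~\ref{prop:ET graphs no subgraphs} only guarantees an induced subdigraph which is the square undigraph, the digraph \eqref{eq:Lambdas}, or a $4$-path whose \emph{outer edges may be special}; when an outer edge is special (say the path $x\leftarrow y - z - w$ with $(y,x)$ special), the associated oriented pro-$p$ RAAG is \emph{not} one of the three groups listed in the statement, and producing one of those three groups inside it is precisely the mathematical content of the proposition. You defer exactly this step to ``a direct computation (cf.\ \cite[Prop.~6.3]{BQW})'', i.e.\ to the very result being proved, which is circular --- and the step is genuinely delicate, not routine. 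Indeed, that group decomposes as an amalgam $(\Z_p\rtimes\Z_p)\ast_{\Z_p}(F\times\Z_p)$ with $F$ free pro-$p$ of rank $2$; one checks that every rank-$2$ free abelian subgroup is conjugate into $\langle y,z\rangle$ or $\langle z,w\rangle$, that the relevant centralizers all land inside a conjugate of $\langle y,z,w\rangle\simeq F\times\Z_p$, and that the only procyclic subgroups admitting the ``raise to the $(1+q)$-th power'' automorphism by conjugation are the conjugates of $\langle x\rangle$, whose normalizer is metabelian. These observations defeat every naive attempt to locate the $4$-path group, the square group, or the group of \eqref{eq:Lambdas} via the centralizer/abelian-subgroup analysis you propose, so the embedding (or whatever argument \cite{BQW} actually uses for the decorated paths) must be exhibited explicitly; without it the forward implication is unproved.

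The converse direction also leans on two things you do not justify. The ``Droms property'' in the strengthened form you need (every finitely generated closed subgroup of an elementary-type oriented pro-$p$ RAAG is again an oriented pro-$p$ RAAG \emph{of elementary type}) does follow from Theorem~\ref{thm:BQW} by applying (i)$\Rightarrow$(ii) to $G$ and then (ii)$\Rightarrow$(i) to the subgroup; and the rigidity claim ``the oriented pro-$p$ RAAG of a special digraph determines the digraph'', which is established nowhere in this paper, can and should be avoided by the same device, since condition~(ii) of Theorem~\ref{thm:BQW} is an isomorphism invariant of the group and already shows that no group can be an oriented pro-$p$ RAAG of both an elementary-type and a non-elementary-type digraph. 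But note that Theorem~\ref{thm:BQW} is the main theorem of \cite{BQW}, whose proof there passes through the very Prop.~6.3 you are reconstructing; so as an independent proof your converse is circular as well, although it is an acceptable derivation if one takes Theorem~\ref{thm:BQW} as a black box.
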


The main theorem of \cite{BQW} states the following.

\begin{thm}\label{thm:BQW}
 Let $\Gamma=(\calV,\calE)$ be a digraph, and for $q=p^f$ {\rm(}with $f\geq2$ if $p=2${\rm)} let $G$ be the oriented pro-$p$ RAAG associated to $\Gamma$ and to $q$.
 The following are equivalent.
 \begin{itemize}
  \item[(i)] $\Gamma$ is a digraph of elementary type.
  \item[(ii)] Every finitely generated subgroup of $G$ occurs as an oriented pro-$p$ RAAG associated to some digraph.
  \item[(iii)] $G$ may complete into a 1-cyclotomic oriented pro-$p$ group.
  \item[(iv)] $G$ occurs as the maximal pro-$p$ Galois group of a field containing a root of $1$ of order $p$, and $\theta_\Gamma$ coincides with the pro-$p$ cyclotomic character.
 \end{itemize}
\end{thm}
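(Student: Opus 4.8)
The plan is to deduce the four conditions from one another through the chain of implications (i)$\Rightarrow$(ii)$\Rightarrow$(i) together with (i)$\Rightarrow$(iv)$\Rightarrow$(iii)$\Rightarrow$(i), the implication (iv)$\Rightarrow$(iii) being immediate from Example~\ref{exam:GK} once one notes that, under the identification in~(iv), the pair $(G,\theta_\Gamma)$ is $(G_\K(p),\theta_\K)$. The first step is a reduction: by Lemma~\ref{lemma:specor RAAG torfree}(i) together with Examples~\ref{ex:Mennike}--\ref{ex:triangletor}, if $\Gamma$ is not a special digraph then some vertex of $\calV$ yields torsion in $G$, so $G$ is not torsion-free and hence satisfies none of (ii), (iii) or (iv) --- each of which forces torsion-freeness. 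Thus in proving (ii)$\Rightarrow$(i) and (iii)$\Rightarrow$(i) we may assume $\Gamma$ special, which brings Propositions~\ref{prop:ET graphs no subgraphs} and~\ref{prop:ET RAAGs} into play; and when $\Gamma$ is of elementary type, Definition~\ref{defin:ET graph} exhibits $G$ as built from copies of $\Z_p$ by iterating two operations: the free pro-$p$ product (corresponding to disjoint union of digraphs) and the ``coning'' $N\mapsto N\rtimes_{\phi}\Z_p$, where $\phi$ is the automorphism of $N$ fixing the ordinary generators and raising each special generator to its $(1+q)$-th power --- equivalently, the extension of the oriented pro-$p$ group $(N,\theta_N)$ by a trivially oriented $\Z_p$ acting through $\theta_N$.

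For (i)$\Rightarrow$(ii) I would induct on the number of operations used to build $\Gamma$, the base case $G\cong\Z_p$ being clear. When $\Gamma=\Gamma_1\:\dot\cup\:\Gamma_2$ one has $G=G_1\amalg G_2$ with $G_1,G_2$ oriented pro-$p$ RAAGs of elementary type; by the pro-$p$ Kurosh subgroup theorem a finitely generated closed subgroup of $G$ is a free pro-$p$ product of a finitely generated free pro-$p$ group --- an oriented pro-$p$ RAAG on an edgeless digraph --- and of finitely generated subgroups of conjugates of $G_1$ and $G_2$, which are oriented pro-$p$ RAAGs by the inductive hypothesis, and a free pro-$p$ product of oriented pro-$p$ RAAGs is the oriented pro-$p$ RAAG on the disjoint union of the digraphs. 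When $\Gamma=\nabla(\bar\Gamma)$, so $G=\bar G\rtimes_{\phi}\Z_p$ with $\bar G$ an oriented pro-$p$ RAAG of elementary type, I would analyse a finitely generated $H\leq G$ through the canonical projection $H\to\Z_p$: if it is trivial then $H\leq\bar G$ and one concludes by induction; otherwise $H=(H\cap\bar G)\rtimes\Z_p$, and the crux is to show that $H\cap\bar G$ is finitely generated, stable under a power of $\phi$, and again an oriented pro-$p$ RAAG of elementary type, so that $H$ is the oriented pro-$p$ RAAG obtained by coning the corresponding digraph. Throughout one checks, via Lemma~\ref{lemma:specor RAAG torfree}, that $G$ remains torsion-free (indeed absolutely torsion-free).

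The converse implications (ii)$\Rightarrow$(i) and (iii)$\Rightarrow$(i) are proved by contraposition using Proposition~\ref{prop:ET RAAGs}: if $\Gamma$ is special but not of elementary type, then $G$ contains a finitely generated subgroup isomorphic to the oriented pro-$p$ RAAG on the square $C_4$ --- which is the pro-$p$ completion of $F_2\times F_2$ --- on the path $P_4$, or on the digraph $\Lambda$ of~\eqref{eq:Lambdas}. For~(ii) one then exhibits, inside such a subgroup, a further finitely generated subgroup which is not an oriented pro-$p$ RAAG: for $C_4$ and $P_4$ these are the pro-$p$ avatars of Droms's classical examples --- kernels of suitable homomorphisms onto $\Z_p$, which are finitely generated but fail the finiteness properties of oriented pro-$p$ RAAGs --- and for $\Lambda$ one uses the subgroup structure of $\Z_p\rtimes\hat{F}_2(p)$; this contradicts~(ii). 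For~(iii): by \cite[Thm.~4.9]{BQW} a 1-cyclotomic orientation of $G$ must be $\theta_\Gamma$ and forces $\Gamma$ special, after which $G$ --- hence every one of its subgroups --- is Frattini-resistant by Proposition~\ref{prop:1cyc fratres} (recall $\Img(\theta_\Gamma)\subseteq 1+4\Z_2$ if $p=2$); since none of the three oriented pro-$p$ RAAGs above is Frattini-resistant --- for $C_4$ this is Proposition~\ref{prop:directproducts}, the other two being checked directly --- Proposition~\ref{prop:ET RAAGs} again forces $\Gamma$ to be of elementary type.

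Finally, for (i)$\Rightarrow$(iv) I would realize $G$ as a maximal pro-$p$ Galois group by induction on the elementary-type construction, maintaining at each stage that the pro-$p$ cyclotomic character is $\theta_\Gamma$: the group $\Z_p$ with a prescribed orientation is the maximal pro-$p$ Galois group of a suitable field containing $\zeta_p$; the class of pairs $(G_\K(p),\theta_\K)$ with $\zeta_p\in\K$ is closed under free pro-$p$ products amalgamated along the cyclotomic characters; and it is closed under the coning operation $(N,\theta_N)\mapsto(N\rtimes_{\theta_N}\Z_p,\widetilde{\theta_N})$, realized by a suitable field built from a field realizing $(N,\theta_N)$. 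This last point is, to my mind, the main obstacle, in its two guises: the subgroup analysis of the mapping-torus-type pro-$p$ groups $N\rtimes_{\phi}\Z_p$ needed for (i)$\Rightarrow$(ii), and the field-theoretic realization of the coning operation needed for (i)$\Rightarrow$(iv) --- note that the naive candidate $\K\mapsto\K((t))$ gives $\Z_p\rtimes N$ with the inertia $\Z_p$ normal, which by a direct computation of abelianizations is \emph{not} isomorphic to $N\rtimes\Z_p$, so a genuinely different construction is required.
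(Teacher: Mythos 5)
First, a point of orientation: the paper does not prove this statement at all --- it is introduced with the words ``The main theorem of \cite{BQW} states the following'' and is used as a black box (only the implication (i)$\Rightarrow$(ii) of Theorem~\ref{thm:main intro} and the two corollaries are derived from it). So there is no proof in this paper to compare yours against; what you have written is a reconstruction of the argument of \cite{BQW}, and it has to be judged as such. Your logical architecture of implications is fine, (iv)$\Rightarrow$(iii) via Example~\ref{exam:GK} is correct, and your closing observation that $\K((t))$ realizes $\Z_p(\theta)\rtimes G_{\K}(p)$ (inertia normal) rather than $G_{\K}(p)\rtimes\Z_p$ is a genuine and well-taken point. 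But precisely because of that observation, the two steps you yourself flag as ``the crux'' --- the structure of finitely generated closed subgroups of the mapping torus $N\rtimes_\phi\Z_p$ for (i)$\Rightarrow$(ii), and the field-theoretic realization of the cone for (i)$\Rightarrow$(iv) --- are exactly where the entire content of \cite{BQW} lives, and your sketch offers no argument for either. As it stands these are announced obstacles, not proved steps.

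There is also one concretely false claim. You assert that if $\Gamma$ is not a special digraph then some vertex yields torsion in $G$, and you use this to reduce (ii)$\Rightarrow$(i) to the special case. Lemma~\ref{lemma:specor RAAG torfree}(i) gives only the converse direction (torsion at a vertex implies $\Gamma$ non-special), and the implication you need fails: the line digraphs of \eqref{eq:triangle open proof} (e.g.\ $z\to x$, $x$---$y$) and the triangles of \eqref{eq:triangle notorsion proof} are non-special, yet the associated groups can be torsion-free --- indeed \S\ref{sec:nonspecial} of this very paper treats those cases under the explicit standing hypothesis ``we may suppose that $G$ is torsion-free,'' and has to work (via the pro-$p$ tree argument of Lemma~\ref{lem:open triangle free subgroup}, resp.\ the $\Z_p\rtimes\Z_p^2$ computation) precisely because torsion is not available there. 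Consequently your proof of (ii)$\Rightarrow$(i) is silent on the non-special torsion-free configurations: for those you would need to exhibit a finitely generated subgroup that is not an oriented pro-$p$ RAAG (for the metabelian triangle case one can argue via $p$-adic analytic dimension versus generator number; for the line digraphs something more is needed), and none of this is in your text. A smaller caveat: your list of Droms-type obstructions for (ii) in the $C_4$, $P_4$ and $\Lambda$ cases is plausible but again only gestured at, and the claim that the relevant kernels ``fail the finiteness properties of oriented pro-$p$ RAAGs'' needs an actual invariant to be checked.
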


Hence, from Theorem~\ref{thm:BQW} and from Proposition~\ref{prop:1cyc fratres}, one deduces the implication (i)$\Rightarrow$(ii) of Theorem~\ref{thm:main intro}.

\begin{cor}
  Let $\Gamma=(\calV,\calE)$ be a digraph of elementary type, and for $q=p^f$ {\rm(}with $f\geq2$ if $p=2${\rm)} let $G$ be the oriented pro-$p$ RAAG associated to $\Gamma$ and to $q$.
 Then $G$ is Frattini-resistant.
\end{cor}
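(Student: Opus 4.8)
The plan is to obtain the corollary as a direct composition of the two results quoted immediately above, so that the task reduces to checking hypotheses. Since $\Gamma$ is of elementary type, the implication (i)$\Rightarrow$(iii) of Theorem~\ref{thm:BQW} provides an orientation $\theta$ for which $(G,\theta)$ is a $1$-cyclotomic oriented pro-$p$ group; and since $\Gamma$ is in particular a special digraph, the only orientation of $G$ for which the maps \eqref{eq:def 1cyc} can be surjective for all $n$ is $\theta_\Gamma$, whence $\theta=\theta_\Gamma$. So $(G,\theta_\Gamma)$ is $1$-cyclotomic, and it remains to verify the two remaining hypotheses of Proposition~\ref{prop:1cyc fratres}: that $G$ is torsion-free, and that $\Img(\theta_\Gamma)\subseteq 1+4\Z_2$ if $p=2$.

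The condition on the image needs nothing new: by construction $\Img(\theta_\Gamma)\subseteq 1+q\Z_p$, and for $p=2$ we have $q=2^f$ with $f\geq2$, so $q\Z_2\subseteq 4\Z_2$ --- this is precisely the observation already recorded in the paragraph introducing $\theta_\Gamma$. For torsion-freeness I would use the implication (i)$\Rightarrow$(iv) of Theorem~\ref{thm:BQW}: $G$ is isomorphic to the maximal pro-$p$ Galois group of a field containing a root of $1$ of order $p$, and such pro-$p$ groups are torsion-free (cf.\ Example~\ref{exam:GK}). Alternatively one can argue intrinsically, since a digraph of elementary type is built from single vertices by iterated disjoint unions and cones, so that $G$ is assembled from copies of $\Z_p$ by iterated free pro-$p$ products and coning extensions --- operations which preserve torsion-freeness; this is consistent with Lemma~\ref{lemma:specor RAAG torfree}.

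With both hypotheses at hand, Proposition~\ref{prop:1cyc fratres} applies to $(G,\theta_\Gamma)$ and gives that $G$ is strongly Frattini-resistant, hence a fortiori Frattini-resistant, as asserted.

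The argument is essentially bookkeeping over earlier results, so there is no substantial obstacle. The two points deserving a moment's care are the torsion-freeness of $G$ --- settled by either route above --- and the absence of circularity, namely that the proof of Theorem~\ref{thm:BQW} in \cite{BQW} does not itself rely on Frattini-resistance of $G$; it does not, being carried out by independent combinatorial and cohomological means. I expect the final write-up to occupy only a few lines.
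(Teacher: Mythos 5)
Your proposal is correct and follows exactly the paper's route: the corollary is obtained by combining Theorem~\ref{thm:BQW} (elementary type implies $G$ completes into a 1-cyclotomic oriented pro-$p$ group) with Proposition~\ref{prop:1cyc fratres}. You are in fact slightly more careful than the paper's one-line deduction, in that you explicitly verify the torsion-freeness and the $p=2$ image hypothesis of Proposition~\ref{prop:1cyc fratres}; both verifications are sound and non-circular.
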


Also, Corollary~\ref{cor:droms} follows by Theorem~\ref{thm:main intro} and the equivalence (i)$\Leftrightarrow$(ii) of Theorem~\ref{thm:BQW}, while Corollary~\ref{cor:1cyc} follows by Theorem~\ref{thm:main intro} and the equivalence (i)$\Leftrightarrow$(iii)$\Leftrightarrow$(iv) of Theorem~\ref{thm:BQW}.

To prove the implication (ii)$\Rightarrow$(i) of Theorem~\ref{thm:main intro}, we will proceed with a case-by-case analysis, showing --- in the next two sections --- that every induced subdigraph which prevents a digraph from being of elementary type gives rise to pro-$p$ groups that are not Frattini-resistant.

%%%%%%%%%%%%%%%%%%%%%%%%%%%%%%%%%%%%55
%%%%%%%%%%%55
%%%%%%%%%%%%%%%%%%%%%%%%%%%%%%%%%%%%%%%%%%%%%%%%%%%%%%%%%%%%%%%%%5
%%%%%%%%%%%55
%%%%%%%%%%%%%%%%%%%%%%%%%%%%%%%%%%%

%%%%%%%%%%%%%%%%%%%%%%%%%%%%%%%%%%
%%%%%%%%%%%%%%%%%%%%%%%%%%%%%%%%%%%

\section{Non-special digraphs and Frattini-resistance}\label{sec:nonspecial}
The goal of this section is to prove the following.

\begin{prop}\label{prop:fratres specially}
Let $\Gamma=(\calV,\calE)$ be a digraph, and for a $p$-power $q$ let $G$ be the
associated oriented pro-$p$ RAAG.
If $G$ is Frattini-resistant, then $\Gamma$ is a special digraph.
\end{prop}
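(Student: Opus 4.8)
The plan is to prove the contrapositive: assuming $\Gamma$ is not a special digraph, one produces a finitely generated subgroup $H\le G$ and an element $x\in G$ with $x^p\in\Phi(H)$ but $x\notin H$, so that $G$ is not Frattini-resistant by Proposition~\ref{prop:fratres commres}. By the discussion in \S\ref{ssec:special}, a digraph fails to be special exactly when it contains, as an induced subdigraph, one of the seven digraphs on three vertices described in~\eqref{eq:subgraphs no special}. So I would fix such a three-vertex induced subdigraph $\Gamma_0\subseteq\Gamma$, on vertices $\{x,y,z\}$, and argue case by case according to which of the seven $\Gamma_0$ is.

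Two observations make the reduction to $\Gamma_0$ usable inside the (possibly much larger) group $G$. First, since $\Gamma_0$ is an \emph{induced} subdigraph, every defining relation of the oriented pro-$p$ RAAG of $\Gamma_0$ is already a relation of $G$; hence any identity derived purely from those three relations --- a torsion relation for a vertex, or a relation of the form $w^p=[a,b]$ --- holds verbatim in $G$. Second, every vertex is nontrivial in $G$: the defining relations of an oriented pro-$p$ RAAG lie in $\Phi(G)$ (a special-edge relation puts a power $v^q$ into $G^p$, using $q\ge p$, and an ordinary-edge relation puts a commutator into $G'$), so $G/\Phi(G)$ has $\calV$ as a $\Z/p$-basis, whence $v\ne1$ for every $v\in\calV$.

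I would then split the seven configurations into two groups. For the three in which $\Gamma_0$ is a triangle carrying two consecutive special edges --- these are precisely the digraphs of Examples~\ref{ex:Mennike} and~\ref{ex:triangletor} --- the commutator computations recorded there, which use only the three defining relations, force one vertex, say $z$, to satisfy $z^{p^m}=1$ for some $m\ge1$; combined with $z\ne1$ in $G$ and Example~\ref{ex:torsion}, this shows $G$ is not Frattini-resistant, for \emph{any} $\Gamma$ containing such a $\Gamma_0$. For the remaining four configurations the group of $\Gamma_0$ is torsion-free, and the Frattini-witness has to be exhibited by hand. In each of these $\Gamma_0$ contains a special edge $z\to x$, so $[z,x]=x^q$ holds in $G$, and $x$ is joined to $y$ by an ordinary edge ($[x,y]=1$) or a special edge $x\to y$ ($[x,y]=y^q$). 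I would take $w$ to be a suitable power of $x$ (or of $y$) and $H$ to be generated by one or two suitable products of $x,y,z$, chosen so that $[z,x]=x^q$ together with the $x$--$y$ edge (and with any $z$--$y$ relation present in $\Gamma_0$) places $w^p$ in $H'\subseteq\Phi(H)$. To certify $w\notin H$ directly in the ambient $G$ one uses that $\Hom(G,\Z/q)$ is unconstrained: modulo $q$ every special-edge relation reads $q\cdot(\text{target})\equiv0$, which is automatic, and ordinary-edge relations are commutators, so every function $\calV\to\Z/q$ extends to a homomorphism $G\to\Z/q$; one then picks such a homomorphism vanishing on the chosen generators of $H$ but not on $w$. (For the configuration whose RAAG is a direct product $\Z_p\times(\Z_p\rtimes\Z_p)$ one could instead quote Proposition~\ref{prop:directproducts}.)

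The main obstacle, I expect, is exactly the construction of the pair $(w,H)$ in these torsion-free cases --- above all for the directed path $z\to x\to y$, and for the configuration $z\to x$ with $x,y$ joined ordinarily and $y,z$ not joined (or joined by $z\to y$). There the twist is $\equiv1\pmod q$, so the naive attempts collapse: taking $H=\langle z,x^k\rangle$ makes $x^q=[z,x]$ lie in $\Phi(H)$, but then every power of $x$ one could use as $w$ already lies in $H$, and the same happens for subgroups built from powers of $x$ and $y$ together. One must instead find an $H$ whose Frattini subgroup still captures the deep power $x^q$ while a $\Z/q$-valued homomorphism can be arranged to separate $w$ from $H$; doing this correctly, and in a form that survives the passage from $\Gamma_0$ to $\Gamma$ --- note that for a non-special induced subdigraph there is in general no retraction $G\twoheadrightarrow G_0$ onto which one could pull the witness back --- is where the real work of the section lies.
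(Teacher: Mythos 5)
Your overall strategy coincides with the paper's: argue the contrapositive, reduce to the seven three-vertex obstructions of \S~\ref{ssec:special}, dispose of the three torsion triangles via Examples~\ref{ex:Mennike}--\ref{ex:triangletor} together with Example~\ref{ex:torsion}, and treat the remaining four configurations by exhibiting a finitely generated subgroup $H$ and an element $w$ with $w^p\in\Phi(H)$ but $w\notin H$. The torsion half of your argument is complete, and your preliminary observations are correct and useful: all defining relators lie in the Frattini subgroup of the free pro-$p$ group on $\calV$, so every vertex survives in $G/\Phi(G)$, and every map $\calV\to\Z/q$ extends to a homomorphism $G\to\Z/q$.

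The gap is the one you yourself flag: in the four torsion-free configurations you never actually produce the pair $(w,H)$, and this is where essentially all of the content of the paper's argument lies. The paper's choice is $t=yz$ (resp.\ $t=zy$ for the line-graphs), $H=\langle x,t\rangle$, with $[x,t]$ computed from the defining relations to be $z^{q}$ or $z^{q(1+q)}$ (resp.\ $z^q$ or $x^{-q}z^{\lambda q}$ with $\lambda\in\Z_p^{\times}$), so that $z^q\in\Phi(H)$; the witness is $w=z^{qp^{-1}}$ --- a power of the third vertex $z$, not of $x$ or $y$ as you guess. Once this $H$ is on the table, your $\Z/q$-certification does close the argument, and in fact more cheaply than the paper does: the homomorphism $\phi\colon G\to\Z/q$ with $\phi(x)=0$, $\phi(z)=1$, $\phi(y)=-1$ kills both generators of $H$, hence all of the closed subgroup $H$, while $\phi\bigl(z^{qp^{-1}}\bigr)=qp^{-1}\neq0$ in $\Z/q$. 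This would replace the normal-form argument in $H=N\rtimes\langle x\rangle$ used for the non-special triangles and, in the line-graph case, the pro-$p$-tree argument of Lemma~\ref{lem:open triangle free subgroup} showing that $\langle y,z\rangle$ is free pro-$p$; it also sidesteps the issue you raise about the lack of a retraction onto the subgroup generated by $\calV'$, since the certificate is defined on all of $G$. But as submitted, the construction of $H$ and $w$ --- the heart of the proof --- is missing, so the proposal does not establish the proposition.
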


So, let $\Gamma=(\calV,\calE)$ be a digraph.
Recall that the seven possible cases of induced subdigraphs with three vertices that prevent $\Gamma$ from being a special digraph are: the three triangle-graphs yielding torsion
\begin{equation}\label{eq:triangle torsion proof}
  \xymatrix@R=1.5pt{x&&y \\ \bullet\ar[ddr] && \bullet\ar@/_1pc/[ll] \\ \\ &\bullet\ar[ruu]& \\ &z&}
  \qquad\qquad
  \xymatrix@R=1.5pt{x&&y \\ \bullet && \bullet\ar@/_1pc/[ll] \\ \\ &\circ\ar[uur]\ar[luu]& \\ &z&}
  \qquad\qquad
  \xymatrix@R=1.5pt{x&&y \\ \bullet && \bullet\ar@/_1pc/[ll] \\ \\&\circ\ar[ruu]\ar@{-}[uul]&\\ &z&}
\end{equation}
(as shown in Examples~\ref{ex:Mennike}--\ref{ex:triangletor}); the two triangle-graphs
\begin{equation}\label{eq:triangle notorsion proof}
  \xymatrix@R=1.5pt{ x && y \\ \bullet && \circ\ar@{-}@/_1pc/[ll] \\ \\
  &\circ\ar[luu]\ar@{-}[ruu]& \\&z&}
  \qquad\qquad\qquad
  \xymatrix@R=1.5pt{ x && y \\ \bullet && \bullet\ar@{-}@/_1pc/[ll] \\ \\ &\circ\ar[uur]\ar[luu]& \\&z&}
\end{equation}
and the two line-graphs
\begin{equation}\label{eq:triangle open proof}
  \xymatrix@R=1.5pt{ z&x&y \\  \circ\ar[r] &\bullet& \circ\ar@{-}[l] }
  \qquad\qquad\qquad
  \xymatrix@R=1.5pt{z&x&y \\  \circ\ar[r] &\bullet\ar[r]& \bullet }
  \end{equation}

  We split the study of non-special digraphs in these three cases.

%%%%%%%%%%%%%%%%%%

\subsection{Triangle subdigraphs yielding non-trivial torsion}
 Suppose that $\Gamma$ has an induced subdigraph $\Gamma'=(\calV',\calE')$ as in \eqref{eq:triangle torsion proof}
Let $H$ be the oriented pro-$p$ RAAG associated to $q$ and to $\Gamma'$.
Then by Examples~\ref{ex:Mennike}--\ref{ex:torsion}, $H$ yields non-trivial torsion.
Now consider the homomorphism $H\to G$ induced by the inclusion $\calV'\hookrightarrow \calV$.
Then in all cases $z$ is a non-trivial element of $H$ --- and hence also a non-trivial element of $G$ --- yielding non-trivial torsion.
We have just proved the following.

\begin{prop}\label{prop:torsion RAAGs}
If $\Gamma$ has an induced subdigraph as in \eqref{eq:triangle torsion proof}, then $G$ is not Frattini-resistant.
\end{prop}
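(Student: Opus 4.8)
The plan is to produce a non-trivial torsion element inside $G$ and then invoke Example~\ref{ex:torsion}. Write $\Gamma'=(\calV',\calE')$ for the induced subdigraph of $\Gamma$ carried by the three vertices $x,y,z$ occurring in \eqref{eq:triangle torsion proof}, and let $H$ be the oriented pro-$p$ RAAG associated to $\Gamma'$ and to $q$. By Examples~\ref{ex:Mennike}--\ref{ex:triangletor} the generator $z$ of $H$ has finite $p$-power order: in the first diagram of \eqref{eq:triangle torsion proof} because $H$ is a finite $p$-group, and in the other two diagrams by the explicit computation carried out in Example~\ref{ex:triangletor}. Thus $z^{p^{k}}=1$ in $H$ for some $k\geq1$.

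Since $\Gamma'$ is an \emph{induced} subdigraph, an edge between two vertices of $\calV'$ is special, resp.\ ordinary, in $\Gamma'$ precisely when it is so in $\Gamma$; hence every defining relator of $H$ is also a defining relator of $G$, and the inclusion $\calV'\hookrightarrow\calV$ induces a homomorphism $\varphi\colon H\to G$ sending the generator $z$ of $H$ to the vertex $z$ of $G$. Consequently $z^{p^{k}}=\varphi\big(z^{p^{k}}\big)=1$ in $G$, so $z\in G$ has finite order. It remains to check that $z\neq1$ in $G$, so that $z$ genuinely yields non-trivial torsion there. This cannot be deduced from $\varphi$ --- $\Gamma$ is not a special digraph, so the subdigraph-monomorphism statement \cite[Prop.~4.11]{BQW} is unavailable --- and is instead seen as follows: present $G=F/R$, with $F$ the free pro-$p$ group on $\calV$ and $R$ the closed normal closure of the defining relators $wuw^{-1}u^{-(1+q)}$ (for special edges) and $wuw^{-1}u^{-1}$ (for ordinary edges). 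Each such relator is the product of a commutator and a $p$-th power, so it lies in $\Phi(F)=F^{p}F'$; since $\Phi(F)$ is normal in $F$ we get $R\subseteq\Phi(F)$, whence
\[
 G/\Phi(G) = (F/R)/(\Phi(F)/R) \cong F/\Phi(F) \cong (\Z/p)^{|\calV|}.
\]
Therefore the image of the vertex $z$ in $G/\Phi(G)$ is a non-zero basis element, and in particular $z\neq1$ in $G$.

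Combining the two paragraphs, $z$ is a non-trivial torsion element of $G$, and Example~\ref{ex:torsion} then shows immediately that $G$ is not Frattini-resistant (in fact, not even Frattini-injective). The only step that requires any care is the last verification that $z$ survives in $G$, i.e.\ the Frattini-quotient computation above; the rest is bookkeeping of the three cases against Examples~\ref{ex:Mennike}--\ref{ex:triangletor}.
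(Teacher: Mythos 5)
Your proof is correct and follows the same route as the paper: torsion for the vertex $z$ in the oriented pro-$p$ RAAG of the induced subdigraph via Examples~\ref{ex:Mennike}--\ref{ex:triangletor}, pushed into $G$ along the homomorphism induced by $\calV'\hookrightarrow\calV$, then Example~\ref{ex:torsion}. Your Frattini-quotient verification that $z\neq1$ in $G$ supplies a step the paper leaves implicit (non-triviality in $H$ does not by itself transfer along $H\to G$), and is exactly the right way to close that gap.
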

%%%%%%%%%%%%%%%%%%%%%%%%%%%%%%%%%%%
%%%%%%%%%%%%%%%%%%%%%%%%%%%%%%%%%%%

%%%%%%%%%%%%%%%%%%

\subsection{Non-special triangle digraphs}
Now suppose that $\Gamma$ has an induced subdigraph $\Gamma'=(\calV',\calE')$ as in \eqref{eq:triangle notorsion proof}.
If $G$ has non-trivial torsion, then clearly it is not Frattini-resistant by Example~\ref{ex:torsion}.

Therefore, we may suppose that $G$ is torsion-free.
Then the subgroup of $G$ generated by $x,y$ is a 2-generated free abelian pro-$p$ group --- i.e., it is isomorphic to $\Z_p^2$ ---, and the subgroup generated by $z$ is a 1-generated free abelian pro-$p$ group --- i.e., it is isomorphic to $\Z_p$ --- and it is normalized by $x$ and y.
Altogether, the subgroup of $G$ generated by $\calV'$ is $\langle z\rangle\rtimes\langle x,y\rangle\simeq\Z_p\rtimes\Z_p^2$, and therefore every element $g$ lying in it may be written in a unique way as
\begin{equation}\label{eq:elements triangle}
 g=x^{\lambda_1}y^{\lambda_2}z^{\lambda_3}\quad \text{for some }\lambda_1,\lambda_2,\lambda_3\in\Z_p.\end{equation}
Now set $t=yz$.
Then
\begin{equation}\label{eq:xt triangle}
 [x,t]=xyzy^{-1}x^{-1}z^{-1}= \begin{cases}
z^q & \text{in the first case} \\ z^{q(1+q)} & \text{in the second case}.
 \end{cases}
\end{equation}
Let $H$ be the subgroup of $G$ generated by $x$ and $t$, and let $N$ be the subgroup generated by $t$ and $z^q$.
Then $N$ is isomorphic to $\Z_p^2$ in the first case, and to $\Z_p\rtimes\Z_p$ in the second case, and moreover by \eqref{eq:xt triangle} it is a normal subgroup of $H$.
In particular, one has $H=N\rtimes \langle x\rangle$.
Thus, every element $g$ of $H$ may be written as
\begin{equation}\label{eq:elements triangle2}
 g=x^{\lambda}t^{\mu_1}z^{q\mu_2}=x^\lambda(yz)^{\mu_1}z^{q\mu_2}
 \quad \text{for some }\lambda,\mu_1,\mu_2\in\Z_p.\end{equation}
By \eqref{eq:xt triangle}, one has that $z^{q(1+q)}$, and hence also $z^q$ --- recall that $1+q$ is invertible in $\Z_p$ ---, belongs to $\Phi(H)$.
By \eqref{eq:elements triangle}, $z^{qp^{-1}}$ may not be written as in \eqref{eq:elements triangle2}, and hence it does not belong to $H$.
Therefore, $G$ is not Frattini-resistant.

We have just proved the following.

\begin{prop}\label{prop:nonspecial triangles}
If $\Gamma$ has an induced subdigraph as in \eqref{eq:triangle notorsion proof}, then $G$ is not Frattini-resistant.
\end{prop}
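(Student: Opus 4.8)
The plan is to reduce to the criterion of Proposition~\ref{prop:fratres commres}(ii) by producing an explicit finitely generated subgroup $H\leq G$ and an element $g\in G$ with $g^p\in\Phi(H)$ but $g\notin H$. First I would dispose of the torsion case: if $G$ has non-trivial torsion it is not Frattini-resistant by Example~\ref{ex:torsion}, so from now on $G$ may be assumed torsion-free.

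The second step is to pin down the subgroup generated by the three vertices $x,y,z$ of the induced subdigraph $\Gamma'$. Reading off the defining relations, $x$ and $y$ commute, $x$ acts on $z$ by $z\mapsto z^{1+q}$, and $y$ either centralises $z$ (first diagram) or also acts by $z\mapsto z^{1+q}$ (second diagram). Combining this with the torsion-freeness of $G$ and with the fact that the oriented pro-$p$ RAAG on an induced subdigraph embeds into $G$, one identifies $\langle x,y,z\rangle$ with $\langle z\rangle\rtimes\langle x,y\rangle\cong\Z_p\rtimes\Z_p^2$. The point of this identification is that it provides a unique normal form $x^{\lambda_1}y^{\lambda_2}z^{\lambda_3}$ with $\lambda_i\in\Z_p$ for every element of $\langle x,y,z\rangle$, which is what makes the forthcoming non-membership claim checkable.

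Next, set $t=yz$ and compute $[x,t]$: using $[x,y]=1$ and the action of $x$ (and, in the second case, also of $y$) on $z$, one gets $[x,t]=z^q$ in the first case and $[x,t]=z^{q(1+q)}$ in the second, so in either case $[x,t]$ is a $\Z_p^\times$-power of $z^q$. Put $H=\langle x,t\rangle$ and $N=\langle t,z^q\rangle$. The same computation shows that $z^q$ is normalised both by $x$, since $xz^qx^{-1}=(z^q)^{1+q}$, and by $t$; hence $N\trianglelefteq H$ and $H=N\rtimes\langle x\rangle$, so every element of $H$ has the form $x^\lambda t^{\mu_1}z^{q\mu_2}=x^\lambda(yz)^{\mu_1}z^{q\mu_2}$ with $\lambda,\mu_1,\mu_2\in\Z_p$. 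Since $[x,t]\in H'\subseteq\Phi(H)$, we get $z^q\in\Phi(H)$ (in the second case using that $1+q$ is a unit of $\Z_p$ and that $\Phi(H)$ is a closed subgroup).

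It remains to show $z^{q/p}\notin H$. Rewriting $x^\lambda(yz)^{\mu_1}z^{q\mu_2}$ in the normal form $x^ay^bz^c$ of $\langle x,y,z\rangle$ and comparing with $z^{q/p}=x^0y^0z^{q/p}$ forces the $x$- and $y$-exponents to vanish, i.e.\ $\lambda=\mu_1=0$, so that $(yz)^{\mu_1}=1$ and the $z$-exponent becomes $q\mu_2$; then $q\mu_2=q/p$ would force $\mu_2=1/p\notin\Z_p$, a contradiction. Hence $(z^{q/p})^p=z^q\in\Phi(H)$ while $z^{q/p}\notin H$, and Proposition~\ref{prop:fratres commres} shows $G$ is not Frattini-resistant. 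The step I expect to be most delicate is the structural identification of $\langle x,y,z\rangle$ in the second step --- one must use that $\Gamma'$ is genuinely non-special and invoke the embedding of the RAAG on an induced subdigraph with care, and keep in mind that in the second diagram $y$ and $z$ do not commute, so $(yz)^{\mu_1}$ must be put in normal form before exponents are compared; once that normal form is available, the commutator bookkeeping and the final exponent comparison are routine $p$-adic arithmetic.
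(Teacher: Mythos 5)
Your proposal is correct and follows essentially the same route as the paper's own proof: reduce to the torsion-free case, identify $\langle x,y,z\rangle$ with $\Z_p\rtimes\Z_p^2$ and its normal form, set $t=yz$, compute $[x,t]$ to get $z^q\in\Phi(\langle x,t\rangle)$, and rule out $z^{q/p}\in\langle x,t\rangle$ by comparing exponents in the normal form. Your final exponent comparison is in fact spelled out slightly more explicitly than in the paper, but the argument is the same.
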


\begin{rem}\rm
The oriented pro-$p$ RAAGs analyzed in this subsection are $p$-adic analytic (cf., e.g., \cite[\S~3]{ilirslobo:fratres} and Remark~\ref{rem:analytic}).
Hence, one may use also \cite[Thm.~1.2]{ilirslobo:fratres}, to show that such an oriented pro-$p$ RAAG is not Frattini-injective, and thus also neither Frattini-resistant.
\end{rem}
%%%%%%%%%%%%%%%%%%%%%%%%%%%%%%%%%%%
%%%%%%%%%%%%%%%%%%%%%%%%%%%%%%%%%%%

%%%%%%%%%%%%%%%%%%%%%%%%%%%%%%5
\subsection{Line-graphs}
Finally, suppose that $\Gamma$ has an induced subdigraph $\Gamma'=(\calV',\calE')$ as in \eqref{eq:triangle open proof}.
If $G$ has non-trivial torsion, then clearly it is not Frattini-resistant by Example~\ref{ex:torsion}.

Therefore, we may suppose that $G$ is torsion-free.
Let $H$ be the subgroup of $G$ generated by $x,y,z$, let $H_1$, $H_2$, and $V$ be the subgroups of $H$ generated respectively by $x,z$, by $x,y$, and by $y,z$.
 Since we are assuming that $G$ --- and thus also $H$ --- is torsion-free, $H_1$ and $H_2$ are both isomorphic to $\Z_p\rtimes\Z_p$.
 Since
 \[
  yx=\begin{cases} xy &\text{in the 1st case}\\x^{1+q}y &\text{in the 2nd case}\\
     \end{cases}\qquad\text{and}\qquad zx=xz^{(1+q)^{-1}}, \]
every element $g$ of $H$ may be written as $g=x^\lambda u$ for some $u\in V$ and $\lambda\in\Z_p$.

The following lemma is the ``non-special directed analogous'' of Lemma~\ref{lemma:specor RAAG torfree}--(ii), and its proof  follows the strategy of \cite[Lemma~6.4]{BQW}.

\begin{lem}\label{lem:open triangle free subgroup}
 The subgroup $V$ is a 2-generated free pro-$p$ group.
\end{lem}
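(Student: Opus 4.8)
The plan is to treat the two line-graph configurations separately, since they produce different relations. Recall from the computations preceding the statement that in both cases $H$ is generated by $x,y,z$ with ${}^{x}z=z^{1+q}$, together with $[x,y]=1$ in the first case and ${}^{y}x=x^{1+q}$ in the second, and that no relation of $H$ links $y$ and $z$ directly. It suffices to prove that $V$ is free pro-$p$: being topologically generated by $y,z$ and surjecting onto $(\Z/p)^{2}$ (via the homomorphism $H\to(\Z/p)^{2}$ killing $x$ and sending $y,z$ to a basis), it will then have rank exactly $2$.

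In the first case $x$ normalizes $V$, since ${}^{x}y=y$ and ${}^{x}z=z^{1+q}$ both lie in $V$. Let $F$ be the free pro-$p$ group on $\{y,z\}$ and let $\phi\in\Aut(F)$ be $y\mapsto y$, $z\mapsto z^{1+q}$; this is an automorphism because $1+q\in1+p\Z_p$, so that $\phi$ is the identity on $F/\Phi(F)$. As $\lambda\mapsto\phi^{\lambda}$ (with $\phi^{\lambda}(z)=z^{(1+q)^{\lambda}}$) is a continuous homomorphism $\Z_p\to\Aut(F)$, the semidirect product $\widehat H:=F\rtimes\Z_p$, with $\Z_p=\overline{\langle t\rangle}$ acting through $\phi$, is a pro-$p$ group topologically generated by $y,z,t$. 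The relations of $H$ hold in $\widehat H$ under $x\mapsto t$, $y\mapsto y$, $z\mapsto z$, giving a surjection $\pi\colon H\twoheadrightarrow\widehat H$; and the universal property of $F$ gives a homomorphism $\sigma\colon F\to V$ with $\sigma(y)=y$, $\sigma(z)=z$, which is onto since $y,z$ topologically generate $V$. But $\pi\circ\sigma\colon F\to\widehat H$ carries $y,z$ to the free generators of the distinguished copy of $F$ inside $\widehat H$, so $\pi\circ\sigma=\mathrm{id}_{F}$; hence $\sigma$ is injective, thus an isomorphism $F\xrightarrow{\sim}V$, and $V$ is free pro-$p$ of rank $2$.

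In the second case $x$ does not normalize $V$ (one computes ${}^{x}y=x^{-q}y\notin V$), and indeed $H$ has no retraction onto $V$: such a retraction would send $x$ to some $\xi\in V$ with ${}^{\xi}z=z^{1+q}$, so that $\overline{\langle\xi,z\rangle}\le V$ would be a non-abelian torsion-free quotient of $\Z_p\rtimes\Z_p$, necessarily isomorphic to $\Z_p\rtimes\Z_p$, hence not free. Instead I would use the decomposition $H\cong H_1*_{\overline{\langle x\rangle}}H_2$ as an amalgamated free pro-$p$ product over $\overline{\langle x\rangle}\cong\Z_p$, where $H_1=\overline{\langle x,z\rangle}$ and $H_2=\overline{\langle x,y\rangle}$ are the two subgroups already identified with $\Z_p\rtimes\Z_p$; this identification follows by comparing universal properties, and $H_1,H_2$ do embed in $H$, being the oriented pro-$p$ RAAGs of induced subdigraphs (cf. \cite[Prop.~4.11]{BQW}). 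Since $H_1/\overline{\langle z\rangle}\cong\Z_p$ is torsion-free one has $z\notin\overline{\langle x\rangle}$, and likewise $y\notin\overline{\langle x\rangle}$; as the amalgamated subgroup is procyclic and $H_1,H_2$ are $p$-adic analytic, the amalgam is proper and pro-$p$ Bass--Serre theory applies, giving that $V=\overline{\langle y,z\rangle}$ is the free pro-$p$ product $\overline{\langle y\rangle}\amalg\overline{\langle z\rangle}\cong\Z_p\amalg\Z_p$, i.e. free pro-$p$ of rank $2$. (Equivalently: $H$ is the pro-$p$ HNN extension of $H_1$ with stable letter $y$ and associated subgroup $\overline{\langle x^{1+q}\rangle}=\overline{\langle x\rangle}$, and since $\overline{\langle z\rangle}\cap\overline{\langle x\rangle}=1$ a pro-$p$ Britton-type argument shows no non-trivial word in $y$ and $z$ is trivial in $H$.)

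The main obstacle is the second case. There $V$ is neither normal nor a retract of $H$, so some input from pro-$p$ Bass--Serre theory (or a direct normal-form analysis in $H$) appears unavoidable; and one must be careful, since in the pro-$p$ setting amalgamated products need not be proper and the subgroup structure theorem is far less automatic than classically. Here it is precisely the procyclicity of $\overline{\langle x\rangle}$ together with the $p$-adic analyticity of $H_1,H_2$ that makes everything work, exactly as in the corresponding step of \cite[Lemma~6.4]{BQW}.
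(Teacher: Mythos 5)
Your overall strategy is workable, and your treatment of the first case is genuinely different from the paper's: the paper does not split into cases at all, but runs a single pro-$p$ Bass--Serre argument covering both line-digraphs. Namely, $H$ is viewed as the amalgam $H_1\ast_X H_2$ with $X=\langle x\rangle$, it acts on the associated pro-$p$ tree with vertices $gH_1,gH_2$ and edges $gX$, one checks $V\cap {}^gX=\{1\}$ for all $g\in H$, and Mel'nikov's subgroup theorem for groups acting on pro-$p$ trees with trivial edge stabilizers exhibits $V$ as a free pro-$p$ product of vertex stabilizers (each contained in some $\langle {}^gz\rangle$ or $\langle {}^gy\rangle$, hence trivial or $\simeq\Z_p$) and a free pro-$p$ group, so $V$ is free; $2$-generation is then immediate. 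Your semidirect-product model $\widehat H=F\rtimes\Z_p$ and the retraction $\pi\circ\sigma=\id_F$ give a cleaner, tree-free argument for the first case. Note only that the existence of $\pi\colon H\to\widehat H$ presupposes that $x,y,z$ satisfy in $G$ no relations beyond those of the presentation of the oriented pro-$p$ RAAG of the line-digraph; this is the same structural input (properness of the decomposition of $H$) that the paper also uses without further comment, so it is a shared debt rather than a new one.

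The second case is where your write-up stops short of a proof. You invoke ``pro-$p$ Bass--Serre theory'' to conclude $V=\overline{\langle y\rangle}\amalg\overline{\langle z\rangle}$, but the step that actually yields this --- and which is the entire content of the paper's proof --- is the verification that $V\cap{}^gX=\{1\}$ for \emph{every} $g\in H$, i.e.\ that $V$ acts on the tree with trivial edge stabilizers; only then does Mel'nikov's theorem apply, and only after computing $\mathrm{Stab}_V(gH_1)=V\cap\langle{}^gz\rangle$ and $\mathrm{Stab}_V(gH_2)=V\cap\langle{}^gy\rangle$ does one see that every free factor of $V$ is procyclic or free. Checking $y,z\notin\overline{\langle x\rangle}$ and $\overline{\langle z\rangle}\cap\overline{\langle x\rangle}=1$ is not enough: you need the intersection of $V$ with all conjugates of $X$, and this is where the normal form $g=x^\lambda u$ with $u\in V$, established just before the lemma, is used. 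Your properness criterion (procyclic amalgamated subgroup plus $p$-adic analytic factors) is likewise asserted rather than justified, and the Britton-type alternative would need a genuine pro-$p$ normal-form theorem, which is exactly what the tree gives you. With the conjugate computation supplied, your second case becomes the paper's proof --- which, as noted, already handles the first case as well.
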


\begin{proof}
Set $X=\langle x\rangle$.
 Since $G$ is torsion-free, $X\simeq\Z_p$.
 Consider the second-countable pro-$p$ tree $\mathrm{T}=(\calV_{\mathrm{T}},\calE_{\mathrm{T}})$ where
 \[
  \calV_{\mathrm{T}}=\left\{\:gH_1,\:gH_2\:\mid\:g\in H\:\right\},\qquad\calE_{\mathrm{T}}=\left\{\:gX\:\mid\:g\in H\:\right\}.
 \]
Then $H$ acts naturally on $\mathrm{T}$ by $h.(gH_i)=(hg)H_i$, with $i=1,2$, and $h.(gX)=(hg)X$ for every $h\in H$.
The stabilizers in $V$ of an edge $gX$ and of a vertex $gH_i$ are
\[
  \mathrm{Stab}_V(gX)=V\cap {}^gX,\qquad\text{and}\qquad\mathrm{Stab}_V(gH_i)=V\cap{}^gH_i,\qquad i=1,2.
\]
It is straightforward to see that $V\cap {}^gX=\{1\}$, and thus the stabilizer in $V$ of any edge of $\mathrm{T}$ is trivial.
Also, this implies that
$$V\cap{}^gH_1=V\cap\langle{}^gz\rangle\qquad\text{and}\qquad  V\cap{}^gH_2=V\cap\langle{}^gy\rangle,$$
and thus these intersections are either trivial or isomorphic to $\Z_p$.
Since $\mathrm{Stab}_V(gX)=\{1\}$ for any $g\in H$, by \cite[Thm.~5.6]{melnikov} $H$ is isomorphic to the free pro-$p$ product of some stabilizers $\mathrm{Stab}_V(gH_i)$, $i\in\{1,2\}$, and of a free pro-$p$ group.
Hence, altogether $H$ is isomorphic to the free pro-$p$ product of free pro-$p$ groups --- in fact, it cannot be but the free pro-$p$ product of $\mathrm{Stab}_V(H_1)=\langle z\rangle$ and $\mathrm{Stab}_V(H_2)=\langle y\rangle$ ---, and thus it is a free pro-$p$ group.
\end{proof}

Now set $t=zy$.
Then
\begin{equation}\label{eq:xt triangle 2}
 [x,t]= \begin{cases}
[x,z]=z^q & \text{in the first case} \\
xzx^{-1-q}z^{-1}=x^{-q}z^{(1+q)^{1+q}-1} & \text{in the second case}.
 \end{cases}
\end{equation}
Observe that $(1+q)^{1+q}-1$ is an element of $\Z_p$ which is divisible by $q$ but not by $qp$, as
\[\begin{split}
 (1+q)^{1+q} &= 1+\binom{1+q}{1}q+\binom{1+q}{2}q^2+\ldots\\
 &=1+(1+q)q+\frac{q(1+q)}{2}q^2+\ldots
  \end{split}\]
Hence, in the second case $[x,t]=x^{-q}z^{\lambda q}$, where $\lambda$ is an invertible element of $\Z_p$.
Let $K$ be the subgroup of $H$ generated by $x,t$.
By \eqref{eq:xt triangle 2}, $z^q$ belongs to $\Phi(K)$, as $[x,t],x^q\in\Phi(K)$.

We claim that $z^{qp^{-1}}\notin K$.
Let $W$ be the subgroup of $K$ generated by $t,z^q$.
Since $V$ is a free pro-$p$ group and $W\subseteq V$, also $W$ is a free pro-$p$ group, namely, the free pro-$p$ group generated by $t$ and $z^p$.
It is easy to see that every element $g$ of $K$ may be written as
$$g=x^{\lambda}u\qquad\text{for some }u\in W,\:\lambda\in\Z_p,$$ as done for the elements of $H$.
Now, if $z^{qp^{-1}}\in K$, then $z^{qp^{-1}}=x^{\lambda}u$ with $\lambda$ and $u$ as above.
Since $u,z^{qp^{-1}}\in V$, this implies that $x^\lambda\in V$, so that $\lambda=0$, and $z^{qp^{-1}}=u\in W$.
Consequently, $z^q=u^p$, which is impossible as $W$ is the free pro-$p$ group generated by $t$ and $z^p$.

This proves the following.

\begin{prop}\label{prop:nonspecial lambda}
If $\Gamma$ has an induced subdigraph as in \eqref{eq:triangle open proof}, then $G$ is not Frattini-resistant.
\end{prop}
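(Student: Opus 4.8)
The plan is to work inside the subgroup $H=\langle x,y,z\rangle$ of $G$ generated by the three vertices of the given induced subdigraph, and to produce there a topologically finitely generated subgroup $K$ and an element of $G$ whose $p$-th power lies in $\Phi(K)$ but which itself does not lie in $K$; by Proposition~\ref{prop:fratres commres} this will show that $G$ is not Frattini-resistant. If $G$ has non-trivial torsion the conclusion is immediate from Example~\ref{ex:torsion}, so I would first reduce to the case that $G$, and hence $H$, is torsion-free. By \cite[Prop.~4.11]{BQW}, the inclusion of the induced subdigraph into $\Gamma$ identifies $H$ with the oriented pro-$p$ RAAG of \eqref{eq:triangle open proof}, so that $H$ is generated by $x,y,z$ subject to $zx=xz^{(1+q)^{-1}}$ together with $yx=xy$ (first case) or $yx=x^{1+q}y$ (second case). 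These rewriting rules let one collect every power of $x$ on the left, so every element of $H$ has the form $x^{\lambda}u$ with $\lambda\in\Z_p$ and $u$ in $V=\langle y,z\rangle$, the exponent $\lambda$ being unique once one knows $\langle x\rangle\cap V=\{1\}$.

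The structural input is Lemma~\ref{lem:open triangle free subgroup}: $V$ is a free pro-$p$ group of rank $2$, freely generated by $y$ and $z$. Granting it, set $t=zy$ and $K=\langle x,t\rangle$. A short computation with the rewriting rules above gives $[x,t]=z^{q}$ in the first case, and $[x,t]=x^{-q}z^{\lambda q}$ with $\lambda\in\Z_p^{\times}$ in the second — the unit $\lambda$ arising because $(1+q)^{1+q}$ is congruent to $1$ modulo $q$ but not modulo $qp$. In either case $z^{q}\in\Phi(K)$: directly in the first case, and in the second because $x^{q}$ and $[x,t]$ both lie in $\Phi(K)=K^{p}K'$, whence $z^{\lambda q}\in\Phi(K)$ and so $z^{q}\in\Phi(K)$ as $\lambda$ is invertible in $\Z_p$.

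It then remains to check that $z^{q/p}\notin K$. For this I would use that, just as for $H$, every element of $K$ is of the form $x^{\lambda}u$ with $u$ in the subgroup $W=\langle t,z^{q}\rangle$ of $V$. Being a closed subgroup of the free pro-$p$ group $V$, $W$ is free pro-$p$; it is non-abelian, since in a free pro-$p$ group the centralizer of the non-trivial power $z^{q}$ is $\langle z\rangle$, which does not contain $t=zy$; hence $W$ is free of rank $2$ and its generators $t,z^{q}$ form a free generating set. Now if $z^{q/p}$ were in $K$, writing $z^{q/p}=x^{\lambda}u$ with $u\in W\subseteq V$ and using $z^{q/p}\in V$ would force $x^{\lambda}\in V$, hence $\lambda=0$ and $z^{q/p}=u\in W$; then $z^{q}=u^{p}\in\Phi(W)$, contradicting the fact that $z^{q}$ belongs to a free generating set of $W$. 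Thus $z^{q/p}$ and $K$ witness the failure of the criterion of Proposition~\ref{prop:fratres commres}, so $G$ is not Frattini-resistant.

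The real obstacle is Lemma~\ref{lem:open triangle free subgroup}, the freeness of $V=\langle y,z\rangle$: although the induced subdigraph has no edge between $y$ and $z$, one must rule out relations between them that are mediated by $x$. The route I would take is pro-$p$ Bass--Serre theory: let $H$ act on the second-countable pro-$p$ tree with vertex set $\{\,gH_{1},gH_{2}\mid g\in H\,\}$, where $H_{1}=\langle x,z\rangle$ and $H_{2}=\langle x,y\rangle$, and edge set $\{\,gX\mid g\in H\,\}$ with $X=\langle x\rangle$; one checks that $V$ acts with trivial edge stabilizers and with vertex stabilizers $V\cap\langle{}^{g}z\rangle$ and $V\cap\langle{}^{g}y\rangle$, each trivial or procyclic, so that Mel'nikov's structure theorem \cite[Thm.~5.6]{melnikov} forces $V$ to be the free pro-$p$ product of (at most) $\langle z\rangle$ and $\langle y\rangle$, hence free of rank $2$. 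Everything else — that the rewriting rules do collect powers of $x$ on the left, that $\langle x\rangle\cap V$ is trivial, and the commutator computation in the second case — is routine, the only thing to watch being that exponents lie in $\Z_p$ rather than in $\Z$.
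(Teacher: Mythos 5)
Your proposal is correct and follows essentially the same route as the paper: reduce to the torsion-free case, prove that $V=\langle y,z\rangle$ is free of rank $2$ via the action on the pro-$p$ tree with vertex stabilizers $H_1=\langle x,z\rangle$, $H_2=\langle x,y\rangle$ and Mel'nikov's theorem, then take $t=zy$, $K=\langle x,t\rangle$, compute $[x,t]$ in the two cases to get $z^q\in\Phi(K)$, and rule out $z^{q/p}\in K$ by the normal form $x^\lambda u$ with $u\in W=\langle t,z^q\rangle$ free on those generators. The only (harmless) difference is that you justify in slightly more detail why $t,z^q$ freely generate $W$, a point the paper asserts more briefly.
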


Altogether, Propositions~\ref{prop:torsion RAAGs}--\ref{prop:nonspecial triangles}--\ref{prop:nonspecial lambda} prove Proposition~\ref{prop:fratres specially}.

%%%%%%%%%%%%%%%%%%%%%%%%%%%%%%%%%%%
%%%%%%%%%%%%%%%%%%%%%%%%%%%%%%%%%%%%%%%%%%%%%%%%%%%%%%%%%%%%%%%%%%%%%
%%%%%%%%%%%%%%%%%%%%%%%%%%%%%%%%%%%
%%%%%%%%%%%%%%%%%%%%%%%%%%%%%%%%%%%%%%%%%%%%%%%%%%%%%%%%%%%%%%%%%%%%%
%%%%%%%%%%%%%%%%%%%%%%%%%%%%%%%%%%%

\section{Digraphs of elementary type and Frattini-resistance}
The goal of this section is to prove the following.

\begin{prop}\label{prop:fratres ET}
Let $\Gamma=(\calV,\calE)$ be a special digraph, and for a $p$-power $q$ let $G$ be the associated oriented pro-$p$ RAAG.
If $G$ is Frattini-resistant, then $\Gamma$ is of elementary type.
\end{prop}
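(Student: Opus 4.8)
The plan is to prove the contrapositive combined with the classification in Proposition~\ref{prop:ET graphs no subgraphs}: assuming $\Gamma$ is special but \emph{not} of elementary type, we must exhibit a finitely generated subgroup of $G$ witnessing a failure of Frattini-resistance via the criterion in Proposition~\ref{prop:fratres commres}(ii). By Proposition~\ref{prop:ET graphs no subgraphs}, a special digraph fails to be of elementary type precisely when it contains one of three forbidden induced subdigraphs: the directed ``cherry'' $\eqref{eq:Lambdas}$, the $4$-cycle with ordinary edges (left of $\eqref{eq:square line}$), or the path on four vertices with possibly-directed end-edges (right of $\eqref{eq:square line}$). So the argument splits into (at most) these three cases, and in each case it suffices to produce the witness inside the oriented pro-$p$ RAAG associated to the induced subdigraph itself, since by \cite[Prop.~4.11]{BQW} (as used in Lemma~\ref{lemma:specor RAAG torfree}) the inclusion of vertex sets induces a monomorphism of the corresponding oriented pro-$p$ RAAGs.

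\textbf{Case of the directed cherry $\eqref{eq:Lambdas}$.} Here the induced subgroup $H$ has presentation $\langle x,y,z\mid [x,z]=z^q,\ [x,y]=y^q\rangle$ with $x$ the special (sinkhole) vertex and $y,z$ disjoint ordinary vertices; note $H=\langle y,z\rangle\rtimes\langle x\rangle$ where $\langle y,z\rangle$ is free pro-$p$ of rank $2$ by Lemma~\ref{lemma:specor RAAG torfree}(ii). The plan is to mimic the ``line-graph'' computation of \S~5.3 almost verbatim: set $t=zy$ (or a similar product), compute $[x,t]$, observe it equals a product of $x$-conjugates of $y$ and $z$ whose ``total $q$-divisibility'' forces $z^q$ (up to an invertible exponent) into $\Phi(K)$ for $K=\langle x,t\rangle$, while $z^{q/p}\notin K$ because $K\cap\langle y,z\rangle$ sits inside the free pro-$p$ group $\langle y,z\rangle$ as a subgroup not containing a $p$-th root of $z^q$. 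The key structural tool is again the pro-$p$ Bass--Serre / pro-$p$ tree argument (\cite[Thm.~5.6]{melnikov}) exactly as in Lemma~\ref{lem:open triangle free subgroup}, applied to the action on the coset tree of $\langle x\rangle$ inside $H$.

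\textbf{Case of the square $\eqref{eq:square line}$ and the $4$-path.} For the $4$-cycle with all ordinary edges, $H$ is the pro-$p$ RAAG of the cycle graph $C_4$ (it is a genuine undigraph since no vertex is special—or if end vertices are special, the sinkhole hypothesis fixes the edge directions), and $H\cong F_2\times F_2$-like? No: the RAAG of $C_4$ is $F_2\times F_2$ only for the complement; in fact $\mathrm{RAAG}(C_4)=F_2\times F_2$ is \emph{false}—rather $C_4$'s RAAG is the one where we need the Droms obstruction. The honest plan is: invoke Proposition~\ref{prop:ET RAAGs}, which says that for a special digraph of non-elementary type, $G$ contains a subgroup isomorphic to the oriented pro-$p$ RAAG of $\eqref{eq:Lambdas}$, or of the $C_4$-undigraph, or of the $4$-path undigraph $\eqref{eq:no ET RAAG}$. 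Thus it suffices to show \emph{these three explicit pro-$p$ groups are not Frattini-resistant}. For $\eqref{eq:Lambdas}$ this is done above. For the two undigraph cases, these are pro-$p$ completions of discrete RAAGs on $C_4$ and on the path $P_4$; by Droms's theorem neither is coherent, and more concretely one finds a finitely generated subgroup (e.g.\ the kernel of the map sending all generators to a generator of $\Z_p$, intersected suitably) that is not itself a RAAG and inside which one exhibits $x^p\in\Phi(H)$ with $x\notin H$. Concretely in $P_4=\langle a,b,c,d\mid [a,b]=[b,c]=[c,d]=1\rangle$, one takes $H=\langle a,c, b^p, d^p\rangle$-type subgroups and computes commutators landing in $\Phi(H)$ whose $p$-th roots escape $H$, paralleling Example~\ref{ex:prop Heisenberg}.

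\textbf{Main obstacle.} The hard part will be the square/path cases: unlike the triangle and cherry cases, here $G$ is torsion-free and the ambient subgroup is a (relatively large) free-by-cyclic or RAAG-like pro-$p$ group with no obvious ``smallest'' analytic quotient to exploit, so producing the explicit finitely generated subgroup $H$ and the element $x$ with $x^p\in\Phi(H)\setminus\{x^p : \text{obvious}\}$ but $x\notin H$ requires a careful normal-form analysis. The cleanest route is probably to reduce uniformly to the free pro-$p$ product / pro-$p$ tree machinery of \cite{melnikov} as in Lemma~\ref{lem:open triangle free subgroup}: realize the relevant subgroup as acting on a pro-$p$ tree with free-pro-$p$ vertex and edge stabilizers, deduce it is free pro-$p$, and then inside a free pro-$p$ group $F$ find $H$ (say $\langle uv, u^p\rangle$ for suitable words $u,v$) with $u^p\in\Phi(H)$ but $u\notin H$—impossible-$p$-th-root arguments in free pro-$p$ groups being elementary. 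Assembling the three cases then yields Proposition~\ref{prop:fratres ET}, and together with Proposition~\ref{prop:fratres specially} this gives implication (ii)$\Rightarrow$(i) of Theorem~\ref{thm:main intro}.
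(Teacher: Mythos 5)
Your overall architecture --- reduce via Proposition~\ref{prop:ET RAAGs} to the three explicit subgroups (the oriented pro-$p$ RAAGs of the digraph \eqref{eq:Lambdas} and of the square and path undigraphs of \eqref{eq:no ET RAAG}) and refute Frattini-resistance in each via Proposition~\ref{prop:fratres commres}(ii) --- is exactly the paper's. But each of your three case analyses has a genuine problem. Most seriously, in the square case you explicitly deny that the pro-$p$ RAAG of the $4$-cycle decomposes as a direct product of two $2$-generated free pro-$p$ groups; it does (the $4$-cycle is the join of the two non-adjacent pairs $\{x,z\}$ and $\{y,w\}$, so $H=\langle x,z\rangle\times\langle y,w\rangle$ with both factors free of rank $2$ by Lemma~\ref{lemma:specor RAAG torfree}(ii)), and this is precisely what makes the case immediate via Proposition~\ref{prop:directproducts}: neither factor is free abelian, so $H$ is not Frattini-resistant. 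Your fallback through Droms's coherence theorem does not yield non-Frattini-resistance, and your proposed ``uniform reduction to free pro-$p$ groups'' via pro-$p$ trees cannot apply here, since this $H$ contains $\Z_p^2$ and is nowhere near free.

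The other two cases are also not closed as sketched. For the path undigraph $\langle a,b,c,d\mid [a,b]=[b,c]=[c,d]=1\rangle$, the subgroup $\langle a,c,b^p,d^p\rangle$ and the analogy with Example~\ref{ex:prop Heisenberg} go nowhere: the commutators you would compute ($[a,b^p]$, $[c,b^p]$, $[c,d^p]$) are trivial, and there is no central element playing the role of $z$ in the Heisenberg group. The paper instead passes to the HNN-extension $H=\langle y,z,t\mid [y,z]=[[t,z],y]=1\rangle$ with $t=xw$, takes $V=\langle t,yz,y^p,z^p\rangle$, derives $({}^xz\cdot y)^p\in\Phi(V)$ from $[t,z^p]=({}^xz\cdot y)^p(yz)^{-p}$, and excludes ${}^xz\cdot y\in V$ by a normal-form comparison in the abelianizations of the normal closures of $\{y,z\}$ in $H$ and in $V$ --- a genuinely different and more delicate argument than anything in \S~5. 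For the digraph \eqref{eq:Lambdas}, your $K=\langle x,t\rangle$ is too small: with ${}^xt=y^{1+q}z^{-1-q}$ one cannot isolate $z^{q}$ inside $\Phi(K)$ without also adjoining $v=y^{q/p}$ to $K$ (so that the $y$-contribution $v^p$ is visibly in $\Phi(K)$), and when $q=p$ one must first replace $x$ by $x^{\lambda}$ with $(1+p)^{\lambda}=1+p^{2}$, since otherwise $v=y$ and $\langle v,t\rangle$ already contains $z$, destroying the ``no $p$-th root'' conclusion. These are not cosmetic omissions; each of the three cases requires its own tailored witness, and the ones you propose would not go through.
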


Altogether, Proposition~\ref{prop:fratres ET} and Proposition~\ref{prop:fratres specially} yield implication (ii)$\Rightarrow$(i) of Theorem~\ref{thm:main intro}.

So, let $\Gamma=(\calV,\calE)$ be a special digraph.
Recall that if $\Gamma$ is a special digraph, but not a digraph of elementary type, then by Proposition~\ref{prop:ET graphs no subgraphs} $G$ has a subgroup which is isomorphic to an oriented pro-$p$ RAAG associated to the special digraph \eqref{eq:Lambdas}, or to the oriented pro-$p$ RAAG associated to one of the undigraphs as in \eqref{eq:no ET RAAG}.

As done in \S~\ref{sec:nonspecial}, we proceed analyzing the two cases separately.

%%%%%%%%%%%%%%%%%%%%%%%%%%%%%%%%%%%%%%%%%%%%%%%%%%%%%%%%%%%

\subsection{The special line-digraph}

Suppose that $G$ has a subgroup which is isomorphic to the oriented pro-$p$ RAAG associated to the special digraph \eqref{eq:Lambdas} and to $q$.
Such a pro-$p$ group is
\[
 \left\langle \:x,\:y,\:z\:\mid\:{}^xy=y^{1+q},\:{}^xz=z^{1+q}\:\right\rangle=V\rtimes\langle x\rangle,
\]
where $V=\langle\:y,z\:\rangle$, which is a 2-generated pro-$p$ group by Lemma~\ref{lemma:specor RAAG torfree}--(ii), and it is torsion-free.

If $q=p^f$ with $f\geq2$ let $H$ be the pro-$p$ group generated by $x,y,z$.
Otherwise, if $q=p$, let $H$ be the subgroup of $G$ generated by $y,z$ and by $x^\lambda$ where $\lambda\in\Z_p$ is such that $(1+p)^\lambda=1+p^2$ (such $\lambda$ exists because $1+p$ generates $1+p\Z_p$, and $p$ divides $\lambda$).
We write $\tilde x=x$ and $q'=q$ if $f\geq2$, and $\tilde x=x^\lambda$ and $q'=p^2$ if $q=p$.
Altogether, $H=V\rtimes\langle \tilde x\rangle$, and ${}^xy=y^{1+q'}$, ${}^xz=z^{1+q'}$.

Now set $t=yz^{-1}$ and $v= y^{q'p^{-1}}$, and let $K$ be the subgroup of $H$ generated by $\tilde x,t,v$.
Since
$${}^{\tilde x}t=y^{1+q'}z^{-1-q'}=v^ptz^{-q'},$$
also $z^{q'}\in K$, and moreover
\[
 \left[\tilde x,t\right]=v^ptz^{-q'}t^{-1}=v^p\left[t,z^{-q'}\right]z^{-q'},
\]
so that $z^{q'}$ belongs to $\Phi(K)$, too.

We claim that $z^{q'p^{-1}}\notin K$.
To show this, first we prove the following.

\begin{claim}
The subgroup $W$ of $H$ generated by $v,t,z^{q'}$ is the 3-generated free pro-$p$ group on these three elements.
\end{claim}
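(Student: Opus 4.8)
The plan is to realize $W$ inside the free pro-$p$ group $V = \langle y, z\rangle$ and then show that the three elements $v = y^{q'p^{-1}}$, $t = yz^{-1}$, and $z^{q'}$ form a free generating set of the subgroup they generate. Since $V$ is a $2$-generated free pro-$p$ group by Lemma~\ref{lemma:specor RAAG torfree}--(ii), and $v, t, z^{q'} \in V$, the subgroup $W$ is automatically a free pro-$p$ group by the pro-$p$ analogue of the Nielsen--Schreier theorem. What remains is to verify that $W$ has rank exactly $3$ and that $v, t, z^{q'}$ is a minimal generating system; equivalently, that their images in $W/\Phi(W)$ are $\Z/p$-linearly independent, and that $W$ is not generated by fewer than three elements.

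The cleanest route is via the Frattini quotient of $V$, or more precisely via a carefully chosen finite quotient. First I would pass to the free pro-$p$ group $V$ on $y, z$ and consider the quotient map $\pi\colon V \to V/\Phi(V)^{p} V^{(3)}$ (or some suitably truncated nilpotent quotient, e.g. $V/\gamma_3(V)V^{p^2}$) in which one can explicitly compute the images of $v$, $t$, $z^{q'}$. In such a quotient, $z^{q'}$ has image involving $z$ to a power divisible by $q' = p^f$ (or $p^2$), $v$ has image $y$ to a power divisible by $q'/p = p^{f-1}$ (resp.\ $p$), and $t \equiv yz^{-1}$. The key point is that $t$ contributes a generator in the ``$y$ minus $z$'' direction that is not a $p$-th power modulo the Frattini subgroup, while $v$ and $z^{q'}$ each contribute $p$-th powers in the $y$ and $z$ directions respectively; hence no $\Z/p$-linear relation among $\bar v, \bar t, \bar z^{q'}$ can hold in $W/\Phi(W)$. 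More concretely: if $\bar v^{a}\bar t^{b}\bar z^{c q'} = 1$ in $W/\Phi(W)$, then lifting to $V$ and examining the exponent sums of $y$ and $z$ modulo $p$ forces $b \equiv 0$ (since only $t$ has exponent sum coprime to $p$ in $V$, because $q'$ is divisible by $p$), and then $a$ and $c$ are each forced to be $0$ mod $p$ by looking one level deeper, at the quotient $V^{p^{f-1}}/V^{p^f}$ or the appropriate lower central term. This shows $\dim_{\Z/p} W/\Phi(W) \geq 3$, and since $W$ is generated by the three given elements, equality holds and $W$ is free of rank $3$ on $v, t, z^{q'}$.

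I expect the main obstacle to be the bookkeeping of exponents: one must choose the auxiliary finite (or nilpotent) quotient of $V$ precisely enough that the divisibility properties of the exponents of $v$ and $z^{q'}$ by powers of $p$ are visible, while still being able to see that $t$ is a genuine free generator and not a proper power times a Frattini element. The subtlety is that $V$ is nonabelian, so ``exponent sum of $y$'' and ``exponent sum of $z$'' are only well-defined modulo the commutator subgroup, and to separate $v$ from $z^{q'}$ one needs to descend to $\Phi(V)/\Phi(\Phi(V))$ or to the quotient by $\gamma_2(V)V^{p^2}$, where the $p$-th powers $y^p$, $z^p$ become visible as independent elements; iterating this $f-1$ times (or once, when $q' = p^2$) handles the divisibility by $q'/p$. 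An alternative, perhaps slicker, approach would be to invoke directly that in a free pro-$p$ group on $\{y,z\}$, the elements $y^{p^{f-1}}$, $yz^{-1}$, $z^{p^{f}}$ generate a free pro-$p$ subgroup of rank $3$ because one can exhibit a retraction or use the structure of $V$ as an iterated free pro-$p$ product along the associated pro-$p$ tree (as in the proof of Lemma~\ref{lem:open triangle free subgroup}); but the Frattini-quotient computation is the most self-contained and is the route I would write up.
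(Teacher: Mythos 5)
Your reduction is sound up to a point: $W$ lies in the free pro-$p$ group $V$, hence is free, so the claim is equivalent to the $\Z/p$-independence of the images of $v,t,z^{q'}$ in $W/\Phi(W)$; and the exponent-sum computation does work for two of the three coefficients. Indeed, passing to $V/V'\cong\Z_p\bar y\oplus\Z_p\bar z$, the image of $\Phi(W)=W^pW'$ is $p$ times the image of $W$, and comparing coordinates forces $p\mid a$ and $p\mid b$ in a relation $v^at^b(z^{q'})^c\in\Phi(W)$. The gap is the coefficient $c$, and it is not a bookkeeping problem but a structural one: in $V/V'$ one has $\overline{z^{q'}}=p\bar v-q'\bar t$, i.e.\ $z^{q'}\equiv v^pt^{-q'}\pmod{V'}$, and both $v^p$ and $t^{q'}=(t^{q'/p})^p$ lie in $W^p\subseteq\Phi(W)$. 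Consequently, in \emph{every} abelian quotient of $V$ --- hence in anything visible to exponent sums, to $p$-adic valuations of exponents, or to subquotients such as $V^{p^{f-1}}/V^{p^f}$ --- the image of $z^{q'}$ already lies in the image of $\Phi(W)$, so $c$ is left completely unconstrained. Going one step down the lower central series does not rescue this: modulo $\gamma_3(V)$ one computes $z^{q'}=v^pt^{-q'}[z,y]^{q'(q'+1)/2}$, while $[v,t]\equiv[z,y]^{q'/p}\pmod{\gamma_3(V)}$, so the correction term is again absorbed into $W'\gamma_3(V)\subseteq\Phi(W)\gamma_3(V)$. Thus the one dependency you must exclude --- which amounts exactly to $z^{q'}\notin\langle v,t\rangle$, the whole content of the claim --- is invisible to the method you commit to writing up.

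What is needed, and what the paper does, is a genuinely non-abelian use of freeness. Since $\{y,t\}$ is also a basis of $V$, one has $\langle v,t\rangle=\langle y^{q'/p},t\rangle$, and every element of this subgroup is a (convergent) product of powers of $t$ and of powers of $y$ with all $y$-exponents in $(q'/p)\Z_p$; comparing this with the expression $z^{q'}=(t^{-1}y)^{q'}$, whose $y$-exponents in the free pro-$p$ group on $\{y,t\}$ are all equal to $1$, and invoking uniqueness of such normal forms, gives $z^{q'}\notin\langle v,t\rangle$, whence $W\neq\langle v,t\rangle$ and $d(W)=3$. Your closing alternative --- a retraction, or the pro-$p$-tree/free-product decomposition of $V$ as in the line-digraph lemma --- is the viable direction and is essentially this argument; but it is not the route you say you would write up, and the Frattini-quotient route cannot close the key case.
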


\begin{proof}[Proof of the claim]
Since $V$ is a 2-generated free pro-$p$ group, $V$ is the free pro-$p$ group generated by $y$ and $t=yz^{-1}$.

\noindent Clearly, the subgroup $\langle v,t\rangle$ is not pro-$p$-cyclic, as $t$ is not a power of $y$, and vice versa.
 Hence, it is a 2-generated free pro-$p$ group.
 Now suppose that $\langle v,t\rangle=W$, so that $z^{q'}\in \langle v,t\rangle$.
 Then
 \[      z^{q'}=t^{\alpha_1}v^{\beta_1}t^{\alpha_2}v^{\beta_2}\cdots
 =t^{\alpha_1}y^{q'p^{-1}\beta_1}t^{\alpha_2}y^{q'p^{-1}\beta_2}\cdots\]
for some $\alpha_i,\beta_i\in\Z_p$.
On the other hand,
\[
 z^{q'}=(t^{-1}y)^{q'}=\underbrace{t^{-1}yt^{-1}y\cdots t^{-1}y}_{q'\text{ times}},
\]
and thus one has two ways --- which are distinct, as $q'p^{-1}\neq1$ --- to write $z^{q'}$ as an element generated by $y$ and $t$.
But this is a contradiction, as $V$ is the free pro-$p$ group generated by $y$ and $t$.
Thus, $z^{q'}\notin \langle v,t\rangle$, and $W$ is the 3-generated free pro-$p$ group generated by $v,t,z^{q'}$. This proves the claim.
\end{proof}

To conclude, suppose for contradiction that $z^{q'p^{-1}}\in K$.
Since $K=W\rtimes\langle\tilde x\rangle$, one may write $z^{q'p^{-1}}=\tilde x^\lambda w$ for some $\lambda\in\Z_p$ and $w\in W$, and therefore
\[ z^{q'}=\left(z^{q'p^{-1}}\right)^p=\left(\tilde x^\lambda w\right)^p=\tilde x^{p\lambda}w'\qquad\text{for some }w'\in W.\]
Since $z^{q'}\in W$, one deduces that $\lambda=0$, and thus also $z^{q'p^{-1}}\in W$.
But then $z^{q'}$ is a $p$-power of an element of $W$, which contradicts the fact that $W$ is the free pro-$p$ group on $v,t,z^{q'}$.
Hence $z^{q'p^{-1}}\notin K$, and we have just proved the following.

\begin{prop}\label{prop:lambdas}
If $\Gamma$ is not of elementary type and $G$ has a subgroup which is an oriented pro-$p$ RAAG associated to the digraph \eqref{eq:Lambdas}, then $G$ is not Frattini-resistant.
\end{prop}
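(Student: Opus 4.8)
The statement merely packages the analysis carried out in this subsection, so the plan is to assemble that analysis and feed it into the criterion of Proposition~\ref{prop:fratres commres}: it suffices to produce an element $g\in G$ and a finitely generated subgroup $K\subseteq G$ with $g^{p}\in\Phi(K)$ but $g\notin K$. By hypothesis $G$ contains a subgroup isomorphic to the oriented pro-$p$ RAAG $V\rtimes\langle x\rangle$ associated to \eqref{eq:Lambdas}, and we work inside it; here $V=\langle y,z\rangle$ is free pro-$p$ of rank $2$ by Lemma~\ref{lemma:specor RAAG torfree}(ii), and this freeness is the engine of the whole argument.

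The construction is the one above. One first removes the case $q=p$: replacing $x$ by a power $\tilde x=x^{\lambda}$ with $(1+p)^{\lambda}=1+p^{2}$, we obtain in every case a subgroup $H=V\rtimes\langle\tilde x\rangle$ with ${}^{\tilde x}y=y^{1+q'}$, ${}^{\tilde x}z=z^{1+q'}$ and $q'$ a power of $p$ with $q'\geq p^{2}$ --- the resulting inequality $q'p^{-1}\neq1$ being exactly what is needed. Put $t=yz^{-1}$, $v=y^{q'p^{-1}}$ and $K=\langle\tilde x,t,v\rangle$. From ${}^{\tilde x}t=v^{p}t\,z^{-q'}$ one reads off $z^{q'}\in K$, and then the identity $[\tilde x,t]=v^{p}[t,z^{-q'}]z^{-q'}$ shows $z^{q'}\in K^{p}K'=\Phi(K)$. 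The crux is the Claim that the subgroup $W=\langle v,t,z^{q'}\rangle$ of the free pro-$p$ group $V$ is free pro-$p$ of rank $3$ on $v,t,z^{q'}$: since $V$ is free on $y$ and $t$, comparing the expansion $z^{q'}=(t^{-1}y)^{q'}$ with a putative word in $t$ and $v=y^{q'p^{-1}}$ produces two distinct reduced expressions for the same element of $V$ --- distinct precisely because $q'p^{-1}$ is not a unit in $\Z_p$ --- contradicting the freeness of $V$ relative to $\{y,t\}$; hence $z^{q'}\notin\langle v,t\rangle$ and $W$ has rank $3$.

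Granting the Claim, $K=W\rtimes\langle\tilde x\rangle$, so every element of $K$ has the shape $\tilde x^{\mu}w$ with $w\in W$. If $z^{q'p^{-1}}=\tilde x^{\mu}w$, raising to the $p$-th power and using $z^{q'}\in W$ forces $\tilde x^{p\mu}\in W$, hence $\mu=0$, hence $z^{q'p^{-1}}\in W$; but then $z^{q'}$ would be a proper $p$-th power in $W$, impossible since $W$ is free on $v,t,z^{q'}$. Thus $g=z^{q'p^{-1}}$ satisfies $g^{p}=z^{q'}\in\Phi(K)$ while $g\notin K$, and Proposition~\ref{prop:fratres commres} yields that $G$ is not Frattini-resistant. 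The one genuine obstacle is the Claim; everything else is normal-form bookkeeping in a semidirect product. Its proof rests on the freeness of $V$ from Lemma~\ref{lemma:specor RAAG torfree}(ii) --- a pro-$p$ Bass--Serre argument in the style of \cite[Lemma~6.4]{BQW} --- together with uniqueness of reduced words in $V$ with respect to the basis $\{y,t\}$, which is where the preliminary reduction of the case $q=p$ earns its keep.
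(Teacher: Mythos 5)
Your argument is correct and coincides with the paper's own proof in every essential respect: the same reduction of the case $q=p$ via $\tilde x=x^{\lambda}$, the same subgroup $K=\langle\tilde x,t,v\rangle$ with $t=yz^{-1}$ and $v=y^{q'p^{-1}}$, the same claim that $W=\langle v,t,z^{q'}\rangle$ is free of rank $3$ proved by comparing the two expressions for $z^{q'}$ in the free group $V=\langle y,t\rangle$, and the same conclusion via Proposition~\ref{prop:fratres commres} applied to $g=z^{q'p^{-1}}$. Nothing further is needed.
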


%%%%%%%%%%%%%%%%%%%%%%%%%%%%%%%%%%%

\subsection{The square-undigraph and line-graphs}

We are left with the two last cases, namely, $G$ contains a subgroup which is the oriented pro-$p$ RAAG associated to one of the two undigraphs \eqref{eq:no ET RAAG}.

Suppose first that $G$ contains a subgroup $H$ which is the oriented pro-$p$ RAAG associated to the square undigraph with vertices $x,y,z,w$ ---
namely,
\[
 \begin{split}
H &=\left\langle\: x,\: y,\:z,\:w\:\mid\: [x,y]=[x,w]=[z,y]=[z,w]=1\:\right\rangle\\
&= \langle\:x,\:z\:\rangle\times\langle\:y,\:w\:\rangle.
 \end{split}\]
Since the two factors are 2-generated free pro-$p$ groups by Lemma~\ref{lemma:specor RAAG torfree}--(ii), none of them is free abelian, and hence $H$ is not Frattini-resistant by Proposition~\ref{prop:directproducts}.

Suppose now that $G$ contains a subgroup which is the oriented pro-$p$ RAAG associated to the line-undigraph with vertices $x,y,z,w$ ---
namely, such a subgroup is
$$\left\langle\: x,y,z,w\:\mid\: [x,y]=[y,z]=[z,w]=1\:\right\rangle$$
--- and let $H$ be the subgroup of $G$ generated by $y,z,{}^xz$ and $t$, where $t=xw$.
Since $tzt^{-1}={}^xz$, $H$ is the HNN-extension of the subgroup $H_1=\langle y,z,{}^xz\rangle$ with $t$, acting as an isomorphism $\langle z\rangle\simeq\langle{}^xz\rangle$ (cf. \cite[Proof of Thm.~3.3]{sz:raags}).
In particular,
\[\begin{split}
    H &= \left\langle\:y,z,{}^zx,t\:\mid\:[y,z]=[y,{}^xz]=1,\:[t,z]={}^xz\cdot z^{-1}\:\right\rangle\\
    &= \left\langle\:y,z,t\:\mid\:[y,z]=[[t,z],y]=1\:\right\rangle.
  \end{split}
\]
Now consider the subgroup $V$ of $H$ (and of $G$) generated by $t,yz,y^p,z^p$.
Then $V$ is the HNN-extension of the subgroup $H_1=\langle y^p,z^p,({}^xz)^p,yz\rangle$ with $t$, acting as an isomorphism $\langle z^p\rangle\simeq\langle({}^xz)^p\rangle$.
One has
\[
  \left[t,z^p\right] = \left({}^xz\right)^p\cdot z^{-p} = \left({}^xz\right)^p\left(y^p\cdot y^{-p}\right)\cdot z^{-p}=
  \left({}^xz\cdot y\right)^p\cdot(y z)^{-p},\]
as $y$ commutes with $z$ and ${}^xz$.
Since $[t,z^p],(yz)^p\in\Phi(V)$, also $({}^xz\cdot y)^p$ lies in $\Phi(V)$.
We claim that ${}^xz\cdot y \notin V$.

Indeed, consider the normal subgroup $N$ of $H$ generated by $y,z$, and set $N_V=N\cap V$: namely, $H/N\simeq V/N_V\simeq\langle t\rangle$.
Then
$$N=\left\langle\:{}^{t^k}y,\:{}^{t^k}z\:\mid\:k\in\Z,\:
\left[{}^{t^k}y,{}^{t^k}z\right]=\left[{}^{t^k}y,{}^{t^{k+1}}z\right]=1\:\right\rangle,$$
and analogously
$$N_V=\left\langle\:{}^{t^k}u\:\mid\:\:k\in\Z,\:
\left[{}^{t^k}u,{}^{t^k}u'\right]=\left[{}^{t^k}y^p,{}^{t^{k+1}}z^p\right]=1,\:u,u'=y^p,z^p,yz\:\right\rangle.$$

The abelianization $N/N'$ is the free abelian pro-$p$ group with basis
\[
 \mathcal{B}_N=\left\{\:{\left[t,_{(k)},y\right]}N',\:{\left[t,_{(k)}z\right]}N'\:\mid\:k\in\Z\ \right\},
\]
where
$$[a,_{(0)}b]=b,\qquad
[a,_{(k)}b]=\underbrace{[a,[a,\ldots[a,[a}_{k\text{ times}},b]]\ldots]]\text{ for }k>0,$$
and $[a,_{(k)}b]=[a^{-1},_{(-k)}b]$ for $k<0$;
and analogously $N_V/N_V'$ is the free abelian pro-$p$ group with basis
\[ \mathcal{B}_{N_V}=\left\{\:{\left[t,_{(k)},u\right]}N_V'\:\mid
 u=y^p,z^p,yz,\:k\in\Z\ \right\}.\]
We underline that $\mathcal{B}_N$ and $\mathcal{B}_{N_V}$ are subsets, of $N/N'$ and $N_V/N_V'$ respectively, converging to 1 --- i.e., any open normal subgroup $U\subseteq H$ contains all but a finite number of elements of $\mathcal{B}_N$ and $\mathcal{B}_{N_V}$ ---, as such a subgroup $U$ contains all commutators of order bigger or equal to $n$, for some $n$ (because $H/U$ is a finite $p$-group, and hence it is nilpotent).

Within the free abelian pro-$p$ group $N/N'$ one has
\begin{equation}\label{eq:yzx mod N}
 {{}^xzy}N'=yN'\cdot zN'\cdot {[t,z]}N',
\end{equation}
and therefore, \eqref{eq:yzx mod N} is the only way to express the coset of ${}^xzyN'$ using the basis $\mathcal{B}_N$.
Now suppose that ${}^xz\cdot y\in V$.
Since ${}^xz\cdot y\in N$, one has ${}^xz\cdot y\in N_V$, and therefore one may write the coset of ${}^xz\cdot y$ in $N_V/N_V'$ as
\begin{equation}\label{eq:xzy mod NV}
{}^xzyN_V'= \prod_{k\in\Z}{[t,_{(k)}y^p]}^{\alpha_k}N_V'\cdot
 {[t,_{(k)}z^p]}^{\beta_k}N_V'\cdot
 {[t,_{(k)}yz]}^{\gamma_k}N_V'
\end{equation}
for some $\alpha_k,\beta_k,\gamma_k\in\Z_p$.
Since $N'\supseteq N_V'$, and since
$$[t,_{(k)}yz]\equiv[t,_{(k)}y][t,_{(k)}z]\qquad\text{and}\qquad[t,_{(k)}u^p]\equiv[t,_{(k)}u]^p$$
modulo $N'$ for every $k\in\Z$ and $u\in N$, from \eqref{eq:xzy mod NV} one obtains the following equality in the free abelian pro-$p$ group $N/N'$:
\[
 {}^xzyN'= y^{p\alpha_0+\gamma_0}N'\cdot z^{p\beta_0+\gamma_0}N'\cdot
 \prod_{k\neq0} {[t,_{(k)}y]}^{p\alpha_k+\gamma_k}N'\cdot
 {[t,_{(k)}z]}^{p\beta_k+\gamma_k}N'.\]
By \eqref{eq:yzx mod N}, one should have
\[
 p\alpha_1+\gamma_1=0\qquad\text{and}\qquad
 p\beta_1+\gamma_1=1,
\]
but then $p(\beta_1-\alpha_1)=1$, a contradiction, as $p\nmid 1$.
Thus, ${}^xz\cdot y\notin V$. This completes the proof of the following.

\begin{prop}\label{prop:fratres square line}
 If $\Gamma$ is not of elementary type and $G$ has a subgroup which is an oriented pro-$p$ RAAG associated to one of the two undigraphs \eqref{eq:no ET RAAG}, then $G$ is not Frattini-resistant.
\end{prop}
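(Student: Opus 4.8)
The plan is to argue as in the case analysis of \S~\ref{sec:nonspecial}: by hypothesis $G$ contains a closed subgroup $H$ isomorphic to the oriented pro-$p$ RAAG attached to one of the two undigraphs in \eqref{eq:no ET RAAG}, and since Frattini-resistance is inherited by closed subgroups (immediately from the definition), it suffices in each case to show that $H$ itself is not Frattini-resistant. I would treat the $4$-cycle on $x,y,z,w$ and the path-undigraph on $x,y,z,w$ (with edges $x$--$y$, $y$--$z$, $z$--$w$) separately.

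For the $4$-cycle the vertices $x$ and $z$ each commute with both $y$ and $w$, so the associated RAAG splits as $H=\langle x,z\rangle\times\langle y,w\rangle$. By Lemma~\ref{lemma:specor RAAG torfree}(ii) the two factors are $2$-generated free pro-$p$ groups, hence non-abelian; in particular neither factor is a free abelian pro-$p$ group, so Proposition~\ref{prop:directproducts} immediately yields that $H$ is not Frattini-resistant. This disposes of the first case with no real computation.

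For the path I would argue directly through the criterion of Proposition~\ref{prop:fratres commres}: I must exhibit a finitely generated subgroup $V\leq H$ together with an element whose $p$-th power lies in $\Phi(V)$ but which itself lies outside $V$. Here $w$ commutes only with $z$ and $x$ only with $y$, so putting $t=xw$ gives ${}^{t}z={}^{x}z$, which exhibits $\langle y,z,{}^{x}z,t\rangle$ as an HNN-extension of $\langle y,z,{}^{x}z\rangle$ along the isomorphism $\langle z\rangle\cong\langle{}^{x}z\rangle$ induced by conjugation by $t$; rewriting gives the presentation $\langle y,z,t\mid [y,z]=[[t,z],y]=1\rangle$. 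Choosing $V=\langle t,\,yz,\,y^{p},\,z^{p}\rangle$, a short computation of $[t,z^{p}]$ inside this structure rewrites it as $({}^{x}z\cdot y)^{p}(yz)^{-p}$, so that $({}^{x}z\cdot y)^{p}\in\Phi(V)$; the remaining and decisive point is that ${}^{x}z\cdot y\notin V$.

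I would obtain this non-membership by linearising. Let $N\trianglelefteq H$ be the normal closure of $\{y,z\}$, so $H/N\cong\Z_p$ is generated by the image of $t$ and $N$ is built from the commuting pairs $({}^{t^{k}}y,{}^{t^{k}}z)$; its abelianization $N/N'$ is a free abelian pro-$p$ group with the explicit topological basis $\{[t,_{(k)}y]N',\,[t,_{(k)}z]N':k\in\Z\}$, a null sequence because any open normal subgroup of $H$ contains all sufficiently high iterated commutators, and $N_V=N\cap V$ carries an analogous basis arising from $y^{p},z^{p},yz$. In $N/N'$ one has the identity ${}^{x}z\,y\,N'=yN'\cdot zN'\cdot[t,z]N'$, and this is the unique way to write that coset in the chosen basis. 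If ${}^{x}z\cdot y$ lay in $V$ it would lie in $N_V$, hence be expressible in the $N_V$-basis; pushing that expression forward into $N/N'$ and comparing the coefficients of $[t,_{(1)}y]$ and $[t,_{(1)}z]$ with the identity above forces $p\alpha_1+\gamma_1=0$ and $p\beta_1+\gamma_1=1$ for suitable $\alpha_1,\beta_1,\gamma_1\in\Z_p$, whence $p(\beta_1-\alpha_1)=1$, which is absurd. Hence ${}^{x}z\cdot y\notin V$, and Proposition~\ref{prop:fratres commres} gives that $G$ is not Frattini-resistant. I expect the main obstacle to be the bookkeeping around the HNN/retract structure of $V$ --- checking that the relevant edge and vertex groups are what they should be, so that the freeness inputs apply and $N/N'$ and $N_V/N_V'$ genuinely carry the two compatible free abelian bases --- after which the closing arithmetic obstruction $p\nmid 1$ is instantaneous.
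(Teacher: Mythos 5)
Your proposal is correct and follows essentially the same route as the paper: the square case via the direct-product decomposition $\langle x,z\rangle\times\langle y,w\rangle$ and Proposition~\ref{prop:directproducts}, and the path case via $t=xw$, the subgroup $V=\langle t,yz,y^p,z^p\rangle$, the identity $[t,z^p]=({}^xz\cdot y)^p(yz)^{-p}$, and the abelianization argument comparing coefficients in $N/N'$ to derive $p(\beta_1-\alpha_1)=1$. The only cosmetic difference is that you route the conclusion explicitly through Proposition~\ref{prop:fratres commres} and the heredity of Frattini-resistance for closed subgroups, which the paper leaves implicit.
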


Altogether, Propositions~\ref{prop:lambdas}--\ref{prop:fratres square line} give Proposition~\ref{prop:fratres ET}.

%%%%%%%%%%%%%%%%%%%%%%%%%%%%%%%%%%%%55
%%%%%%%%%%%55
%%%%%%%%%%%%%%%%%%%%%%%%%%%%%%%%%%%%%%%%%%%%%%%%%%%%%%%%%%%%%%%%%5
%%%%%%%%%%%55
%%%%%%%%%%%%%%%%%%%%%%%%%%%%%%%%%%%

\section{Massey products and Frattini-resistant pro-$p$ groups}

%%%%%%%%%%%%%%%%%%%%%%%%%%%%%%%%%%%

\subsection{Triple Massey products and upper-unitriangular representations}\label{ssec:massey unip}
For $n\geq2$ let $\dbU_n$ denote the group of $n\times n$ upper unitriangular matrices with entries in $\Z/p$, namely,
\[
\dbU_n=\left\{\:\left(\begin{array}{ccccc} 1&a_{1,2}&a_{1,3}&\cdots & \\
&1&a_{2,3}& \\ &&\ddots &\ddots&\vdots \\ &&&1&a_{n-1,n} \\ &&&&1 \end{array}\right)\:\mid\:a_{i,j}\in\Z/p\:\right\}\subseteq\mathrm{GL}_n(\Z/p).\]
The center $\Zen(\dbU_n)$ of $\dbU_n$ consists of those matrices whose only non-0 entry --- besides the main diagonal --- is in the top-right corner, namely,
\[
\Zen(\dbU_n)=\left\{\:\left(\begin{array}{ccccc} 1&0&\cdots &0&b \\
&1&0&&0 \\ &&\ddots &\ddots&\vdots \\ &&&1&0 \\ &&&&1 \end{array}\right)\:\mid\:b\in\Z/p\:\right\}.\]
The group $\dbU_n$ is a finite $p$-group, and thus it is also a pro-$p$ group.

Let $G$ be a pro-$p$ group, and consider $\Z/p$ as a trivial $G$-module, as done in \S~\ref{ssec:1cyc}.
Recall that $\rmH^1(G,\Z/p)$ is the $\Z/p$-vector space of all homomorphisms of pro-$p$ groups $G\to\Z/p$ (cf. Remark~\ref{rem:H1}).
Let $\rho\colon G\to \dbU_n$ be a homomorphism, and for $i=1,\ldots,n-1$, let $\rho_{i,i+1}$ denote the projection of $\rho$ onto the $(i,i+1)$-entry of $\rho$.
Then $\rho_{i,i+1}\colon G\to\Z/p$ is a homomorphism, and thus it is an element of $\rmH^1(G,\Z/p)$.
Analogously, if $\bar\rho\colon G\to \dbU_n$ is a homomorphism, and for $i=1,\ldots,n-1$, $\bar\rho_{i,i+1}$ denotes the projection of $\rho$ onto the $(i,i+1)$-entry of $\rho$, then $\bar\rho_{i,i+1}\colon G\to\Z/p$ is a homomorphism, and thus an element of $\rmH^1(G,\Z/p)$.

Given a sequence $\alpha_1,\ldots,\alpha_n$ of length $n$ of (non-necessarily distinct) elements of $\rmH^1(G,\Z/p)$, the subset of $\rmH^2(G,\Z/p)$ which is the value of the Massey product associated to the sequence $\alpha_1,\ldots,\alpha_n$ is denoted by $\langle\alpha_1,\ldots,\alpha_n\rangle$.
One has the following ``pro-$p$ version'' of the result of W.~Dwyer in \cite{Dwyer}
(cf., e.g., \cite[Lemma~9.3]{eq:kummer}, see also \cite[\S~8]{ido:Massey}).

\begin{prop}\label{prop:massey}
Let $G$ be a pro-$p$ group, and for $n\geq2$ let $\alpha_1,\ldots,\alpha_n$ a sequence of length $n$ of (non-necessarily distinct) elements of $\rmH^1(G,\Z/p)$.
\begin{itemize}
 \item[(i)] The $n$-fold Massey product $\langle\alpha_1,\ldots,\alpha_n\rangle$ is not empty if there exists a homomorphism
 $$\bar\rho\colon G\longrightarrow\dbU_{n+1}/\Zen(\dbU_{n+1})$$
 satisfying $\bar\rho_{i,i+1}=\alpha_i$ for all $i=1,\ldots,n$.
 \item[(ii)] The $n$-fold Massey product $\langle\alpha_1,\ldots,\alpha_n\rangle$ vanishes --- i.e., contains 0 --- if there exists a homomorphism
 $$\rho\colon G\longrightarrow\dbU_{n+1}$$
 satisfying $\rho_{i,i+1}=\alpha_i$ for all $i=1,\ldots,n$.
\end{itemize}
\end{prop}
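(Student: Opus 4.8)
The plan is to deduce both items from the classical dictionary of Dwyer, transported to the pro-$p$ setting: a homomorphism into $\dbU_{n+1}/\Zen(\dbU_{n+1})$ is the same datum as a defining system for $\langle\alpha_1,\dots,\alpha_n\rangle$, while a homomorphism into $\dbU_{n+1}$ provides in addition a cochain exhibiting the value of such a defining system as a coboundary. Throughout one works with the complex of continuous inhomogeneous cochains computing $\rmH^\bullet(G,\Z/p)$; since $\dbU_{n+1}$ is a finite $p$-group and the $\alpha_i$ are continuous, every homomorphism in sight factors through a finite quotient of $G$, so continuity is automatic and the argument is formally identical to the discrete case.

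First I would spell out the translation. Writing the matrix entries of a homomorphism $\rho\colon G\to\dbU_{n+1}$ as continuous maps $a_{ij}\colon G\to\Z/p$ for $1\le i<j\le n+1$ (with $a_{ii}\equiv1$), the identity $\rho(g_1g_2)=\rho(g_1)\rho(g_2)$ becomes, entry by entry,
\[
a_{ij}(g_1g_2)=a_{ij}(g_1)+a_{ij}(g_2)+\sum_{i<k<j}a_{ik}(g_1)\,a_{kj}(g_2),
\]
equivalently $\delta a_{ij}=-\sum_{i<k<j}a_{ik}\cup a_{kj}$ in the inhomogeneous cochain complex. In particular each $a_{i,i+1}$ is a cocycle, hence a homomorphism $G\to\Z/p$, and imposing $\rho_{i,i+1}=\alpha_i$ identifies it with $\alpha_i\in\rmH^1(G,\Z/p)$. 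Because the inputs have degree $1$, all cochains of a defining system of $\langle\alpha_1,\dots,\alpha_n\rangle$ again have degree $1$, and (after the standard sign normalization, which is harmless with $\Z/p$-coefficients) the defining-system relations are precisely the displayed ones restricted to indices $(i,j)\ne(1,n+1)$, together with $a_{i,i+1}=\alpha_i$. Thus the matrix entries of such a $\rho$ are, verbatim, a defining system.

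For (i): a homomorphism $\bar\rho\colon G\to\dbU_{n+1}/\Zen(\dbU_{n+1})$ is the same as a family $\{a_{ij}\}$ indexed by $1\le i<j\le n+1$ with $(i,j)\ne(1,n+1)$ --- the top-right entry having been killed by the centre --- satisfying exactly those relations; requiring $\bar\rho_{i,i+1}=\alpha_i$ makes it a defining system for $\langle\alpha_1,\dots,\alpha_n\rangle$, which is therefore non-empty. For (ii): a homomorphism $\rho\colon G\to\dbU_{n+1}$ supplies in addition the entry $a_{1,n+1}$, and the $(1,n+1)$-instance of the displayed relation reads $\delta a_{1,n+1}=-\sum_{1<k<n+1}a_{1k}\cup a_{k,n+1}$. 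By definition the right-hand side is a cocycle representative of the value of $\langle\alpha_1,\dots,\alpha_n\rangle$ with respect to the defining system $\{a_{ij}\}_{(i,j)\ne(1,n+1)}$ from (i); being a coboundary, it represents $0$ in $\rmH^2(G,\Z/p)$, so $0\in\langle\alpha_1,\dots,\alpha_n\rangle$, i.e. the Massey product vanishes.

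The only points left to verify are routine: that $-\sum_{1<k<n+1}a_{1k}\cup a_{k,n+1}$ is a $2$-cocycle --- a telescoping computation from $\delta(u\cup v)=\delta u\cup v-u\cup\delta v$ on $1$-cochains, associativity of $\cup$, and the relations above --- and the bookkeeping needed to match whichever sign convention for Massey products the cited references adopt; with $\Z/p$-coefficients and degree-$1$ arguments this bookkeeping is trivial, so I do not expect any genuine obstacle.
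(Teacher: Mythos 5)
Your proposal is correct and is precisely the standard Dwyer dictionary between upper-unitriangular representations and defining systems; the paper itself states Proposition 7.1 without proof, citing Dwyer and \cite[Lemma~9.3]{eq:kummer}, where exactly this translation (matrix entries as $1$-cochains, the $(i,j)$-entry relation as the defining-system condition, and the $(1,n+1)$-entry as the cobounding cochain) is carried out. Your handling of continuity (automatic, since $\dbU_{n+1}$ is finite) and of the sign conventions (irrelevant for non-emptiness and for containing $0$) is also as in the cited sources.
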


Hence, there are no essential $n$-fold Massey products in the $\Z/p$-cohomology of $G$ if for any sequence $\alpha_1,\ldots,\alpha_n$ of length $n$ of (non-necessarily distinct) elements of $\rmH^1(G,\Z/p)$, either there are no homomorphisms $\bar\rho\colon G\to \dbU_{n+1}/\Zen(\dbU_{n+1})$ as in statement~(i) of Proposition~\ref{prop:massey}, or there is a homomorphism $\rho\colon G\to\dbU_{n+1}$ as in statement~(ii) of Proposition~\ref{prop:massey}.

\begin{rem}\label{rem:massey Heisenberg}\rm
The group $\dbU_3$ is isomorphic to the Heisenberg group modulo $p$, namely
\begin{equation}\label{eq:Heisenberg}
 \dbU_3=\langle\:A,B,C\:\mid\:[A,B]=C,\:A^p=B^p=[A,C]=[B,C]=I_3\:\rangle,
\end{equation}
where
\[
 A=\left(\begin{array}{ccc} 1&1&0 \\ &1&0 \\ &&1 \end{array}\right),\qquad
 \qquad B=\left(\begin{array}{ccc} 1&0&0 \\ &1&1 \\ &&1 \end{array}\right),\qquad
 \qquad C=\left(\begin{array}{ccc} 1&0&1 \\ &1&0 \\ &&1 \end{array}\right).
\]
Now let $G$ be a pro-$p$ group.
Given a sequence $\alpha_1,\alpha_2,\alpha_3$ of length 3 of elements of $\rmH^1(G,\Z/p)$, non--necessarily distinct, then there exists a homomorphism $\bar\rho\colon G\to\dbU_4/\Zen(\dbU_4)$ as in statement~(i) of Proposition~\ref{prop:massey} if, and only if, there exist two homomorphisms $\tau,\tau'\colon G\to \dbU_3$ satisfying
$$\tau(x)\equiv A^{\alpha_1(x)}B^{\alpha_2(x)}\mod \langle\: C\:\rangle \qquad\text{and}\qquad
\tau'(x)\equiv A^{\alpha_2(x)}B^{\alpha_3(x)}\mod \langle\: C\:\rangle$$
for all $x\in G$.
\end{rem}

%%%%%%%%%%%%%%%%%%%%%%%%%%%%%%5
%%%%%%%%%%%55
%%%%%%%%%%%%%%%%%%%%%%%%%%%%%%%%%%%

\subsection{Triple Massey products and Frattini-resistance}\label{ssec:massey conj}

Let $\Gamma=(\calV,\calE)$ be a digraph, and for a $p$-power $q$ let $G$ be the associated oriented pro-$p$ RAAG.
By \cite[Cor.~1.2--(ii)]{BQW}, if $\Gamma$ is of elementary type --- and hence, if $G$ is Frattini-resistant ---, then there are no essential $n$-fold Massey products in the $\Z/p$-cohomology of $G$ for every $n>2$.
In fact, by \cite[Thm.~1.1]{cq:orMassey}, there are many more digraphs whose associated oriented pro-$p$ RAAGs yield no essential $n$-fold Massey products, than digraphs yielding Frattini-resistant pro-$p$ RAAGs: for example, for every undigraph --- and thus also the undigraphs \eqref{eq:no ET RAAG}, which are not of elementary type --- the associated oriented pro-$p$ RAAGs yield no essential $n$-fold Massey products for any $n>2$ (see also \cite[Thm.~1.1]{BCQ}).

Therefore, within the family of oriented pro-$p$ RAAGs associated to digraph, Frattini-resistance is far more restrictive than the absence of essential $n$-fold Massey products for every $n>2$.
This suggests to formulate Conjecture~\ref{conj:massey}, which --- by Proposition~\ref{prop:massey} and Remark~\ref{rem:massey Heisenberg} --- may be formulated in group-theoretic terms as follows.

\begin{conj}[Reformulation of Conjecture~1.4]
Let $G$ be a Frattini-resistant pro-$p$ group, and let $\alpha_1,\alpha_2,\alpha_3$ be (non-necessarily distinct) homomorphisms $G\to\Z/p$.
If there exist two homomorphisms of pro-$p$ groups $\tau,\tau':G\to\dbU_3$ satisfying
$$\tau(x)\equiv A^{\alpha_1(x)}B^{\alpha_2(x)}\mod \langle\: C\:\rangle
\qquad\text{and} \qquad
\tau'(x)\equiv A^{\alpha_2(x)}B^{\alpha_3(x)}\mod\langle\: C\:\rangle$$
for all $x\in G$, then there exists a homomorphism $\rho\colon G\to \dbU_4$
satisfying $\rho_{i,i+1}=\alpha_i$ for $i=1,2,3$.
\end{conj}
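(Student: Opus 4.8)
We sketch a possible line of attack; the statement being a conjecture, this is only a plan, and we flag at the end the step that looks like the genuine obstruction.

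First one recasts the problem as a lifting problem. By Remark~\ref{rem:massey Heisenberg} the hypothesis amounts to the existence of a homomorphism $\bar\rho\colon G\to\dbU_4/\Zen(\dbU_4)$ with $\bar\rho_{i,i+1}=\alpha_i$ for $i=1,2,3$. The conclusion then says that, after possibly replacing $\bar\rho$ by another homomorphism with the same $(i,i+1)$-entries --- the freedom lying in the $(1,3)$- and $(2,4)$-entries --- the map $\bar\rho$ lifts along the central extension
\[
 1\longrightarrow\Z/p\longrightarrow\dbU_4\longrightarrow\dbU_4/\Zen(\dbU_4)\longrightarrow1 ;
\]
equivalently (Dwyer's theorem \cite{Dwyer}, cf.\ Proposition~\ref{prop:massey}), that $0\in\langle\alpha_1,\alpha_2,\alpha_3\rangle$. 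Writing $e\in\rmH^2(\dbU_4/\Zen(\dbU_4),\Z/p)$ for the class of this extension, the admissible replacements move the obstruction class $\bar\rho^\ast(e)\in\rmH^2(G,\Z/p)$ exactly through the Massey indeterminacy $\alpha_1\cup\rmH^1(G,\Z/p)+\rmH^1(G,\Z/p)\cup\alpha_3$, so one must show that this coset contains $0$.

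The next step is to bring Frattini-resistance into play; recall that $G$ is then torsion-free (Example~\ref{ex:torsion} and the discussion around it). The natural device is a presentation argument: fix a minimal presentation $G\cong F/R$ with $F$ free pro-$p$, lift $\bar\rho$ to some $\tilde\rho\colon F\to\dbU_4$, note that $\tilde\rho(R)\subseteq\Zen(\dbU_4)\cong\Z/p$ so that the induced homomorphism $R/R^{p}[R,F]\to\Z/p$ represents $\bar\rho^\ast(e)$ under the transgression $\rmH^2(G,\Z/p)\cong\rmH^1(R,\Z/p)^{G}$, and then adjust the defining data so that this homomorphism becomes trivial, whence $\tilde\rho$ descends to the sought-for $\rho\colon G\to\dbU_4$. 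Frattini-resistance has two faces one would exploit here: it passes to every closed subgroup, and it is equivalent to the commutator-resistance property of Proposition~\ref{prop:fratres commres}. Concretely, for each relator obstructing the descent of $\tilde\rho$ one would look for a finitely generated subgroup of $G$ --- for instance $\Ker(\alpha_2)$, or a suitable $2$-generated subgroup, which in the elementary-type picture is always a free pro-$p$ group and hence has trivial $\rmH^2$ --- on which the relevant restriction of $\bar\rho^\ast(e)$ vanishes for structural reasons, and then patch these partial trivializations using that in a Frattini-resistant group a $p$-th power of an element lies in $\Phi(H)$, for finitely generated $H$, only when the element already lies in $H$. A heavier alternative would be to extract first a purely cohomological consequence of Frattini-resistance --- say, that $\rmH^\ast(G,\Z/p)$ is quadratic in a suitable local (``Bloch--Kato'') sense --- and then to invoke the vanishing of triple Massey products known in that setting (\cite{eli:Massey,EM:Massey,MT:massey}); for oriented pro-$p$ RAAGs the conjecture already holds, indeed in the stronger form ``no essential $n$-fold Massey products for every $n\geq3$'', by combining Theorem~\ref{thm:main intro} with \cite{BQW} and \cite{cq:orMassey}.

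The genuine obstacle is that there is at present no dictionary relating Frattini-resistance --- a condition on the poset of finitely generated closed subgroups and their Frattini quotients --- to the cup-product, let alone the Massey-product, structure of $\rmH^\ast(G,\Z/p)$. One cannot circumvent this by passing to a finite quotient: every quotient of $G$ through which a homomorphism $G\to\dbU_4$ factors is a finite $p$-group, hence has non-trivial torsion, hence is \emph{never} Frattini-resistant, so the hypothesis has to be used at the level of the infinite pro-$p$ group $G$ together with its subgroup lattice --- exactly where one presently lacks leverage. Nor does the digraph case help directly: a Frattini-resistant pro-$p$ group need not be an oriented pro-$p$ RAAG --- for example, the maximal pro-$p$ Galois group of a degree-$2$ extension of $\Q_p$ containing a root of $1$ of order $p$ is a Demushkin group of rank $4$, which is Frattini-resistant but carries a non-degenerate cup product on $\rmH^1$, unlike any oriented pro-$p$ RAAG --- so the combinatorics behind Theorem~\ref{thm:main intro} does not bootstrap to the general statement. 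It is for these reasons that the assertion is recorded only as a conjecture.
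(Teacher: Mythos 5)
This statement is recorded in the paper as a conjecture, and the paper offers no proof of it; the only thing actually established there is that this group-theoretic formulation is equivalent to Conjecture~\ref{conj:massey}, via the Dwyer-type dictionary of Proposition~\ref{prop:massey} and Remark~\ref{rem:massey Heisenberg}. Your proposal is therefore correctly calibrated: it does not claim a proof, and your opening paragraph reproduces exactly the translation the paper uses (the hypothesis is the existence of $\bar\rho\colon G\to\dbU_4/\Zen(\dbU_4)$ with prescribed near-diagonal entries, the conclusion is the existence of some lift to $\dbU_4$ with the same near-diagonal entries, and the admissible adjustments sweep out the Massey indeterminacy). Your identification of the genuine obstruction --- the absence of any dictionary between Frattini-resistance, a condition on the lattice of finitely generated closed subgroups, and the cup/Massey structure on $\rmH^\ast(G,\Z/p)$ --- matches the reason the paper leaves this open; the observation that one cannot pass to finite quotients because those are never Frattini-resistant, and that rank-$4$ Demushkin groups show Frattini-resistant groups need not be oriented pro-$p$ RAAGs, is sound and consistent with Remark~\ref{rem:MerSca}, which points to the Merkurjev--Scavia result as the only currently known route to a special case.

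There is accordingly no gap to report beyond the one you yourself flag: no step of your outline is wrong, but none of the proposed strategies (presentation/transgression arguments, patching over subgroups such as $\Ker(\alpha_2)$, or extracting a Bloch--Kato-type consequence of Frattini-resistance) is carried out, and the paper does not carry them out either. The one place where you could be slightly more careful is the suggestion that $2$-generated subgroups of a Frattini-resistant group are free pro-$p$ ``in the elementary-type picture'': that holds for oriented pro-$p$ RAAGs of elementary type but is not known (and is false, e.g.\ for $\Z_p^2$ or for rank-$2$ Demushkin subgroups) for a general Frattini-resistant $G$, so any patching argument would have to allow for such subgroups with non-trivial $\rmH^2$.
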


\begin{rem}\label{rem:MerSca}\rm
By Proposition~\ref{prop:1cyc fratres}, a positive answer to Conjecture~\ref{conj:massey} would provide a new proof of the recent result of A.~Merkurjev and F.~Scavia \cite[Thm.~1.3]{MerSca3}, which states that pro-$p$ groups which may complete into a 1-cyclotomic oriented pro-$p$ group yield no essential triple Massey products in $\Z/p$-cohomology.
\end{rem}

In fact, we suspect that Conjecture~\ref{conj:massey} may be true not only for triple Massey products, but also for $n$-fold Massey products for every $n\geq3$... but maybe such a question is too daring --- and a positive answer would provide a positive solution to the {\sl Massey vanishing conjecture} for maximal pro-$p$ Galois groups formulated by J.~Mina\v c and N.D.~T\^an, see \cite[Conj.~1.1]{MT:kernel} ---, so we do not formulate it as a conjecture on its own, but we just whisper it.

{\small \subsection*{Acknowledgments}
The author wishes to thank Ilir~Snopce and Slobodan~Tanushevski, for several inspiring discussions on Frattini-resistant pro-$p$ groups (and in particular, for their talks on this topic the author attended).
The author acknowledges his membership to the national group GNSAGA (Algebraic Structures and Algebraic Geometry) of the National Institute of Advanced Mathematics -- a.k.a. INdAM -- ``F. Severi''.
}

%Last, but not least, the author is very grateful to the referees for the careful work carried with the manuscript and for the extremely useful comments and suggestions.%, and for inviting the author to consider also the case of fields not containing $\sqrt{-1}$ and with maximal pro-2 Galois group of elementary type.

\begin{bibdiv}
\begin{biblist}

\bib{BLMS}{article}{
   author={Benson, D.},
   author={Lemire, N.},
   author={Mina\v c, J.},
   author={Swallow, J.},
   title={Detecting pro-$p$-groups that are not absolute Galois groups},
   journal={J. Reine Angew. Math.},
   volume={613},
   date={2007},
   pages={175--191},
   issn={0075-4102},
}

\bib{BCQ}{article}{
   author={Blumer, S.},
   author={Cassella, A.},
   author={Quadrelli, C.},
   title={Groups of $p$-absolute Galois type that are not absolute Galois
   groups},
   journal={J. Pure Appl. Algebra},
   volume={227},
   date={2023},
   number={4},
   pages={Paper No. 107262},
}

\bib{BQW}{article}{
   author={Blumer, S.},
   author={Quadrelli, C.},
   author={Weigel, Th.S.},
   title={Oriented right-angled Artin pro-$\ell$ groups and maximal pro-$\ell$ Galois groups},
   journal={Int. Math. Res. Not.},
   date={2024},
   volume={2024},
   number={8},
   pages={6790--6819},
}

\bib{CasQua}{article}{
   author={Cassella, A.},
   author={Quadrelli, C.},
   title={Right-angled Artin groups and enhanced Koszul properties},
   journal={J. Group Theory},
   volume={24},
   date={2021},
   number={1},
   pages={17--38},
   issn={1433-5883},
}

\bib{diestel}{book}{
   author={Diestel, R.},
   title={Graph theory},
   series={Graduate Texts in Mathematics},
   volume={173},
   edition={5},
   publisher={Springer, Berlin},
   date={2017},
   pages={xviii+428},
}

\bib{ddsms}{book}{
   author={Dixon, J.D.},
   author={du Sautoy, M.P.F.},
   author={Mann, A.},
   author={Segal, D.},
   title={Analytic pro-$p$ groups},
   series={Cambridge Studies in Advanced Mathematics},
   volume={61},
   edition={2},
   publisher={Cambridge University Press, Cambridge},
   date={1999},
   pages={xviii+368},
   isbn={0-521-65011-9},
}
\bib{droms}{article}{
   author={Droms, C.},
   title={Subgroups of graph groups},
   journal={J. Algebra},
   volume={110},
   date={1987},
   number={2},
   pages={519--522},
}

\bib{Dwyer}{article}{
   author={Dwyer, W.G.},
   title={Homology, Massey products and maps between groups},
   journal={J. Pure Appl. Algebra},
   volume={6},
   date={1975},
   number={2},
   pages={177--190},
   issn={0022-4049},
}

\bib{efrat:small}{article}{
   author={Efrat, I.},
   title={Small maximal pro-$p$ Galois groups},
   journal={Manuscripta Math.},
   volume={95},
   date={1998},
   number={2},
   pages={237--249},
   issn={0025-2611},
}

\bib{ido:Massey}{article}{
   author={Efrat, I.},
   title={The Zassenhaus filtration, Massey products, and representations of
   profinite groups},
   journal={Adv. Math.},
   volume={263},
   date={2014},
   pages={389--411},
}

\bib{EM:Massey}{article}{
   author={Efrat, I.},
   author={Matzri, E.},
   title={Triple Massey products and absolute Galois groups},
   journal={J. Eur. Math. Soc. (JEMS)},
   volume={19},
   date={2017},
   number={12},
   pages={3629--3640},
   issn={1435-9855},
}

\bib{eq:kummer}{article}{
   author={Efrat, I.},
   author={Quadrelli, C.},
   title={The Kummerian property and maximal pro-$p$ Galois groups},
   journal={J. Algebra},
   volume={525},
   date={2019},
   pages={284--310},
   issn={0021-8693},
}

%
%\bib{friedjarden}{book}{
%   author={Fried, M. D.},
%   author={Jarden, M.},
%   title={Field arithmetic},
%   series={Ergebnisse der Mathematik und ihrer Grenzgebiete. 3. Folge. A
%   Series of Modern Surveys in Mathematics},
%   volume={11},
%   edition={4},
%   note={Revised by M.~Jarden},
%   publisher={Springer, Cham},
%   date={2023},
%   pages={xxxi+827},
%   isbn={978-3-031-28019-1},
%   isbn={978-3-031-28020-7},
%}

\bib{eli:Massey}{unpublished}{
   author={Matzri, E.},
   title={Triple Massey products in Galois cohomology},
   date={2014},
   note={Preprint, available at {\tt arXiv:1411.4146}},
}

\bib{melnikov}{article}{
   author={Mel\cprime nikov, O. V.},
   title={Subgroups and the homology of free products of profinite groups},
   language={Russian},
   journal={Izv. Akad. Nauk SSSR Ser. Mat.},
   volume={53},
   date={1989},
   number={1},
   pages={97--120},
   translation={
      journal={Math. USSR-Izv.},
      volume={34},
      date={1990},
      number={1},
      pages={97--119},
      issn={0025-5726},
   },
}

\bib{MerSca3}{unpublished}{
   author={Merkurjev, A.},
   author={Scavia, F.},
   title={On the Massey Vanishing Conjecture and Formal Hilbert 90},
   date={2023},
   note={Preprint, available at {\tt arXiv:2308.13682}},
}

   \bib{MT:kernel}{article}{
   author={Mina\v{c}, J.},
   author={T\^{a}n, N.D.},
   title={The kernel unipotent conjecture and the vanishing of Massey
   products for odd rigid fields},
   journal={Adv. Math.},
   volume={273},
   date={2015},
   pages={242--270},
}

\bib{MT:massey}{article}{
   author={Mina\v{c}, J.},
   author={T\^{a}n, N.D.},
   title={Triple Massey products and Galois theory},
   journal={J. Eur. Math. Soc. (JEMS)},
   volume={19},
   date={2017},
   number={1},
   pages={255--284},
   issn={1435-9855},
}

%\bib{nsw:cohn}{book}{
%   author={Neukirch, J.},
%   author={Schmidt, A.},
%   author={Wingberg, K.},
%   title={Cohomology of number fields},
%   series={Grundlehren der Mathematischen Wissenschaften},
%   volume={323},
%   edition={2},
%   publisher={Springer-Verlag, Berlin},
%   date={2008},
%   pages={xvi+825},
%   isbn={978-3-540-37888-4},}

\bib{cq:noGal}{article}{
     author={Quadrelli, C.},
   title={Two families of pro-$p$ groups that are not absolute Galois
   groups},
   journal={J. Group Theory},
   volume={25},
   date={2022},
   number={1},
   pages={25--62},
   issn={1433-5883},
}

\bib{cq:chase}{article}{
   author={Quadrelli, C.},
   title={Chasing maximal pro-$p$ Galois groups via 1-cyclotomicity},
   journal={Mediterranean J. Math.},
   volume={21},
   date={2024},
   pages={Paper No. 56},
}

\bib{cq:orMassey}{article}{
   author={Quadrelli, C.},
   title={Digraphs, pro-$p$ groups, and Massey products in Galois cohomology},
   date={2024},
   journal={Rocky Mountain J. Math.},
   date={2024},
   note={To appear, available at {\tt arXiv:2403.01464}},
}

\bib{qsv:quadratic}{article}{
				author={Quadrelli, C.},
				author={Snopce, I.},
				author={Vannacci, M.},
				title={On pro-$p$ groups with quadratic cohomology},
				date={2022},
				journal={J. Algebra},
				volume={612},
				pages={636--690},
			}

\bib{qw:cyc}{article}{
   author={Quadrelli, C.},
   author={Weigel, Th.S.},
   title={Profinite groups with a cyclotomic $p$-orientation},
   date={2020},
   volume={25},
   journal={Doc. Math.},
   pages={1881--1916}
   }

\bib{serre:galc}{book}{
   author={Serre, J-P.},
   title={Galois cohomology},
   series={Springer Monographs in Mathematics},
   edition={Corrected reprint of the 1997 English edition},
   note={Translated from the French by Patrick Ion and revised by the
   author},
   publisher={Springer-Verlag, Berlin},
   date={2002},
   pages={x+210},
   isbn={3-540-42192-0},}

\bib{ilirslobo:fratres}{unpublished}{
   author={Snopce, I.},
   author={Tanushevski, S.},
   title={Frattini-injectivity and maximal pro-$p$ Galois groups},
   date={2020},
   note={Preprint, available at {\tt arXiv:2009.09297}},
}

\bib{ilirslobo:products}{article}{
   author={Snopce, I.},
   author={Tanushevski, S.},
   title={Frattini-resistant direct products of pro-$p$ groups},
   journal={J. Algebra},
   volume={622},
   date={2023},
   pages={351--365},
   issn={0021-8693},
}

\bib{sz:raags}{article}{
	author={Snopce, I.},
	author={Zalesski\u{\i}, P.},
	title={Right-angled Artin pro-$p$ groups},
	date={2022},
	journal={Bull. Lond. Math. Soc.},
	volume={54},
	pages={1904-1922},
	number={5},
}

\bib{wurfel}{article}{
   author={W\"{u}rfel, T.},
   title={On a class of pro-$p$ groups occurring in Galois theory},
   journal={J. Pure Appl. Algebra},
   volume={36},
   date={1985},
   number={1},
   pages={95--103},
}

\end{biblist}
\end{bibdiv}
\end{document}